\documentclass[a4paper,11pt]{article}

\usepackage{amsmath}
\usepackage{amssymb}
\usepackage{amsthm}
\usepackage{amsfonts}
\usepackage{mathtools}
\usepackage{graphicx}
\usepackage{tikz,pgfplots}
\usepackage[utf8]{inputenc}
\usepackage{pgfplots}
\usepackage{tikz}
\pgfplotsset{compat=1.9}
\usepackage{subcaption}
\usepackage{caption}

\usepackage{titlesec}
\usepackage{enumitem}
\usepackage{mathtools}
\usepackage{listings}
\usepackage{geometry}
\usepackage{bbm}
\usepackage{xcolor}
\usepackage{hyperref}
\usepackage{caption}
\usepackage{mathrsfs}
\usepackage{booktabs}
\usepackage{floatrow}
\newfloatcommand{capbtabbox}{table}[][\FBwidth]
\usepackage{blindtext}
\usepackage{colortbl}
\geometry{verbose,a4paper,tmargin=33mm,bmargin=25mm,lmargin=27mm,rmargin=27mm}

\newtheorem{theorem}{Theorem}[section]
\newtheorem{corollary}[theorem]{Corollary}
\newtheorem{proposition}[theorem]{Proposition}
\newtheorem{lemma}[theorem]{Lemma}
\newtheorem{remark}[theorem]{Remark}
\newtheorem{definition}[theorem]{Definition}

\newcommand{\phase}{\omega}

\newcommand{\re}{\mbox{\rm Re}}
\newcommand{\im}{\mbox{\rm Im}}

\makeatletter
\newcommand*{\rom}[1]{\expandafter\@slowromancap\romannumeral #1@}
\makeatother

\newcommand{\0}{L^2(\D)}
\newcommand{\1}{H^1(\D)}

\newcommand{\3}{H^{1}_{0}(\D)}
\newcommand{\5}{\tangentspace{u} \cap \tangentspace{\ci u}}

\newcommand{\narrowminus}{\hspace{-2pt}-\hspace{-1pt}}

\newcommand{\eps}{\varepsilon}

\newcommand{\D}{\mathcal{D}}

\newcommand{\lm}{\lambda}

\newcommand{\Zu}{\mathcal{Z}}

\newcommand{\R}{\mathbb{R}}
\newcommand{\C}{\mathbb{C}}

\newcommand{\Ltwo}[2]{(#1,#2)_{L^2(\D)}}
\newcommand{\Hone}[2]{(#1,#2)_{H^1(\D)}}

\newcommand{\Acalinv}[1]{\mathcal{A}^{-1}_{|#1|} \mathcal{I}}

\definecolor{mycolor1}{rgb}{0.00000,0.44700,0.74100}%

\newcommand{\quotes}[1]{``#1''}

\newcommand{\dx}{\hspace{2pt}\mbox{d}x}

\newcommand{\ci}{\mathrm{i}} 

\newcommand{\sR}{\mbox{\rm \tiny R}}

\newcommand{\nablaR}{\nabla_{\hspace{-2pt}\sR}}
\newcommand{\VR}{V_{\hspace{-1pt}\sR}}

\newcommand{\tangentspace}[1]{T_{#1}\mathbb{S}}

\newcommand{\Acal}{\mathcal{A}}

\begin{document}
\begin{center}
{\LARGE 
Convergence of a Riemannian gradient method for the Gross--Pitaevskii energy functional in a rotating frame\renewcommand{\thefootnote}{\fnsymbol{footnote}}\setcounter{footnote}{0}
 \hspace{-3pt}\footnote{The authors acknowledge the support by the German Research Foundation (DFG grant HE 2464/7-1).}}\\[2em]
\end{center}

\begin{center}
{\large Patrick Henning\footnote[1]{\label{affiliation}Department of Mathematics, Ruhr-University Bochum, DE-44801 Bochum, Germany.\\ email: \href{mailto:patrick.henning@rub.de}{patrick.henning@rub.de} and \href{mailto:mahima.yadav@rub.de}{mahima.yadav@rub.de} }
and Mahima Yadav\textsuperscript{\ref{affiliation}}
}\\[2em]
\end{center}

\begin{center}
{\large March 18, 2025} 
\end{center}

\begin{abstract}
This paper investigates the numerical approximation of ground states of rotating Bose-Einstein condensates. This problem requires the minimization of the Gross--Pitaevskii energy $E$ on a Hilbert manifold $\mathbb{S}$. To find a corresponding minimizer $u$, we use a generalized Riemannian gradient method that is based on the concept of Sobolev gradients in combination with an adaptively changing metric on the manifold. By a suitable choice of the metric, global energy dissipation for the arising gradient method can be proved. The energy dissipation property in turn implies global convergence to the density $|u|^2$ of a critical point $u$ of $E$ on $\mathbb{S}$. Furthermore, we present a precise characterization of the local convergence rates in a neighborhood of each ground state $u$ and how these rates depend on the first spectral gap of $E^{\prime\prime}(u)$ restricted to the $L^2$-orthogonal complement of $u$. With this we establish the first convergence results for a Riemannian gradient method to minimize the Gross--Pitaevskii energy functional in a rotating frame. At the same time, we refine previous results obtained in the case without rotation. The major complication in our new analysis is the missing isolation of minimizers, which are at most unique up to complex phase shifts. For that, we introduce an auxiliary iteration in the tangent space $T_{\mathrm{i} u} \mathbb{S}$ and apply the Ostrowski theorem to characterize the asymptotic convergence rates through a weighted eigenvalue problem. Afterwards, we link the auxiliary iteration to the original Riemannian gradient method and bound the spectrum of the weighted eigenvalue problem to obtain quantitative convergence rates. 
Our findings are validated in numerical experiments.
\end{abstract}

\section{Introduction}
At extreme temperatures approaching zero Kelvin, dilute bosonic gases can undergo a fascinating transition to a state of matter known as Bose--Einstein condensate (BEC), cf. \cite{Bos24,Ein24,PiS03}. One of the properties that sets it apart from other states of matter is its extraordinary superfluid nature. Experimentally, the superfluidity of a BEC can be confirmed by letting it rotate by means of a stirring potential. If the rotation is sufficiently strong, quantized vortices appear as a distinctive sign of superfluidity \cite{MAH99}. Furthermore, the larger the angular velocity (relative to the strength of the trapping potential), the more vortices appear in the condensate. In this paper, we are concerned with the numerical approximation of ground states of such a rotating BEC. Ground states are the stable stationary states at the lowest possible energy level, which are mathematically defined as (constrained) global minimizers of the Gross--Pitaevskii energy functional. As derived in the comprehensive review papers by Bao et al. \cite{Bao14,BaC13b}, the Gross--Pitaevskii energy functional in a rotational frame is given by
\begin{align*}
E(v):= \frac{1}{2}\int_{\mathcal{D}} |\nabla v|^2 + V\, |v|^2  - \Omega\, \bar{v}\, \mathcal{L}_{3}v + \frac{\beta}{2} |v|^4 \dx
\end{align*}
for $v \in H^1_0(\D;\C)$.  Here, $\D \subset \R^d$ (for $d=2,3$) is the computational domain, $V \in L^{\infty}(\D;\R_{\ge 0})$ represents a trapping potential, $\beta \in \R_{\ge }0$ describes the strength of (repulsive) particle interactions, $\Omega \in \R$ is the angular velocity of a stirring potential and $\mathcal{L}_3 = - \ci \left( x_1 \partial_{x_2} - x_2 \partial_{x_1} \right)$ denotes the $x_3$-component of the angular momentum (which formally involves a dimension reduction for $d=2$, cf. \cite{BaC13b}). A ground state of a rotating BEC is now defined as global minimizer of the energy $E$ on the $L^2$-unit sphere 
\begin{align}
\label{def-L2-spehere} \mathbb{S} := \{ v \in H^1_0(\D;\C) \,| \, \|v\|_{L^2(\D)} = 1 \}.
\end{align}
The density of a ground state $u$ is associated with the real-valued (and physically observable) quantity $|u|^2$. Consequently, the normalization condition $u \in \mathbb{S}$, or equivalently $\int_{\D} |u|^2 \dx =1$, can be seen a constraint for the mass of the condensate.

Numerical schemes for computing a ground state consist of two crucial components: the choice of an iterative solver to approach a (constrained) energy minimizer and the choice of a spatial discretization (i.e., a discrete space in which $E$ is minimized). Regarding the latter aspect, common space discretizations involve spectral and pseudo-spectral methods \cite{AnD14,Bao14,BaC13b,CCM10}, Lagrange finite element methods (FEM) \cite{CCM10,CGHZ11,DaH10,HSW21,PHMY24}, discontinuous Galerkin composite finite elements \cite{EngGianGrub22}, spectral element methods \cite{CLLZ24-discrete} and generalized finite elements \cite{HMP14b,HeP23}. In particular, optimal order error estimates for (conforming) FEM approximations of ground states of rotating BECs were first derived in \cite{PHMY24}. 

In this paper, we will discard the spatial discretization and instead focus on the second crucial aspect of a suitable numerical scheme: the construction of iterative solvers for finding minimizers of $E$ and the corresponding error analysis. The majority of suitable iterative solvers in the literature is obtained from the perspective of (Riemannian) gradient flows \cite{ALT17,AltPetSty22,BaD04,BWM05,CDLX23,CLLZ24,DaK10,DaP17,KaE10,Zhang2022}. However, the problem can be also regarded from the perspective of eigenvalue solvers by considering the Euler--Lagrange equations of the constrained minimization problem. These equations, known as the Gross--Pitaevskii eigenvalue problem, can be approached with (nonlinear) eigenvalue solvers such as generalized inverse iterations \cite{AHP21NumMath,JarKM14,PH24} or self consistent field iterations \cite{Can00,CaL00,CaL00B,DiC07} with corresponding convergence results obtained in \cite{CKL21}. Finally, the problem can be also tackled with various Newton-type methods \cite{APS23Newton,WWB17,XXXY21}. Despite the different perspectives, many of these iterative methods are closely related or even equivalent and we refer to \cite{HenJar24} for a recent review on the topic.

Although there is a large variety of different methods, analytical convergence results are rare in the literature and were only obtained in recent years. To the best of our knowledge, there is only one contribution that addresses convergence of an iterative solver for rotating BECs, that is, for the $J$-method \cite{AHP21NumMath} (a particular inverse iteration originally proposed by Jarlebring et al. \cite{JarKM14}). In the paper at hand we consider a different class of solvers, so-called discrete Sobolev gradient methods \cite{DaK10,DaP17,KaE10} (which are generalized Riemannian gradient methods). So far, these methods have been only analyzed in settings without rotation (i.e. $\Omega =0$), where the first results were obtained by Faou and J\'{e}z\'{e}quel \cite{FaT18} who proved local convergence for a discrete $L^2$-gradient flow (DNGF) in space dimension $d=1$ and for interaction constants $\beta \le 0$. Later, these findings were generalized in \cite{PH24} to $d=1,2,3$ and $\beta \ge 0$. For discrete $H^1$-gradient flows, convergence was recently established in \cite{CLLZ24-discrete,CLLZ24}, again for $d=1,2,3$ and $\beta\ge0$. Finally, in the same setting, local convergence for energy-adaptive gradient flows was proved in \cite{CLLZ24,PH24,Zhang2022} and global convergence in \cite{HLP24,HeP20}. Here we note that \cite{CLLZ24-discrete} and \cite{HLP24} are currently the only works that consider iterations in a fully discrete setup.

The main goal of this paper is to generalize the aforementioned energy-adaptive gradient flow to the case of rotating BECs and to extend the convergence results of \cite{CLLZ24,PH24,HeP20,Zhang2022} accordingly. The construction of the particular gradient flow is based on selecting an adaptively changing metric, encoded in a Hilbert space $X$, such that the corresponding Sobolev gradient at a fixed location $v\in \mathbb{S}$ fulfills $\nabla_X E(v)=v$. With this choice, the corresponding gradient flow is purely driven by the $X$-orthogonal projection in the tangent space. The idea was first proposed in \cite{HeP20} to achieve global energy dissipation (in the case without rotation) and the approach was later transferred to the Stiefel manifold \cite{AltPetSty22}. By discretizing the arising gradient flow, a generalized Riemannian gradient method is obtained. As mentioned above, linear order convergence for $\Omega=0$ was proved in \cite{CLLZ24,Zhang2022} using energy-descent ({\L}ojasiewicz-type) inequalities and in \cite{PH24} through weighted eigenvalue problems. However, all three works crucially exploit the following property: if $u$ is a ground state with ground state eigenvalue $\lambda>0$, i.e., $E^{\prime}(u)= \lambda \,(u,\cdot)_{L^2(\D)}$, then $\lambda$ is also the simple smallest eigenvalue of the linearized Gross--Pitaevskii operator $\mathcal{A}_{|u|}v := - \Delta v + V\, v + \beta |u|^2 v$ (cf. \cite{CCM10} for a proof of the result). Unfortunately, this result is no longer true for rotating BECs (cf. Section \ref{section-numerical-exp}), so that the proof techniques are not directly applicable anymore. Besides, the minimization problem with rotation yields additional analytical challenges such as the step from a fully real-valued setting to a complex-valued setting and, related to this, missing uniqueness of ground states \cite{BWM05}. In fact, the energy is invariant under complex phase shifts of the form $u \mapsto \exp(\ci \phase) u$ for any $\phase\in [-\pi, \pi)$, which implies that any such phase shift of a ground state $u$ remains a ground state, though both share the same density $|u|^2=|\exp(\ci \phase) u|^2$ (which is also the physically relevant quantity). Since the ground states are not isolated (due to the invariance under phase shifts), we do not even have local uniqueness.

Our analysis of the energy-adaptive discrete gradient flow, which we call from now on {\it energy-adaptive Riemannian gradient method}, is based on first establishing strict energy dissipation for the iterates and global convergence of the iterated densities to, at least, a critical point of the Gross--Pitaevskii energy on $\mathbb{S}$. Locally in a neighborhood of a ground state we further establish a quantitative result on the local linear convergence of the densities. For that, we introduce an auxiliary iteration which has the same structure as the original scheme, but which artificially adjusts the phase in each iteration so that the iterates are in the same phase as a particular ground state. The local convergence of the auxiliary iterates can now be approached with abstract theorems on the convergence of fixed-point iterations (Ostrowski theorem), which in turn allows us to characterize the rates through a weighted eigenvalue problem in the tangent space $\tangentspace{u}$ of the ground state (which defines the reference phase). By studying the spectrum of this eigenvalue problem, we can give an accurate characterization of the convergence properties. In particular, we can prove how the rates depend on the first spectral gap of $E^{\prime\prime}(u) \vert_{\tangentspace{u}}$. Finally, the convergence results are transferred to the original gradient method. Even in the case without rotation, our findings extend and sharpen the results of \cite{CLLZ24,PH24,HeP20,Zhang2022} for the energy-adaptive Riemannian gradient method.\\[0.4em]
{\it Overview.} The paper is organized as follows: Section \ref{section-a_u-scheme} establishes preliminary notations and introduces the minimization problem along with corresponding well-posedness results. In Section \ref{section-DNSGF} we motivate and define 
the {\it energy-adaptive Riemannian gradient method} and formulate our main results regarding global and local convergence of the proposed scheme. 
The corresponding proof of global convergence together with its energy diminishing property are given in Section \ref{section-global-convergence}. Local convergence and the characterization of rates is proved in Section \ref{section-local-convergence-proof}. Finally, Section \ref{section-numerical-exp} is devoted to numerical tests to validate the theoretical findings.
\section{Mathematical setting}
\label{section-a_u-scheme}
In this section, we introduce the basic notation, our set of assumptions and the precise problem setting including well-posedness. In what follows, we always assume that
\begin{enumerate}[label={(A\arabic*)}]
\item \label{A1} $\D \subset \mathbb{R}^d$ is a bounded convex domain for $d=2,3$.
\item\label{A2} $V \in L^{\infty}(\mathcal{D},\mathbb{R}_{\ge 0} )$ denotes a non-negative potential and
\item\label{A3} the parameter $\beta \in \R_{\ge 0}$ characterizes the strength of (repulsive) particle interactions.
\end{enumerate}
Since we consider the minimization of a real-valued functional $E$, we need to work with {\it real-linear} spaces consisting of complex-valued functions (cf. \cite{AHP21NumMath,Begout22}). For that reason, we equip the Lebesgue space $L^2(\mathcal{D}):= L^2(\mathcal{D};\C)$ and the Sobolev space $H_{0}^{1}(\mathcal{D}):= H_{0}^{1}(\mathcal{D};\mathbb{C})$ 
respectively with the following real inner products 
\begin{align*}
\Ltwo{v}{w}:= \re \Big( \int_{\D} v \, \overline{w} \dx \Big) \quad \mbox{and} 
\quad \Hone{v}{w}:= \re \Big(  \int_{\mathcal{D}} v \, \overline{w} \dx + \int_{\mathcal{D}} \nabla v \cdot \overline{\nabla w} \dx \Big).
\end{align*}
Here, $\overline{w}$ denotes the complex conjugate of $w$. The corresponding $(\text{real})$ dual space is denoted by $H^{-1}(\mathcal{D}) := \big(H_{0}^{1}(\D)\big)^{*}$, with canonical duality pairing $ \langle \cdot , \cdot \rangle := \langle \cdot ,\cdot \rangle _{H^{-1}(\mathcal{D}) , H^{1}_{0}(\mathcal{D})}$.

\subsection{Problem formulation }
\label{subsection-problem-formulation}
We begin by introducing the  Gross--Pitaevskii energy functional $E :H_{0}^{1}(\D) \rightarrow \mathbb{R}$ with 
\begin{align}
\label{energy1}
E(v)
:= \frac{1}{2}\int_{\mathcal{D}} |\nabla v|^2 + V\, |v|^2  - \Omega\, \bar{v}\, \mathcal{L}_{3}v + \frac{\beta}{2} |v|^4 \dx
\qquad
\mbox{for } v\in H^1_0(\D).
\end{align}
Here, $\mathcal{L}_3 := - \ci \left( x_1 \partial_{x_2} - x_2 \partial_{x_1} \right)$ denotes the third component of the angular momentum operator, hence describing a rotation around the $x_3$-axis of the coordinate system. The corresponding angular velocity of this component is given by $\Omega \in \mathbb{R}$. Recalling the $L^2$-unit sphere $\mathbb{S}$ from \eqref{def-L2-spehere}, we are concerned with the following Riemannian minimization problem:
\begin{align}
\label{minimization-problem}
\mbox{Find } u \in \mathbb{S}: 
\hspace{20pt}
E(u) = \underset{v \in \mathbb{S}}{\inf}\hspace{2pt} E(v).
\end{align}
Any such minimizer $u$ is called a {\it ground state} of the rotating Bose--Einstein condensate. Physically speaking, ground states are the stable stationary states at the lowest possible energy level (under a constraint for the mass of the BEC, represented by the condition $u  \in \mathbb{S}$). Since every ground state $u$ is a critical point of $E$ on $\mathbb{S}$, we can characterize it by the corresponding Euler--Lagrange equation seeking $u\in \mathbb{S}$ with Lagrange multiplier $\lambda \in \R$ such that
\begin{align}
\label{eigenvalue-E-derivative-form}
\langle E^{\prime}(u) , v \rangle = \lambda \, ( u , v )_{L^2(\D)}  \qquad \mbox{for all } v\in H^1_0(\D),
\end{align}
where $E^{\prime}: H^1_0(\D) \rightarrow H^{-1}(\D)$ denotes the Fr\'echet derivative of $E$ and the right hand side is the derivative of the constraint functional $\tfrac{1}{2}( \| v \|_{L^2(\D)}^2 - 1)$. Since we can interpret $\lambda$ as an eigenvalue with eigenfunction $u$, problem \eqref{eigenvalue-E-derivative-form} is known as the {\it Gross--Pitaevskii eigenvalue problem} (GPEVP). The derivative $E^{\prime}$ is straightforwardly computed as
\begin{align}
\label{Eprime-def}
\langle E^{\prime}(u) , v \rangle
:= ( \nabla u , \nabla v)_{L^2(\D)} + ( V\, u , v )_{L^2(\D)}  - \Omega\, ( \mathcal{L}_{3} u , v )_{L^2(\D)} + \beta ( |u|^2 u , v )_{L^2(\D)}.
\end{align}
Consequently, the GPEVP \eqref{eigenvalue-E-derivative-form} can be written as: find $\lambda > 0$ and $u \in\mathbb{S}$ such that
\begin{align}
\label{eigen_value_problem_1}
-\Delta u + V\, u - \Omega \, \mathcal{L}_{3}u + \beta \, |u|^2u = \lambda \, u.
\end{align}
Any eigenvalue $\lambda$ that belongs to a ground state $u$ is called a {\it ground state eigenvalue}. Note that a ground state eigenvalue is not necessarily the smallest eigenvalue of \eqref{eigen_value_problem_1}, cf. the numerical experiments in \cite[Section 6.3]{AHP21NumMath} and \cite[Section 4.3]{AHYY24}. 
\subsubsection{Well-posedness: existence and local quasi-uniqueness}
For a proper analysis, we first need to discuss the existence and (missing) uniqueness of ground states to the minimization problem \eqref{minimization-problem}. In order for a ground state to exist, the strength of the trapping potential needs to be sufficiently strong compared to the angular velocity $\Omega$. Following \cite{BWM05}, we assume that:
\begin{enumerate}[resume,label={(A\arabic*)}]
\item\label{A4} There is a constant $K > 0$ such that
\begin{align*}
	V(x) - \frac{1 + K}{4} \Omega^2 (x_1^2 + x_2^2) \ge  0 \quad \text{for almost all } x \in \D.
\end{align*}
\end{enumerate}
Physically speaking, the assumption states that the trapping frequencies of the potential $V$ in $x_1$- and $x_2$-direction are larger than the frequency $\Omega$ of the stirring potential (which triggers the rotation around the $x_3$-axis). A violation of this condition leads (typically) to the non-existence of minimizers, cf. \cite{BWM05}. On the contrary, if \ref{A4} holds, existence of ground states can be proved with standard techniques, where we refer exemplarily to \cite[Section 2]{PHMY24} for more details. In particular, the following holds. 
\begin{proposition}[Existence of ground states]
Assume \ref{A1}-\ref{A4}, then there exists at least one ground state $u \in \mathbb{S}$ to problem \eqref{minimization-problem} and it holds $E(u)>0$.
\end{proposition}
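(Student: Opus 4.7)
The plan is to apply the direct method of the calculus of variations. The only nontrivial input is a coercivity estimate that uses \ref{A4} to absorb the sign-indefinite rotation term; everything else is standard.

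First I would prove a universal lower bound of the form $E(v)\ge c\,\|\nabla v\|_{L^2(\D)}^2$ on all of $H^1_0(\D)$. Starting from the pointwise bound $|\bar v\,\mathcal{L}_3 v|\le|v|\sqrt{x_1^2+x_2^2}\,|\nabla v|$ and applying Young's inequality with the weight $\delta=2/(1+K)$, one obtains
\[
|\Omega|\,|\bar v\,\mathcal{L}_3 v|\le \tfrac{1}{1+K}|\nabla v|^2+\tfrac{(1+K)\Omega^2}{4}(x_1^2+x_2^2)|v|^2\qquad\text{a.e.\ in }\D.
\]
Integrating and inserting \ref{A4} gives
\[
E(v)\ge \tfrac{K}{2(1+K)}\|\nabla v\|_{L^2(\D)}^2+\tfrac{\beta}{4}\|v\|_{L^4(\D)}^4\ge 0,
\]
so $E$ is bounded below on $\mathbb{S}$ and any minimizing sequence is bounded in $H^1_0(\D)$.

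Second, I would pick a minimizing sequence $(u_n)\subset\mathbb{S}$ and extract a (non-relabelled) subsequence with $u_n\rightharpoonup u$ weakly in $H^1_0(\D)$. Because $\D$ is a bounded Lipschitz domain in dimension $d=2,3$, the Rellich--Kondrachov theorem yields strong convergence $u_n\to u$ in $L^2(\D)$ and in $L^4(\D)$. In particular $\|u\|_{L^2(\D)}=1$, hence $u\in\mathbb{S}$.

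Third, I would pass to the limit in $E(u_n)$. Weak lower semicontinuity of the Dirichlet norm handles the kinetic term; strong $L^2$- and $L^4$-convergence, together with $V\in L^\infty(\D)$, make the $V$- and quartic terms continuous along the sequence. The rotation term is likewise continuous: writing
\[
\int_{\D}\bar u_n\,\mathcal{L}_3 u_n\dx=-\ci\int_{\D}(x_1\bar u_n)\,\partial_{x_2}u_n\dx+\ci\int_{\D}(x_2\bar u_n)\,\partial_{x_1}u_n\dx,
\]
each integrand is the product of a factor converging strongly in $L^2(\D)$ (since $x_j$ is bounded on $\D$ and $u_n\to u$ in $L^2(\D)$) with a factor converging weakly in $L^2(\D)$, which passes to the limit. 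Combining these with the semicontinuity estimate yields $E(u)\le\liminf_n E(u_n)=\inf_{\mathbb{S}}E$, so $u$ is a ground state. Strict positivity $E(u)>0$ then follows from the coercive lower bound together with the Poincaré inequality on $H^1_0(\D)$, which forces $\|\nabla u\|_{L^2(\D)}>0$ for any $u\in\mathbb{S}$.

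The main obstacle is the first step: the rotation contribution is sign-indefinite and couples $v$ with $\nabla v$, so extracting coercivity depends on precisely the $(1+K)/4$-balance built into \ref{A4}. The remaining ingredients -- weak compactness in $H^1_0(\D)$, compact Sobolev embedding into $L^4(\D)$ for $d=2,3$, and weak lower semicontinuity of the Dirichlet energy -- are textbook.
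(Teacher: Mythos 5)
Your proposal is correct and follows exactly the standard direct-method argument that the paper itself invokes (the paper gives no proof, referring to \cite{PHMY24} instead): coercivity from \ref{A4} by absorbing the rotation term, weak compactness plus Rellich--Kondrachov on the bounded domain, weak lower semicontinuity of the gradient term, and Poincar\'e for strict positivity. The only cosmetic difference is that the paper encodes the same absorption through the covariant gradient $\nablaR$ and the equivalence of $\|\cdot\|_{\sR}$ with the $H^1$-norm, which is precisely your Young-inequality estimate in disguise.
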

As mentioned in the introduction, ground states can be at most unique up to phase shifts since $E(u) = E(\exp(\ci \phase) u)$ for any $\phase \in [-\pi , \pi)$. We call such ground states that only differ in the phase factor $\exp(\ci \phase)$ (gauge) {\it equivalent}. Since equivalent ground states share the same density, i.e.,  $|u|^2=|\exp(\ci \phase) u|^2$, one might wonder if there is hope for a unique ground state density $|u|^2$. Empirically, this seems to be commonly the case, but exceptions are possible, as first pointed out in \cite[Section 6.2]{BWM05}. For a detailed discussion of missing uniqueness we refer to \cite{PHMY24}.

Despite the general lack of uniqueness, it is reasonable to assume that ground states are at least locally quasi-unique, i.e., in any sufficiently small neighborhood of a ground state $u$, the only other ground states are phase shifts of $u$. To make this statement precise, let
$$
\tangentspace{u} := \{ \, v \in H^1_0(\D) \, | \, (u,v)_{\0} = 0 \, \}
$$
denote the tangent space at $u$ and recall that if $\gamma : (-1,1) \rightarrow \mathbb{S}$ is a curve on $\mathbb{S}$ with, e.g., $\gamma(0)=u \in \mathbb{S}$, then it must naturally hold $\gamma^{\prime}(0) \in \tangentspace{u}$ because the slope of the curve is tangential to the sphere in any point. Furthermore, any curve that follows the direction of the phase shifts, e.g. locally $\gamma(t)= \exp( \alpha \, \ci \, t )u$ for fixed $\alpha \in \R \setminus \{0\}$, has the slope $\gamma^{\prime}(0)= \alpha \, \ci u$. Hence, if $\gamma$ is a curve on $\mathbb{S}$ with $\gamma(0)=u$, then the direction of phase shifts of $u$ is blocked if $\gamma^{\prime}(0) \in (\tangentspace{u} \cap \tangentspace{\ci u})$. In this case, $t \mapsto E(\,\gamma(t)\,)$ has a strict local minimum in $t=0$ if $u$ is a locally unique ground state (up to phase shifts).
With the above considerations we follow the definition in \cite{PHMY24} and introduce the notion of {\it locally quasi-unique ground states} below.
\begin{definition}[Locally quasi-unique ground state] 
\label{definition-quasi-isolated-ground-state}
A ground state $u$ to \eqref{minimization-problem} is called \emph{locally quasi-unique} if for any smooth curve $\gamma : (-1,1) \rightarrow \mathbb{S}$  with $\gamma(0)=u$ and $\gamma^{\prime}(0) \in (\tangentspace{u} \cap \tangentspace{\ci u}) \setminus \{ 0 \}$ it holds
 $$ \frac{ \mbox{d}^{\,2} }{ \mbox{d}\hspace{1pt}t^{2}} E(\gamma(t)) |_{t=0} > 0.$$
\end{definition} 
The definition precisely guarantees that $u$ is an isolated minimum of $E$ up to phase shifts.
The property can be checked a posteriori by numerically computing the spectrum of $E^{\prime\prime}(u) \vert_{\tangentspace{u}}$ once $u$ is available (cf. the subsection below, as well as Section \ref{section-numerical-exp}). In fact, in numerous experiments over the last years, we never encountered a ground state that was not locally quasi-unique in the sense of Definition \ref{definition-quasi-isolated-ground-state}. Hence, assuming local quasi-uniqueness of a ground state is not restrictive and, in our experience, covers (at least) the vast majority of ground states encountered in practice (if not all of them).

\subsection{The spectrum of $E^{\prime\prime}(u) \vert_{\tangentspace{u}}$}
In this section, we will introduce some simplifying notations and we will have a closer look at the spectrum of $E^{\prime\prime}(u)$. Recalling the Gross--Pitaevskii operator $E^{\prime}(u)$ from \eqref{Eprime-def}, we introduce its linearized version $\Acal_{|u|} : H^1_0(\D) \rightarrow H^{-1}(\D)$ for any $u\in H^1_0(\D)$ by
\begin{align}
\label{defAcal-u-square}
\langle \Acal_{|u|} v , w \rangle
:= ( \nabla v , \nabla w)_{L^2(\D)} + ( V\, v , w )_{L^2(\D)}  - \Omega\, ( \mathcal{L}_{3} v , w )_{L^2(\D)} + \beta ( |u|^2 v , w )_{L^2(\D)}.
\end{align}
In particular, we have $E^{\prime}(u)= \Acal_{|u|}u$. In the case without rotation (i.e. $\Omega=0$), it can be shown that, for any ground state $u$, the smallest eigenvalue of the $u$-linearized operator $\Acal_{|u|}$ is identical to the ground state eigenvalue $\lambda$ (cf. \cite{CCM10}). In our setting with rotation, this is typically no longer the case, as demonstrated numerically in Section \ref{section-numerical-exp}. Consequently, we cannot exploit this property (used in previous works \cite{CLLZ24,PH24,Zhang2022}) for our convergence analysis and we will work with the spectrum of $E^{\prime\prime}(u) \vert_{\tangentspace{u}}$ instead.

Before we turn to $E^{\prime\prime}(u)$, note that the $u$-independent part of $\Acal_{|u|}$ can be expressed by a symmetric and coercive bilinear form. For that, we define $\VR(x):= V(x) - \tfrac{1}{4}\, \Omega^2\, |x|^2$ (which is non-negative by \ref{A4}) and the covariant gradient $\nablaR$ by
\begin{align}
\label{def-nablaR}
\nablaR v := \nabla v + \ci \tfrac{\Omega}{2} \text{R}^{\hspace{-1pt}\top} v,
\qquad
\mbox{where } 
\text{R}(x) := \begin{cases}
(x_2,-x_1) & \mbox{for } d=2 \\
(x_2,-x_1,0) & \mbox{for } d=3
\end{cases}.
\end{align}
With this, we define the symmetric bilinear form
\begin{align}
\label{R-inner-product-clone}
(v,w)_{\sR} := \re \Big(  \int_{\mathcal{D}} \nablaR v \cdot \overline{\nablaR w} \dx + \int_{\mathcal{D}} \VR v \, \overline{w} \dx  \Big).
\end{align}
Under our assumptions, $(\cdot,\cdot)_{\sR}$ is an inner product on $H^1_0(\D)$ and its induced norm $\| v \|_{\sR}:= \sqrt{(v ,v)_{\sR}}$ is equivalent to the standard $H^1$-norm, cf. \cite{DaK10,pc22} for a proof. Using the above notation, the energy and the operator $ \Acal_{|u|}$ can be compactly written as
\begin{align}
\label{energy_function-clone}
E(v) = \tfrac{1}{2} \| v \|_{\sR}^2 +  \tfrac{\beta}{4} \int_{\D} |v|^4 \dx
\qquad
\mbox{and}
\qquad
\langle \Acal_{|u|}v , w \rangle = (v,w)_{\sR} + \beta \, ( |u|^2 v,w )_{\0}.
\end{align} 
The above expression shows that $\Acal_{|u|}$ is $L^2$-self-adjoint and elliptic (using the equivalence of the norms $\| v \|_{\sR}$ and $\| v\|_{H^1(\D)}$). Furthermore, we can express the Gross--Pitaevskii eigenvalue problem  \eqref{eigenvalue-E-derivative-form} compactly as: find $u\in \mathbb{S}$ and $\lambda >0$ such that
\begin{align}
 \label{eigen_value_problem_2}
\langle \Acal_{|u|}u , v \rangle = \lambda \, ( u , v )_{L^2(\D)} \qquad \mbox{for all } v\in H^1_0(\D).
\end{align} 
With the above notation, we can now turn towards $E^{\prime\prime}(u)$ which is computed for $u\in \mathbb{S}$ as 
\begin{align} 
\label{sd}
\langle E''(u)v,w \rangle =  \langle \Acal_{|u|}v,w \rangle +  2\, \beta \, ( \re (u \overline{v}) \, u,w)_{\0} \quad \mbox{for } v,w \in \3. 
\end{align}
As discussed in the previous subsection, for any ground state $u\in \mathbb{S}$, the smallest eigenvalue of $E^{\prime\prime}(u)\vert_{\tangentspace{u}}$ must coincide with the corresponding ground state eigenvalue $\lambda$ given by \eqref{eigen_value_problem_2}. Indeed, as $\ci u \in \tangentspace{u}$ we easily calculate that
\begin{align*} 
\langle E''(u)\ci u, v \rangle =  \langle \Acal_{|u|}\ci u, v \rangle +  2\, \beta \, ( \re (u \overline{\ci u}) \, u,v )_{\0} 
=    \langle \Acal_{|u|}\ci u, v \rangle  =  \lambda \, ( \ci u , v )_{L^2(\D)}
\end{align*}
for any $v\in H^1_0(\D)$. The eigenfunction $\ci u$ to the eigenvalue $\lambda$ just corresponds to the direction of complex phase shifts. The following lemma states that $\lambda$ is a simple eigenvalue of $E''(u)\vert_{\tangentspace{u}}$ if $u$ is a locally quasi-unique ground state.
\begin{lemma} \label{coercive_lemma} Assume \ref{A1}-\ref{A4} and let $u$ be a ground state to \eqref{minimization-problem} that is locally quasi-unique in the sense of Definition \ref{definition-quasi-isolated-ground-state} and with corresponding ground state eigenvalue $\lambda$. Furthermore, let $0 < \lambda_1 \le \lambda_2 \le \cdots $ denote the ordered eigenvalues of $E''(u)\vert_{\tangentspace{u}}$, i.e., $v_i \in \tangentspace{u}$ and $\lambda_i \in \R$ solve the eigenvalue problem   
\begin{align}
\label{spectrum-E''u}
\langle E''(u)  v_i  , w \rangle = \lambda_i \hspace{1pt}(v_i , w )_{L^2(\D)} \qquad
\mbox{for all } w \in \tangentspace{u}.
\end{align}
Then $\lambda_1>0$ is a simple eigenvalue to the eigenfunction $v_1 = \ci u$ and it holds $\lambda_1 = \lambda$. Furthermore $\langle E''(u) \, \cdot , \cdot \rangle - \lambda (\cdot ,\cdot)_{L^2(\D)}$ is coercive on $\tangentspace{u} \cap \tangentspace{\ci u}$ with 
\begin{align}
\label{coercivity-secE-lambda}
\langle E''(u) \, v , v \rangle - \lambda \, (v ,v)_{L^2(\D)}
&\ge 
  \tfrac{1}{2} \min \{ 1, \tfrac{\lm_2}{\lm_1}-1\} \|v\|^2_{\sR}
 \qquad \mbox{for all } v \in \tangentspace{u} \cap \tangentspace{\ci u}.
\end{align}
In particular, by $\lambda_1<\lambda_2$ and the norm equivalence of $\| \cdot \|_{\sR}$ and $\| \cdot \|_{H^1(\D)}$ there is a constant $C>0$ such that it holds $\langle E''(u) \, v , v \rangle - \lambda (v ,v)_{L^2(\D)} \ge C  \|v\|^2_{H^1(\D)}$ for all $ v \in \tangentspace{u} \cap \tangentspace{\ci u}$.
\end{lemma}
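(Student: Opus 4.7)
The plan is to work through three connected pieces, each a short application of optimality of $u$, quasi-isolation, or the Rayleigh principle.

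\textbf{Step 1: identify $\lambda_1 = \lambda$.} For any $v \in \tangentspace{u}$, I pick a smooth curve $\gamma:(-1,1)\to\mathbb{S}$ with $\gamma(0)=u$ and $\gamma'(0)=v$. Differentiating $\|\gamma(t)\|_{L^2(\D)}^2=1$ twice at $t=0$ yields $(u,\gamma''(0))_{L^2(\D)}=-\|v\|_{L^2(\D)}^2$, and a chain-rule expansion of $(E\circ\gamma)''(0)$ combined with $E'(u)=\lambda(u,\cdot)_{L^2(\D)}$ gives the identity
$$\langle E''(u)v,v\rangle - \lambda\,\|v\|_{L^2(\D)}^2 \;=\; (E\circ\gamma)''(0) \qquad \text{for all } v \in \tangentspace{u}.$$
Since $u$ minimizes $E$ on $\mathbb{S}$, the right-hand side is $\ge 0$, so the Rayleigh characterization of $\lambda_1$ gives $\lambda_1 \ge \lambda$. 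The computation immediately before the lemma shows that $\ci u \in \tangentspace{u}$ already achieves the Rayleigh ratio $\lambda$, so $\lambda_1 = \lambda$ with eigenfunction $v_1 = \ci u$.

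\textbf{Step 2: simplicity and spectral gap.} If the eigenspace for $\lambda_1$ were more than one-dimensional, I could $L^2$-orthogonalize a second eigenfunction against $\ci u$ to extract a nonzero $v \in \tangentspace{u}\cap\tangentspace{\ci u}$ with $\langle E''(u)v,v\rangle = \lambda\|v\|_{L^2(\D)}^2$. Taking a smooth curve $\gamma$ on $\mathbb{S}$ with $\gamma(0)=u$ and $\gamma'(0)=v$, the identity from Step 1 forces $(E\circ\gamma)''(0) = 0$, contradicting Definition \ref{definition-quasi-isolated-ground-state}. Hence $\lambda_1$ is simple, and consequently $\lambda_2 > \lambda_1$.

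\textbf{Step 3: quantitative coercivity.} Rayleigh's principle on the $L^2$-orthogonal complement of the $\lambda_1$-eigenspace $\mathrm{span}(\ci u)$ gives $\langle E''(u)v,v\rangle \ge \lambda_2\|v\|_{L^2(\D)}^2$ for all $v \in \tangentspace{u}\cap\tangentspace{\ci u}$. Writing $\lambda\|v\|_{L^2(\D)}^2 = (\lambda_1/\lambda_2)\,\lambda_2\|v\|_{L^2(\D)}^2 \le (\lambda_1/\lambda_2)\,\langle E''(u)v,v\rangle$ yields
$$\langle E''(u)v,v\rangle - \lambda\,\|v\|_{L^2(\D)}^2 \;\ge\; \bigl(1 - \tfrac{\lambda_1}{\lambda_2}\bigr)\,\langle E''(u)v,v\rangle.$$
From the explicit formula \eqref{sd}, together with $(|u|^2 v,v)_{L^2(\D)}\ge 0$ and the identity $(\re(u\bar v)u,v)_{L^2(\D)} = \|\re(u\bar v)\|_{L^2(\D)}^2 \ge 0$, I get $\langle E''(u)v,v\rangle \ge \|v\|_{\sR}^2$. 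Finally the elementary inequality $s/(1+s) \ge \tfrac{1}{2}\min\{1,s\}$ (valid for $s>0$, checking the cases $s\le 1$ and $s\ge 1$ separately), applied with $s := \lambda_2/\lambda_1 - 1 > 0$, turns the prefactor into $\tfrac{1}{2}\min\{1,\lambda_2/\lambda_1-1\}$, giving \eqref{coercivity-secE-lambda}. The $H^1$-coercivity stated at the end is then immediate from the norm equivalence between $\|\cdot\|_{\sR}$ and $\|\cdot\|_{H^1(\D)}$ recalled after \eqref{R-inner-product-clone}.

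The only nonroutine step is Step 2: converting the curve-based Definition \ref{definition-quasi-isolated-ground-state} into strict positivity of $\langle E''(u)\cdot,\cdot\rangle - \lambda(\cdot,\cdot)_{L^2(\D)}$ on $\tangentspace{u}\cap\tangentspace{\ci u}\setminus\{0\}$, and from there into the spectral gap $\lambda_2 > \lambda_1$. Once this gap is secured, the rest is a chain of Rayleigh estimates combined with the manifest nonnegativity of the two nonlinear contributions in $E''(u)$.
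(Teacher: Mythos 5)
Your proof is correct, and while Step 1 mirrors the paper's argument (both derive $\langle E''(u)v,v\rangle - \lambda\|v\|_{L^2(\D)}^2 = (E\circ\gamma)''(0) \ge 0$ on $\tangentspace{u}$ from optimality, then observe that $\ci u$ attains the minimum), Steps 2 and 3 take a genuinely different route. For simplicity of $\lambda_1$, the paper invokes an external result ([PHMY24, Prop.\ 2.4], inf-sup stability of $\langle E''(u)\cdot,\cdot\rangle - \lambda(\cdot,\cdot)_{L^2(\D)}$ on $\tangentspace{u}\cap\tangentspace{\ci u}$), whereas you give a self-contained contradiction: Gram--Schmidt a hypothetical second $\lambda_1$-eigenfunction against $\ci u$, observe that the resulting $v \in \tangentspace{u}\cap\tangentspace{\ci u}\setminus\{0\}$ still satisfies $\langle E''(u)v,v\rangle = \lambda\|v\|_{L^2(\D)}^2$, and feed it into Definition~\ref{definition-quasi-isolated-ground-state} via the curve identity from Step 1 — a cleaner, more transparent derivation. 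For the coercivity constant, the paper runs a two-case distinction on whether $\|v\|_{\sR}^2 \gtrless 2\lambda\|v\|_{L^2(\D)}^2$; you instead absorb the $\lambda\|v\|_{L^2(\D)}^2$ term multiplicatively via $\lambda\|v\|_{L^2(\D)}^2 \le (\lambda_1/\lambda_2)\langle E''(u)v,v\rangle$ to get the single clean bound $\langle E''(u)v,v\rangle - \lambda\|v\|_{L^2(\D)}^2 \ge (1 - \lambda_1/\lambda_2)\|v\|_{\sR}^2$, and then use the elementary inequality $s/(1+s) \ge \tfrac12\min\{1,s\}$ to recover the paper's stated constant. Your intermediate bound $(1 - \lambda_1/\lambda_2)$ is in fact slightly sharper than $\tfrac12\min\{1,\lambda_2/\lambda_1-1\}$, so this route is both shorter and marginally stronger before being rounded down to match the lemma; the price is that you invoke the Courant--Fischer/Rayleigh principle for $E''(u)\vert_{\tangentspace{u}}$ directly, which is standard for this $L^2$-self-adjoint elliptic operator but goes unremarked, whereas the paper grounds the same inequality~\eqref{positive} in the cited inf-sup result.
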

\begin{proof}
We already verified that $\lambda$ is an eigenvalue of $E''(u)\vert_{\tangentspace{u}}$ with eigenfunction $\ci u$. Since $u$ is a ground state (global minimizer) we have $0 \le \tfrac{\mbox{\scriptsize d}^2}{\mbox{\scriptsize d}t^2}E(\gamma(t))=\langle E''(u) \, v , v \rangle - \lambda \, (v ,v)_{L^2(\D)}$ for any smooth curve $\gamma:(-1,1) \rightarrow \mathbb{S}$ with $\gamma(0)=u$ and $\gamma^{\prime}(0)=v \in \tangentspace{u}$. Hence, there cannot be a smaller eigenvalue than $\lambda$ and therefore $\lambda=\lambda_1$. In \cite[Proposition 2.4]{PHMY24} it was proved that $\langle E''(u) \, \cdot , \cdot \rangle - \lambda \, ( \cdot ,\cdot )_{L^2(\D)}$ is inf-sup stable (and consequently non-singular) on $\tangentspace{u} \cap \tangentspace{\ci u}$. We conclude that $\lambda_1=\lambda$ must be simple and 
\begin{align}
\label{positive}
\langle E''(u) \, v , v \rangle 
\, \geq \, \lm_2 \, \| v \|^2_{\0} \, \qquad \mbox{for all } v \in \tangentspace{u} \cap \tangentspace{\ci u}.
\end{align}
As \eqref{energy_function-clone} and \eqref{sd} imply $\langle E''(u) \, v , v \rangle \geq \|v\|^2_{\sR}$, 
we can show the coercivity by a case distinction for arbitrary $v \in \tangentspace{u} \cap \tangentspace{\ci u}$:
\begin{itemize}
\item If $ \|v\|^2_{\sR} >  2 \, \lm \|v \|^2_{\0} $, then $ - \lm \|v \|^2_{\0}  > - \tfrac{1}{2} \|v\|^2_{\sR} $ and therefore
$$ \langle E''(u) \, v , v \rangle - \lambda \, (v ,v)_{L^2(\D)} \, \geq \,  \|v\|^2_{\sR} - \tfrac{1}{2} \| v \|^2_{\sR} =  \tfrac{1}{2} \|v\|^2_{\sR}.
$$
\item If $  \|v\|^2_{\sR} \leq 2 \, \lm \|v \|^2_{\0} $, then $\|v\|^2_{\0} \geq \tfrac{1 }{2 \lm} \|v\|^2_{\sR} \, .$ This together with \eqref{positive} yields
\begin{align*}
 \langle E''(u) \, v , v \rangle - \lambda \, (v ,v)_{L^2(\D)} \,  \geq \, (\lm_2 - \lm) \, \| v \|^2_{\0} \, \geq \, \tfrac{1}{2} \left( \tfrac{ \lm_2 -\lm}{  \lm}\right) \|v \|^2_{\sR} \, . 
\end{align*}
\end{itemize}
As $\lm = \lm_1$, the equivalence of the $\|\cdot \|_{\1}$- and the $\| \cdot \|_{\sR}$-norm yields the result.
\end{proof}
\section{Energy-adaptive Riemannian gradient method} 
\label{section-DNSGF}
In the following we will introduce the energy-adaptive Riemannian gradient method for the Gross--Pitaevskii energy in a rotating frame and present our main results regarding its convergence.
\subsection{Method formulation}
To derive the scheme, we will use the concept of Sobolev gradients \cite{Neu97}. In general, a Riemannian gradient method for an energy functional $E$ on the $L^2$-sphere $\mathbb{S}$ is based on the negative Riemannian gradient $- P_{v,X} (\,\nabla_X E(\,v\,) \,)$, which shows in the direction of the {\it Riemannian steepest descent} in a point $v \in \mathbb{S}$. The Riemannian gradient is determined in two steps as follows. First, we select a Hilbert space $X$ with inner product $(\cdot,\cdot)_X$ and represent the regular gradient $E^{\prime}(v)$ with respect to the $X$-metric, i.e., the Riesz-representation $\nabla_X E(v) \in X$ is defined as the solution to
\begin{align*}
(\nabla_X E(v) , w )_X = \langle E^{\prime}(v) , w \rangle \qquad
\mbox{for all } w \in X.
\end{align*}
The above representation of $E^{\prime}(v)$ is called a Sobolev gradient. We refer to \cite{DaK10,DaP17,HeP20,KaE10} for common choices for $X$ in the context of the GPEVP. It is also possible to equip the space $X$ adaptively with weighted $H^1$-inner products where the optimal weight functions are computed from nonlinear optimization problems (involving $E^{\prime\prime}$) as proposed in \cite{NoPr23}.

In general, the Riemannian gradient is obtained from the Sobolev-gradient $\nabla_X E(v)$ by projecting it into the tangent space $\tangentspace{v}$. This yields the relevant component of the direction $\nabla_X E(\,v\,)$ that is tangential to $\mathbb{S}$ (in $v$). To be precise, $P_{v,X} (\,\nabla_X E(v) \,) \in \tangentspace{v}$ fulfills
\begin{align*}
( P_{v,X} (\,\nabla_X E(v) \,) , w )_X = ( \nabla_X E(v) , w )_X 
\qquad \mbox{for all } w \in \tangentspace{v}.
\end{align*}
The negative Riemannian gradient $- P_{v,X} (\,\nabla_X E(v) \,)$ defines the (Riemannian) steepest descent direction in $v \in \mathbb{S}$ with respect to the $X$-metric. With this, the corresponding Riemannian gradient method is of the form
\begin{align}
\label{riemannian-gradient-method}
u^{n+1} = \frac{ \hspace{-24pt}u^n - \tau_n \ P_{u^n,X} (\,\nabla_X E(\,u^n\,) \,) }{\| u^n - \tau_n \ P_{u^n,X} (\,\nabla_X E(\,u^n\,) \,) \|_{L^2(\D)} },
\end{align}
i.e., from $u^n \in \mathbb{S}$ we move by the distance $\tau_n>0$ into the steepest descent direction. Typically, $\tau_n$ is computed such that the energy dissipation per iteration becomes as large as possible. After each step, the result has to be normalized in $L^2(\D)$ (which is the canonical retraction to $\mathbb{S}$). The abstract scheme \eqref{riemannian-gradient-method} can be also seen as a discrete gradient flow on $\mathbb{S}$, where the choice $X=L^2(\D)$ yields the projected $L^2$-Sobolev gradient flow, cf. \cite{ALT17,BaC13b}, the choice $X=H^1_0(\D)$ with $H^1$-inner product yields the projected $H^1$-Sobolev gradient flow, cf. \cite{CLLZ24-discrete,CLLZ24,KaE10}, and the choice $X=H^1_0(\D)$ with the $(\cdot,\cdot)_{\sR}$-inner product yields the projected $a_0$-Sobolev gradient flow, cf. \cite{CLLZ24,DaK10}.

In \cite{HeP20} it was suggested to select the Hilbert space $X$ adaptively in each iteration such that the Sobolev gradient becomes the identity (in the spirit of optimal preconditioning). To achieve this, we use the inner product that is induced by the linearized Gross--Pitaevskii operator defined in \eqref{defAcal-u-square}. Hence, for any $u\in \mathbb{S}$ we define
\begin{align}
\label{a_z-norm}
\langle \Acal_{|u|} v , w \rangle \,\,=\,\, \re \int_{\D} \nabla_{\sR}v \cdot \overline{\nabla_{\sR}w} + \VR v \, \overline{w} + \beta |u|^2 v \, \overline{w} \dx ,
\end{align}
for $v,w \in H^1_0(\D)$. Equipping $H^1_0(\D)$ with the $\langle \Acal_{|u|} \cdot,\cdot\rangle$-inner product, a simple calculation shows that
\begin{align*}
\langle \Acal_{|u|}  \nabla_X E(u) , v \rangle 
= \langle E^{\prime}(u) , v \rangle = 
\langle \Acal_{|u|} u , v \rangle
\qquad \mbox{for all } v\in H^1_0(\D)
\end{align*}
and hence $ \nabla_X E(u)=u$ as desired. Furthermore, the $\langle \Acal_{|u|} \, \cdot , \cdot \rangle$-orthogonal projection $P_{u,X} : H^1_0(\D) \rightarrow \tangentspace{u}$ is given by
\begin{align}
\label{def-ortho-projection}
P_{u,X}(v) = v - \frac{(u,v)_{\0}}{(u, \Acalinv{u} u)_{\0} } \Acalinv{u} u,
\end{align}
where $\mathcal{I} : \3 \rightarrow H^{-1}(\D)$ is the canonical identification $\mathcal{I}v :=(v, \cdot )_{\0}$. Note that this means that $\Acalinv{u}u =: z\in \3$ is well-defined as the unique solution to
\begin{align*}
\langle \Acal_{|u|} z , v \rangle
= (u ,v)_{\0} 
\qquad\mbox{for all } v \in \3.
\end{align*}
Altogether, using $\nabla_{X} E(u)=u$ and \eqref{def-ortho-projection} in the general definition of the Riemannian gradient method \eqref{riemannian-gradient-method}, we obtain the following scheme.
\begin{definition}[Energy-adaptive Riemannian gradient method]
Let $u^0 \in \mathbb{S}$ denote a starting value and $\{ \tau_n \}_{n\in \mathbb{N}}$ a sequence of positive step lengths. Then for $n \geq 0$, the iterate $u^{n+1} \in \mathbb{S}$ is defined as
\begin{align}
\label{method}
u^{n+1} = \frac{ \hspace{-29pt} (1-\tau_n) u^n + \tau_n \gamma^n \Acalinv{u^n}u^n }{ \| (1-\tau_n) u^n + \tau_n \gamma^n \Acalinv{u^n}u^n \|_{\0}} \qquad \mbox{with} \qquad \gamma^n := ( u^n , \Acalinv{u^n}u^n )_{\0}^{-1}.
\end{align}
\end{definition}
In \cite{HeP20} for $\Omega=0$, the above method was called projected $\boldsymbol{a_{z}}$-Sobolev gradient flow. The iterations \eqref{method} can be also interpreted as a generalized inverse iteration with damping, where $\tau_n$ represents the damping parameter. Every iteration requires the solution of a linear elliptic problem with the linearized GP-operator $\Acal_{|u^n|}$.
\begin{remark}[Optimal step length]
In practice, the step lengths $\tau_n>0$ in the Riemannian gradient method are selected adaptively by line search such that
\begin{align}
\label{tau-n-optimal}
\tau_n = \underset{\tau >0}{\mbox{\normalfont arg\,min}} \, E (\, u^{n+1}(\tau)\,),
\end{align}
where $u^{n+1}(\tau) \in \mathbb{S}$ denotes the approximation in \eqref{method} for $\tau_n=\tau$. Since $E (\, u^{n+1}(\tau)\,)$ is a rational function in $\tau$ that can be cheaply evaluated, the cost for a line search are negligibly small. 
 The efficient realization is explained in detail in Appendix \ref{appendix-B}.
\end{remark}

\begin{remark}[Continuous gradient flow]
As in the case without rotation \cite{HeP20}, the gradient method \eqref{method} can be also seen as a semi-explicit backward Euler discretization of a continuous gradient flow. The corresponding gradient flow in our setting seeks $u \in C^1((0,\infty);H^1(\D))$ with
\begin{align}
\label{gradient-flow}
u^{\prime}(t) = - u(t) + \frac{ \| u(t) \|_{L^2(\D)}^2 }{ ( u(t) , \Acalinv{u(t)} u(t) )_{\0} ) } \Acalinv{u(t)} u(t),
\end{align}
where the contribution $ \| u(t) \|_{L^2(\D)}^2$ does not appear in \eqref{method}, because the term is treated explicitly and hence becomes $\| u^n \|_{L^2(\D)}^2=1$. To convince ourselves that the (unusual) gradient flow \eqref{gradient-flow} remains naturally on $\mathbb{S}$ and diminishes the energy over time, we start with an initial value $u(0)=u_0 \in \mathbb{S}$. In this case, we obtain
\begin{eqnarray*}
\tfrac{1}{2} \frac{\mbox{\normalfont d}}{\mbox{\normalfont d}t} \| u(t) \|_{L^2(\D)}^2 
&=& ( u^{\prime}(t) , u(t) )_{\0} \\
&\overset{\eqref{gradient-flow}}{=}& - \| u(t) \|_{L^2(\D)}^2  +  \frac{ \| u(t) \|_{L^2(\D)}^2 }{ ( u(t) , \Acalinv{u(t)} u(t) )_{\0} ) } ( \Acalinv{u(t)} u(t) , u(t) )_{\0} \,\,=\,\, 0.
\end{eqnarray*}
Hence, $\| u(t) \|_{L^2(\D)}^2$ is constant in time and therefore $\| u(t) \|_{L^2(\D)}^2=\| u(0) \|_{L^2(\D)}^2=1$. The gradient flow remains on $\mathbb{S}$ for all times. Since we just verified $( u^{\prime}(t) , u(t) )_{\0}=0$ and $ u(t) \in \mathbb{S}$, we can write $\gamma_{u(t)} := \frac{ \| u(t) \|_{L^2(\D)}^2 }{ ( u(t) , \Acalinv{u(t)} u(t) )_{\0} ) } = ( u(t) , \Acalinv{u(t)} u(t) )_{\0} )^{-1}$ and obtain the energy dissipation as
\begin{eqnarray*}
0 &\le& \langle \Acal_{|u(t)|} u^{\prime}(t) , u^{\prime}(t) \rangle
) \,\,=\,\, -  \langle \Acal_{|u(t)|} u(t) , u^{\prime}(t) \rangle + \gamma_{u(t)} \,  \langle \Acal_{|u(t)|} \Acalinv{u(t)} u(t) , u^{\prime}(t) \rangle\\
&=& -   \langle \Acal_{|u(t)|} u(t) , u^{\prime}(t) \rangle +\gamma_{u(t)}  ( u(t) , u^{\prime}(t))_{L^2(\D)} 
\,\,=\,\, -   \langle \Acal_{|u(t)|} u(t) , u^{\prime}(t) \rangle \,\,=\,\, - \frac{\mbox{\normalfont d}}{\mbox{\normalfont d}t} E(u(t)),
\end{eqnarray*}
i.e., $E(u(t))$ is monotonically decreasing in time. In conclusion, the energy-adaptive gradient method \eqref{gradient-flow} corresponds to a time discretization of an energy-dissipative gradient flow on $\mathbb{S}$. This gives an alternative motivation for the numerical method.
\end{remark}

\subsection{Main convergence results}
In the following we present two types of main results. The first one guarantees that the scheme is globally energy-diminishing, which in turns implies global convergence of the densities $|u^n|$ to, at least, a critical point of the Gross--Pitaevskii functional. The second results quantifies the local convergence behavior in neighborhoods of ground states.
\begin{theorem}[Global energy-dissipation]
\label{global_convergence_theorem}
Assume \ref{A1}-\ref{A4} and let $\tau_{\mbox{\rm\tiny min}} >0$ be a sufficiently small lower bound for the step sizes (to exclude $\tau_n$ degenerating to zero). Then, there exists an upper bound $ \tau_{\mbox{\rm\tiny max}}>\tau_{\mbox{\rm\tiny min}}$, such that for any $\tau_n \in [\tau_{\mbox{\rm\tiny min}} ,\tau_{\mbox{\rm\tiny max}}]$, the iterations $\{u^n \}_{n \in \mathbb{N}}$ generated by Riemannian gradient method \eqref{method} have the following properties:
\begin{itemize}
\item[(i)] It holds $E(u^n)-E(u^{n+1}) \geq C_{\tau} \, \|u^{n+1} - u^n \|^2_{\1}$ for all $n\ge 0$ and a constant $C_{\tau}>0$ (that depends on $\tau_{\mbox{\rm\tiny min}}$, $ \tau_{\mbox{\rm\tiny max}}$ and which fulfills $C_{\tau}\ge \tfrac{1}{2}$ for $\tau_n$ small enough).
\item[(ii)] There exists a limit energy $\boldsymbol{E}:=\lim\limits_{n \rightarrow \infty} E(u^n)$.
\item[(iii)] There is a subsequence $\{ u^{n_j} \}_{j \in \mathbb{N}}$ of $\{ u^n \}_{n \in \mathbb{N}}$ and $u \in \mathbb{S}$ such that 
$\lim\limits_{j\rightarrow \infty} \| u^{n_j} - u\|_{H^1(\D)}=0$.
Furthermore, $u$ is an eigenfunction to the GPEVP \eqref{eigenvalue-E-derivative-form}, i.e., 
it holds 
\begin{align*}
 E^{\prime}(u)= \lambda \, \mathcal{I} u
\qquad \mbox{for} \qquad \lm := (u, \Acalinv{u} u )^{-1}_{\0} = \lim_{j \rightarrow \infty} \gamma^{n_j}.\\[-2.3em]
\end{align*}
\item[(iv)] If any of the limits $u$ in (iii) is a locally quasi-unique ground state to \eqref{minimization-problem}, then the whole sequence of densities $|u^n|^2$ converges to the corresponding ground state density $|u|^2$ with
\begin{align*}
\lim_{n\rightarrow \infty} \| \, |u^{n}|^2 - |u|^2 \,\|_{L^2(\D)} = 0 
\qquad
\mbox{and}
\qquad
\lim_{n\rightarrow \infty} \| \, |u^{n}| - |u| \,\|_{H^1(\D)} = 0.
\end{align*}
\end{itemize}
\end{theorem}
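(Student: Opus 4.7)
The strategy is to reduce everything to the dissipation estimate (i); parts (ii)--(iv) then follow from monotonicity, compactness, and (for (iv)) the local convergence analysis of Section~\ref{section-local-convergence-proof}. For (i), my starting point is to freeze the density $|u^n|^2$ and to rewrite the energy through the quadratic form of $\Acal_{|u^n|}$. Completing the square in the quartic term of $E$ yields the exact identity
\begin{align*}
E(v) - E(u^n) \;=\; \tfrac{1}{2}\bigl(\langle \Acal_{|u^n|} v, v\rangle - \langle \Acal_{|u^n|} u^n, u^n\rangle\bigr) + \tfrac{\beta}{4}\int_\D \bigl(|v|^2 - |u^n|^2\bigr)^2 \dx,
\end{align*}
which I will apply at $v = u^{n+1}$.

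Set $z^n := \Acalinv{u^n} u^n$, $p^n := \gamma^n z^n - u^n \in \tangentspace{u^n}$, and $N^n := \|u^n + \tau_n p^n\|_{L^2(\D)}$, so that the iteration reads $u^{n+1} = (u^n + \tau_n p^n)/N^n$ with $(N^n)^2 = 1 + \tau_n^2\|p^n\|^2_{L^2(\D)}$. Using $\langle \Acal_{|u^n|} z^n, u^n\rangle = 1$ and the definition of $\gamma^n$, I expect to derive the key algebraic cancellation $\langle \Acal_{|u^n|} u^n, p^n\rangle = -\langle \Acal_{|u^n|} p^n, p^n\rangle$; plugging this into the identity above yields a principal dissipative contribution of order $\tau_n(2-\tau_n)\langle \Acal_{|u^n|} p^n, p^n\rangle/(N^n)^2$. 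The nonnegative quartic error $\tfrac{\beta}{4}\int(|u^{n+1}|^2 - |u^n|^2)^2\dx$ is the only term working against dissipation; by Sobolev embedding it is bounded by $C\tau_n^2\|p^n\|^2_{H^1(\D)}$, with $C$ uniform in $n$ thanks to the a priori $H^1$-bound on $\{u^n\}$ following from $E(u^n) \leq E(u^0)$ and $E(v) \geq c\|v\|^2_{H^1(\D)}$ (by norm equivalence of $\|\cdot\|_{\sR}$ and $\|\cdot\|_{H^1(\D)}$). Choosing $\tau_{\mbox{\rm\tiny max}}$ strictly below $2$ and small enough to absorb the quartic error, and combining ellipticity $\langle \Acal_{|u^n|} p^n, p^n\rangle \gtrsim \|p^n\|^2_{H^1(\D)}$ with $\|u^{n+1} - u^n\|^2_{H^1(\D)} \lesssim \tau_n^2 \|p^n\|^2_{H^1(\D)}$, the desired dissipation inequality follows, with $C_\tau \to \tfrac{1}{2}$ as $\tau_n \to 0$.

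Part (ii) is immediate from monotonicity and $E \geq 0$ on $\mathbb{S}$. For (iii), telescoping (i) gives $\sum_n \|u^{n+1} - u^n\|^2_{H^1(\D)} < \infty$ and hence $\|p^n\|_{H^1(\D)} \to 0$ once $\tau_n \geq \tau_{\mbox{\rm\tiny min}} > 0$ is inserted into the normalization formula. The sequence is $H^1$-bounded, so Rellich--Kondrachov provides a subsequence with $u^{n_j} \rightharpoonup u$ in $H^1(\D)$ and strongly in $L^p(\D)$ for $p < 2^*$; the strong $L^p$-convergence of $|u^{n_j}|^2$ lets me pass to the limit in $\langle \Acal_{|u^{n_j}|} z^{n_j}, v\rangle = (u^{n_j}, v)_{L^2(\D)}$, obtaining $z^{n_j} \to z$ in $H^1$, and combined with $\gamma^{n_j} z^{n_j} - u^{n_j} \to 0$ in $H^1$ this both upgrades the convergence of $u^{n_j}$ to strong $H^1$ and identifies the limit relation $\Acal_{|u|} u = \gamma\,\Ical u$. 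For (iv), once $u^{n_j} \to u$ in $H^1$ with $u$ quasi-isolated, the iterates eventually enter the local basin around the phase orbit of $u$, and the local convergence result of Section~\ref{section-local-convergence-proof} forces $|u^n|^2 \to |u|^2$; quasi-isolation combined with $\|u^{n+1} - u^n\|_{H^1(\D)} \to 0$ rules out escape of the iterates or accumulation at a different density. The principal difficulty lies entirely in part (i): balancing the $L^2$-normalization against the nonconvex quartic term to establish the precise threshold $\tau_{\mbox{\rm\tiny max}}$ at which the $\tau_n(2-\tau_n)$ dissipation still dominates the $O(\tau_n^2)$ quartic error, with constants uniform in $n$.
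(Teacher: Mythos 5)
Parts (i)--(iii) of your proposal are essentially the paper's own argument. Your completed-square identity, the orthogonality $(u^n,p^n)_{L^2(\D)}=0$, and the cancellation $\langle \Acal_{|u^n|}u^n,p^n\rangle=-\langle \Acal_{|u^n|}p^n,p^n\rangle$ are exactly the identities the paper uses; you merely fold the $L^2$-normalization directly into the energy identity, whereas the paper first proves dissipation for the unnormalized iterate $\hat{u}^{n+1}$ and then uses the mass-growth lemma to get $E(u^{n+1})\le E(\hat{u}^{n+1})$ -- both are fine. Two small points: the uniform constant in your quartic bound relies on $E(u^n)\le E(u^0)$, which is precisely the monotonicity being proved, so the estimate must be organized as an induction over $n$ (as the paper does); and the constant in (i) in fact blows up as $\tau_n\to 0$ rather than tending to $\tfrac12$, but since only $C_\tau\ge\tfrac12$ is claimed this is harmless. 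In (iii), your route to strong $H^1$-convergence (strong $H^1$-convergence of $z^{n_j}=\Acalinv{u^{n_j}}u^{n_j}$ via coercivity of $\Acal_{|u|}$ and H\"older/Sobolev bounds on $|u^{n_j}|^2-|u|^2$, combined with $\|p^{n_j}\|_{H^1(\D)}\to0$ and convergence of $\gamma^{n_j}$) differs from the paper, which instead shows $E(u^{n_j})\to E(u)$ and upgrades weak to strong convergence; your variant is correct and arguably more direct.

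The genuine gap is part (iv). You propose to conclude by invoking the local convergence result of Section \ref{section-local-convergence-proof}, but that theorem is proved only for a constant step size $\tau_n=\tau$ satisfying the spectral-radius conditions ($\tau<\tfrac{2\lambda_2}{\lambda_1+\lambda_2}$ if $\mu_1>0$, resp. $\tau<\tfrac{2}{1+|\mu_1|}$ if $\mu_1<0$), whereas Theorem \ref{global_convergence_theorem} admits arbitrary variable step sizes $\tau_n\in[\tau_{\mbox{\rm\tiny min}},\tau_{\mbox{\rm\tiny max}}]$ with $\tau_{\mbox{\rm\tiny max}}$ fixed by energy dissipation alone; such $\tau_n$ may violate the contraction thresholds, so the local theorem cannot be cited, and statement (iv) claims no rate anyway -- it is a purely global assertion. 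Your fallback sentence (``quasi-isolation plus $\|u^{n+1}-u^n\|_{\1}\to0$ rules out escape'') names the right ingredients but is not an argument, in particular it does not address that the ground states near $u$ form a whole phase orbit. What is actually needed (and what the paper does) is: every accumulation point of $\{u^n\}$ has energy $\boldsymbol{E}=E(u)$ and is therefore itself a ground state; passing to equivalence classes $[v]=\{\exp(\ci\phase)v\}$ with the distance $\mathrm{dist}([v],[w])$, quasi-isolation gives a positive distance $2\delta$ from $[u]$ to all other accumulation orbits; only finitely many iterates can satisfy $\delta<\mathrm{dist}([u],[u^n])<\delta+\eps$, since otherwise a subsequence of them would (by the compactness argument of (iii)) generate an accumulation orbit inside that annulus, a contradiction; combined with $\mathrm{dist}([u^{n+1}],[u^n])\to0$ the tail of the sequence is trapped in the $\delta$-ball around $[u]$, hence $[u^n]\to[u]$, and the diamagnetic inequality transfers this to $\| |u^n|-|u| \|_{H^1(\D)}\to0$ and $\| |u^n|^2-|u|^2\|_{L^2(\D)}\to0$. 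Without this quotient-space trapping argument (or an equivalent), part (iv) remains unproved.
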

The proof of the theorem is postponed to Section \ref{section-global-convergence}.

When reflecting on the results of Theorem \ref{global_convergence_theorem}, recall that locally quasi-unique ground states are only (locally) unique up to multiplicative phase shifts $\exp(\ci \phase)$. Since the iterations \eqref{method} do not lock the phase (with respect to some reference ground state), it is open if the phase of the iterates $u^n$ can potentially \quotes{circulate forever}, which would imply that the full sequence $u^n$ is not converging (but only up to subsequences). However, since the phase is eliminated in the density $|u^n|$ (which we recall as the physical quantity of interest), convergence of the full sequence can be guaranteed now, cf. Theorem \ref{global_convergence_theorem} (iv). This aspect is again resembled in the local convergence result in Theorem \ref{linear_rate_theorem} where convergence is established up to phase shifts (or for the density itself). Before we present the result, we need to introduce a weighted eigenvalue problem which will allow us to characterize the local convergence rates. As we will see in Section \ref{section-local-convergence-proof}, the weighted eigenvalue problem is obtained after interpreting \eqref{method} as fixed-point iteration $\phi_{\tau}(u)=u$ and by deriving a characterization of the spectrum of $\phi'_{\tau}(u)$.
\begin{lemma}\label{lemma-weighted-eigenvalueproblem}
Assume \ref{A1}-\ref{A4} and let $u \in \mathbb{S}$ be a locally quasi-unique ground state with corresponding ground state eigenvalue $\lambda$. For that, consider the weighted linear eigenvalue problem seeking $v_i \in \5$ with $\| v_i \|_{L^2(\D)}=1$ and $\mu_i \in \mathbb{R}$ (for $i \in \mathbb{N}$) such that
\begin{align}
\label{weighted-evp1}
( \lm v_i - 2\beta \re(u \overline{v_i} ) u ,w )_{L^2(\D)} = \mu_i \, \langle \Acal_{|u|} v_i , w \rangle \qquad \mbox{for all } w\in  \5.
\end{align}
Then, for all eigenvalues $\mu_i$, it holds
\begin{align}
\label{boundes_mu_i}
- 1 < \frac{-\lambda_1}{\lambda_1+\delta_1} \le \mu_i \le \frac{\lambda_1}{\lambda_2} < 1,
\end{align}
 where $\lambda_1 = \lambda$ is the smallest and $\lambda_2$ the second smallest eigenvalue of $E^{\prime\prime}(u)\vert_{\tangentspace{u}}$ (cf. \eqref{spectrum-E''u}) and $\delta_1>0$ is the smallest eigenvalue of $\Acal_{0}$ (i.e. the operator $\Acal_{|v|}$ for $|v|=0$).
\end{lemma}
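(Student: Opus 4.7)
My first step would be to symmetrize \eqref{weighted-evp1}. Substituting the identity $\langle E''(u)v,w\rangle = \langle\mathcal{A}_{|u|}v,w\rangle + 2\beta(\re(u\overline{v})u,w)_{L^2(\D)}$ from \eqref{sd} to eliminate the term $2\beta\re(u\overline{v_i})u$ on the left of \eqref{weighted-evp1} yields the equivalent formulation
\begin{align*}
\langle E''(u)v_i,w\rangle - \lambda\,(v_i,w)_{L^2(\D)} \;=\; (1-\mu_i)\,\langle\mathcal{A}_{|u|}v_i,w\rangle \qquad \text{for all } w \in \5.
\end{align*}
By Lemma \ref{coercive_lemma} the left-hand bilinear form is symmetric and coercive on $\5$, while $\langle\mathcal{A}_{|u|}\cdot,\cdot\rangle$ is symmetric and coercive on all of $\3$. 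Setting $\nu_i := 1-\mu_i$ thus yields a generalized symmetric-definite eigenvalue problem, so in particular all $\mu_i$ are automatically real.

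The key step is then to test this equation against $w = v_i$ with $\|v_i\|_{L^2(\D)} = 1$ and to introduce the abbreviations $R_i := \langle E''(u)v_i,v_i\rangle$ and $S_i := 2\beta\int_{\D}(\re(u\overline{v_i}))^2\dx$. A simple rearrangement then gives $\mu_i = (\lambda - S_i)/(R_i - S_i)$, where $R_i - S_i = \langle\mathcal{A}_{|u|}v_i,v_i\rangle > 0$ by ellipticity. Two ingredients drive the bounds in \eqref{boundes_mu_i}: first, the Rayleigh-type estimate $R_i \geq \lambda_2$, which is precisely the inequality \eqref{positive} established inside the proof of Lemma \ref{coercive_lemma}; and second, the pointwise comparison $|\re(u\overline{v_i})|^2 \leq |u|^2|v_i|^2$, which combined with $\langle\mathcal{A}_{|u|}v_i,v_i\rangle \geq \beta\int_{\D}|u|^2|v_i|^2\dx$ (from \eqref{energy_function-clone}) yields the sharpening $S_i \leq 2(R_i - S_i)$.

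With these in place, both inequalities in \eqref{boundes_mu_i} reduce to elementary algebra. For the upper bound I would compute $\lambda_1/\lambda_2 - \mu_i$ and observe that its numerator equals $\lambda_1(R_i - \lambda_2) + S_i(\lambda_2 - \lambda_1)$, which is non-negative by $R_i \geq \lambda_2$ and $\lambda_2 > \lambda_1$ (the latter being supplied by quasi-isolation via Lemma \ref{coercive_lemma}); the strict inequality $\lambda_1/\lambda_2 < 1$ then falls out for free. For the lower bound, one has $\mu_i + 2 = (\lambda + 2R_i - 3S_i)/(R_i - S_i)$, and $3S_i \leq 2R_i$ forces $\mu_i + 2 \geq \lambda/(R_i - S_i) > 0$. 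The main obstacle I anticipate is pinning down the sharpening $S_i \leq 2(R_i - S_i)$: the factor $2$ there is exactly what translates into the constant $-2$ in the spectral bound, and it depends crucially on $S_i$ involving $\re(u\overline{v_i})$ rather than $|u\overline{v_i}|$, since a cruder estimate would lose this specific factor and the lower bound along with it.
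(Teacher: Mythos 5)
Your proposal is correct and follows essentially the same route as the paper: after testing \eqref{weighted-evp1} against $w=v_i$, both proofs rely on the positivity \eqref{positive} (i.e., $\langle E''(u)v_i,v_i\rangle\ge\lambda_2$ on $\tangentspace{u}\cap\tangentspace{\ci u}$) for the upper bound and on the pointwise estimate $|\re(u\overline{v_i})|^2\le|u|^2|v_i|^2$ together with $\langle\Acal_{|u|}v_i,v_i\rangle\ge\beta\int|u|^2|v_i|^2\dx$ for the lower bound. The only difference is cosmetic: you package the Rayleigh quotient into the closed form $\mu_i=(\lambda-S_i)/(R_i-S_i)$ and then do the algebra on the numerators, whereas the paper manipulates the quotient $1-\mu_i$ directly; the ingredients and their roles are identical.
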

\begin{proof}
Noting that 
$$
\langle \Acal_{|u|} v_i , v_i  \rangle - \langle (E''(u)-\lm \mathcal{I}) v_i , v_i \rangle = \lm - 2\beta ( \re(u \overline{v_i} ) u ,v_i )_{L^2(\D)}
$$ 
and testing in \eqref{weighted-evp1} with $w=v_i$ yields
\begin{align*}
1-\mu_i = \tfrac{\langle (E''(u)-\lm \mathcal{I}) v_i , v_i \rangle }{ \langle \Acal_{|u|} v_i , v_i \rangle} \overset{\eqref{coercivity-secE-lambda}}{>}0.
\end{align*}
This directly implies $\mu_i < 1$ for all $i \in \mathbb{N}$. This bound can now be sharpened as
\begin{align*}
\inf_{i \in \mathbb{N}} \,(1 - \mu_i ) 
&\,=\, \inf_{i \in \mathbb{N}}  \frac{\langle (E''(u)-\lm \mathcal{I}) v_i , v_i \rangle }{ \langle \Acal_{|u|} v_i , v_i \rangle} 
\, \ge \, \inf_{i \in \mathbb{N}}  \frac{\langle (E''(u)-\lm \mathcal{I}) v_i , v_i \rangle }{\langle E''(u) v_i , v_i \rangle }
\,=\, \inf_{i \in \mathbb{N}} \left( 1 - \lm \frac{ ( v_i , v_i )_{L^2(\D)} }{\langle E''(u) v_i , v_i \rangle }  \right) \\
&\,=\, 1 -  \lm  \sup_{i \in \mathbb{N}} \frac{ ( v_i , v_i )_{L^2(\D)} }{\langle E''(u) v_i , v_i \rangle } 
\,=\, 1 -  \lm  \sup_{w \in \5} \frac{ ( w , w )_{L^2(\D)} }{\langle E''(u) w , w \rangle } 
\,=\, 1 - \frac{  \lm }{ \lambda_2 } .
\end{align*}
In the last step we used the min-max principle for linear self-adjoint operators, recalling that $\ci u \in \tangentspace{u}$ is an eigenfunction of $E^{\prime\prime}(u)\vert_{\tangentspace{u}}$ to the smallest eigenvalue $\lm_1=\lm$, and therefore $\lambda_2^{-1}=\sup\limits_{w \in \5} \frac{ ( w , w )_{L^2(\D)} }{\langle E''(u) w , w \rangle }$, with $\lambda_2$ being the second smallest eigenvalue of $E^{\prime\prime}(u)\vert_{\tangentspace{u}}$.

It remains to check that $-1$ is a lower bound for $\mu_i$. For this, we first need to establish the pointwise inequality $\lambda \ge \beta |u|^2$ as done in \cite[Lemma 3.6]{PH24} for $\Omega=0$. Here, $\lambda$ denotes the ground state eigenvalue to the ground state $u$. Without loss of generality we assume that $V \in C^{\infty}(\overline{\D})$ such that $u \in C^2(\D)$ (the general result for $V\in L^{\infty}(\D)$ follows by density arguments, cf. \cite[Lemma 3.6]{PH24}). Since $|u|^2\vert_{\partial \D} =0$ and $\| u\|_{L^2(\D)}=1$, we know that $|u|^2$ has an interior maximum $|u^{\ast}|^2=|u(x^{\ast})|^2>0$ for some point $x^{\ast} \in \D$. Due to the properties of an interior maximum, it holds $-\Delta |u^{\ast}|^2 := -\Delta |u(x)|^2\vert_{x=x^{\ast}} \ge 0$. From the GPEVP \eqref{eigen_value_problem_1}, we have 
\begin{align*}
\lambda \, u^{\ast} = -\Delta u^{\ast} + V(x^{\ast})\, u^{\ast} - \Omega \, \mathcal{L}_{3} u^{\ast}\vert_{x^{\ast}} + \beta \, |u^{\ast}|^2 u^{\ast}.
\end{align*}
Multiplying equality with $\overline{u^{\ast}}$ and afterwards the complex conjugate of the equality with $u^{\ast}$ yields, respectively
\begin{align*}
 \lambda \, |u^{\ast}|^2 &= - \overline{u^{\ast}} \Delta u^{\ast} + V(x^{\ast})\, |u^{\ast}|^2 - \Omega \, \overline{u^{\ast}}\, \mathcal{L}_{3} u^{\ast}\vert_{x^{\ast}} + \beta \, |u^{\ast}|^4,\\
 \lambda \, |u^{\ast}|^2 &= - u^{\ast} \overline{\Delta u^{\ast}} + V(x^{\ast})\, |u^{\ast}|^2 - \Omega \, u^{\ast}\, \overline{\mathcal{L}_{3} u^{\ast}\vert_{x^{\ast}}} + \beta \, |u^{\ast}|^4.
\end{align*}
Summing up the two equations and using $2 \re(a \overline{b}) = a \overline{b} + \overline{a} b$ yields
\begin{align}
\label{eqn-uast}
 \lambda \, |u^{\ast}|^2 &= - \re( \overline{u^{\ast}} \Delta u^{\ast}) + V(x^{\ast})\, |u^{\ast}|^2 - \Omega \re( \overline{u^{\ast}}\, \mathcal{L}_{3} u^{\ast}\vert_{x^{\ast}}) + \beta \, |u^{\ast}|^4.
\end{align}
Next, note that
\begin{align*}
-\Delta |u^{\ast}|^2 = - 2 \re( \overline{u}\, \Delta u )  - 2 |\nabla u|^2
\quad
\mbox{and}
\quad |\Omega \, \re( \overline{u^{\ast}}\, \mathcal{L}_{3} u^{\ast}\vert_{x^{\ast}})| \le \tfrac{\Omega^2}{4}  (|x_1^{\ast}|^2+|x_2^{\ast}|^2) |u^{\ast}|^2 + |\nabla u^{\ast}|^2.
\end{align*}
Hence, together with $-\Delta |u^{\ast}|^2\ge 0$, we obtain from \eqref{eqn-uast}
\begin{align*}
 \lambda \, |u^{\ast}|^2 &= - \tfrac{1}{2} \Delta |u^{\ast}|^2 + |\nabla u^{\ast}|^2 + V(x^{\ast})\, |u^{\ast}|^2 - \Omega \re( \overline{u^{\ast}}\, \mathcal{L}_{3} u^{\ast}\vert_{x^{\ast}}) + \beta \, |u^{\ast}|^4,\\
 &\ge \left(  V(x^{\ast})  - \tfrac{\Omega^2}{4}  (|x_1^{\ast}|^2+|x_2^{\ast}|^2) \right) |u^{\ast}|^2 \,+\, \beta \, |u^{\ast}|^4 \\
 &\overset{\ref{A4}}{\ge} K \tfrac{\Omega^2}{4}  (|x_1^{\ast}|^2+|x_2^{\ast}|^2)  |u^{\ast}|^2  \,+\, \beta \, |u^{\ast}|^4 \,\, \ge \,\,  \beta \, |u^{\ast}|^4.
\end{align*}
Since $|u^{\ast}|^2 = \| u \|_{L^{\infty}(\D)}^2>0$, we obtain $\lambda \ge   \beta \| u \|_{L^{\infty}(\D)}^2$ as desired. We are now ready to prove the lower bound, which we obtain as
\begin{eqnarray*}
\lefteqn{1-\mu_i = \frac{\langle (E''(u)-\lm \mathcal{I}) v_i , v_i \rangle }{ \langle \Acal_{|u|} v_i , v_i \rangle} 
\le \frac{\langle (E''(u) v_i , v_i \rangle - \beta   \| u \|_{L^{\infty}(\D)}^2}{ \langle \Acal_{|u|} v_i , v_i \rangle} }\\
&\le& 1 + \frac{ 2\, \beta \| \re(u \overline{v_i})\|^2_{\0} - \beta   \| u \|_{L^{\infty}(\D)}^2 }{ \|v_i\|_{\sR}^2 + \beta \|u \overline{v_i}\|_{\0}^2}
\,\,\le\,\, 1 + \frac{ \beta \| u \overline{v_i} \|^2_{\0} }{ \|v_i\|_{\sR}^2 + \beta \|u \overline{v_i}\|_{\0}^2}
\\
& \le& 1 + \frac{ \beta \| u \overline{v_i} \|^2_{\0} }{ \delta_1 \|v_i\|_{L^2(\D)}^2 + \beta \|u \overline{v_i}\|_{\0}^2}
\,\, \le \,\, 1 + \frac{ \beta \| u \overline{v_i} \|^2_{\0} }{ \delta_1 \tfrac{\beta}{\lambda} \| u \overline{v_i}\|^2_{\0} + \beta \|u \overline{v_i}\|_{\0}^2}
\,\, = \,\,  1 + \frac{\lambda}{\delta_1 + \lambda} < 2,
\end{eqnarray*}
where we used in the penultimate step that
\begin{align*}
\tfrac{\beta}{\lambda} \| u \overline{v_i}\|^2_{\0} \le \tfrac{\beta}{\lambda} \| u\|^2_{L^{\infty}(\D)} \le 1 =  \|v_i\|_{L^2(\D)}^2.
\end{align*}
\end{proof}

With the above result, we can now quantify the local convergence, where we find that the density $|u^n|^2$ converges locally with linear rate to $|u|^2$ in $L^2(\D)$ and $u^n$ converges locally linear in $H^1(\D)$ to $u$, up to a constant phase factor.
\begin{theorem}[Local convergence]
\label{linear_rate_theorem}
Assume \ref{A1}-\ref{A4} and let $u \in \mathbb{S}$ denote a locally quasi-unique ground state to \eqref{minimization-problem}. We consider the iterations $\{u^n \}_{n \in \mathbb{N}}$ of the Riemannian gradient method \eqref{method} for a uniform step size $\tau_n = \tau >0$. For $\mu_i$ denoting the eigenvalues to \eqref{weighted-evp1}, the contraction constant fulfills
\begin{align*}
\rho^{\ast} := \max_{i \in \mathbb{N} }\, | (1-\tau) + \tau \mu_i | \,<\, 1 \qquad \mbox{for all } 0 < \tau \le 1. 
\end{align*}
More precisely, if $\mu_1$ denotes the maximum eigenvalue in magnitude to \eqref{weighted-evp1}, then the following is true:
\begin{itemize}
\item If $\mu_1>0$, it holds 
$\rho^{\ast} < 1$ for all $0<\tau < 1 + \frac{\lambda_2-\lambda_1}{\lambda_2+\lambda_1}$ and we have $|\mu_1| \le \frac{\lambda_1}{\lambda_2}<1$. 
\item If $\mu_1<0$, it holds $\rho^{\ast} < 1$ for all 
$0<\tau <  1 + \frac{\delta_1}{2\lambda_1+\delta_1}$ and we have $|\mu_1| \le \tfrac{\lambda_1}{\lambda_1+\delta_1}<1$
\end{itemize}
where $\delta_1>0$ is as in Lemma \ref{lemma-weighted-eigenvalueproblem}.
Furthermore, for each $\eps>0$, there is an $H^1$-neighborhood $S_{\eps}$ of $u$ on $\mathbb{S}$ and a constant $C_{\eps}>0$ such that for all starting values $u^0 \in S_{\eps}$ we have locally linear convergence for the density
\begin{align*}
\|  |u^{n}|^2 - |u|^2 \|_{L^2(\D)} &\le C_{\eps} \hspace{2pt}\, |\rho^* + \eps|^{n} \,\hspace{2pt} \| u^0 - u \|_{\1} \, 
\end{align*}
and locally linear convergence up to phase shifts for $u^n$
\begin{align}
\label{rate-linear-standard}
\inf_{\phase_n \in [-\pi,\pi)} 
\|  u^{n}- \exp(\ci \phase_n) \, u \|_{\1} &\le C_{\eps} \hspace{2pt}\, |\rho^* + \eps|^{n} \,\hspace{2pt} \| u^0 - u \|_{\1}.
\end{align}
Recall here that $ \exp(\ci \phase_n) \, u$ is a ground state that is equivalent to $u$ in the sense that they have identical ground state densities $|u|^2$ and the only difference is a constant phase shift.
 \end{theorem}
Theorem \ref{linear_rate_theorem} implies guaranteed local convergence for any $\tau \le 1$. Note that in particular in combination with \eqref{boundes_mu_i}, the choice $\tau=1$ yields convergence of the inverse iteration method $u^{n+1} = \tfrac{ \Acalinv{u^n}u^n }{\| \Acalinv{u^n}u^n \|_{\0}}$, which converges, if $\mu_1>0$, with the rate
\begin{align}
\label{convergence-inverse-iteration}
\inf_{\phase_n \in [-\pi,\pi)} \|  u^{n}- \exp(\ci \phase_n) \, u \|_{\1} &\le C \hspace{2pt}\, |\tfrac{\lambda_1}{\lambda_2} + \eps  |^{n} \,\hspace{2pt} \| u^0 - u \|_{\1}.
\end{align}
For linear problems, this estimate coincides with the well-known result that the convergence of the inverse iteration depends on the size of the first spectral gap.

\begin{remark}[Generalization to local minimizers]
Unlike the non-rotating case (cf. \cite{PH24}), where the analysis depends on the fact that the ground state eigenvalue is the smallest eigenvalue of the linearized Gross--Pitaevskii operator $\mathcal{A}_{|u|}$ (which is no longer true in the rotating setting), our analysis only exploits the properties of the spectrum of $E^{\prime\prime}(u)\vert_{\tangentspace{u}}$. Since any local minimizer $\hat{u}$ (that is locally quasi-unique) also satisfies the sufficient second-order optimality conditions, all results of Theorem \ref{linear_rate_theorem} remain valid for quasi-unique local minimizers if we replace $E^{\prime\prime}(u)\vert_{\tangentspace{u}}$ by $E^{\prime\prime}(\hat{u})\vert_{\tangentspace{\hat{u}}}$.
\end{remark}
\section{Proof of Theorem \ref{global_convergence_theorem}: Global convergence results}
\label{section-global-convergence}
In this section we prove Theorem \ref{global_convergence_theorem}, where we exploit similar arguments as in \cite{HeP20} for $\Omega=0$. For brevity, we write the approximation $u^{n+1} \in \mathbb{S}$ in \eqref{method} as $u^{n+1} = \tfrac{\hat{u}^{n+1}}{\|\hat{u}^{n+1}\|_{\0}}$, where $\hat{u}^{n+1} := (1-\tau_n) u^n + \tau_n \gamma^n \Acalinv{u^n}u^n$.
\subsection{Global energy dissipation}
We start with proving the global energy dissipation. The first step is to see that the preliminary iterates $\hat{u}^{n+1}$ increase the $L^2$-norm and hence $E(u^{n+1}) \le E(\hat{u}^{n+1})$ after normalization.
\begin{lemma}
\label{lemma-inter-mass-growth}
Assume \ref{A1}-\ref{A4}. For all $n\ge 0$ and $u^n\in \mathbb{S}$ the $L^2$-norm grows with
\begin{align}
\label{normalization-error}
 \| \hat{u}^{n+1} \|_{\0} - \| u^n \|_{\0} 
= \frac{( u^n - u^{n+1} , u^n)_{\0}}{ ( u^{n+1} , u^n)_{\0}}  > 0.
\end{align}
Furthermore, we have $\| \hat{u}^{n+1} \|_{\0}= \| u^{n} \|_{\0}$ if and only if $u^n = \hat{u}^{n+1}$.
\end{lemma}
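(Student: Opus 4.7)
The plan is to exploit the fact that the scheme was built so that $\gamma^n (\Acalinv{u^n}u^n, u^n)_{\0} = 1$, which should make the $L^2$-inner product $(\hat{u}^{n+1}, u^n)_{\0}$ collapse to a very simple expression. Specifically, using the definition of $\hat{u}^{n+1}$ and $\gamma^n = (u^n, \Acalinv{u^n}u^n)_{\0}^{-1}$ together with $\|u^n\|_{\0}=1$, I would expand
\begin{align*}
(\hat{u}^{n+1}, u^n)_{\0}
&= (1-\tau_n)\,(u^n,u^n)_{\0} + \tau_n \gamma^n (\Acalinv{u^n}u^n, u^n)_{\0}
= (1-\tau_n) + \tau_n = 1.
\end{align*}

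Next, since $u^{n+1} = \hat{u}^{n+1}/\|\hat{u}^{n+1}\|_{\0}$, the identity above rewrites as $\|\hat{u}^{n+1}\|_{\0}\,(u^{n+1}, u^n)_{\0} = 1$, so that $\|\hat{u}^{n+1}\|_{\0} = 1/(u^{n+1},u^n)_{\0}$. Subtracting $\|u^n\|_{\0}=1$ and using $1 = (u^n,u^n)_{\0}$ in the numerator gives
\begin{align*}
\|\hat{u}^{n+1}\|_{\0} - \|u^n\|_{\0}
= \frac{1 - (u^{n+1},u^n)_{\0}}{(u^{n+1},u^n)_{\0}}
= \frac{(u^n - u^{n+1}, u^n)_{\0}}{(u^{n+1},u^n)_{\0}},
\end{align*}
which is exactly the claimed identity \eqref{normalization-error}.

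For the strict positivity and the equality case, I would apply Cauchy--Schwarz to $u^n, u^{n+1} \in \mathbb{S}$. The computation $(\hat{u}^{n+1}, u^n)_{\0} = 1 > 0$ forces $(u^{n+1}, u^n)_{\0} > 0$, so the denominator is positive. On the other hand, Cauchy--Schwarz yields $(u^{n+1}, u^n)_{\0} \le \|u^{n+1}\|_{\0}\|u^n\|_{\0} = 1$, so the numerator is nonnegative, giving positivity of the difference. Equality $\|\hat{u}^{n+1}\|_{\0} = \|u^n\|_{\0} = 1$ corresponds to equality in Cauchy--Schwarz, which (together with positivity and unit norms) forces $u^{n+1}=u^n$; this in turn is equivalent to $\hat{u}^{n+1}=u^n$ since in that case $\|\hat{u}^{n+1}\|_{\0}=1$ and $\hat{u}^{n+1}=\|\hat{u}^{n+1}\|_{\0}u^{n+1}=u^n$.

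There is no real obstacle here: the whole argument hinges on the clean observation $(\hat{u}^{n+1}, u^n)_{\0} = 1$, which is exactly the algebraic manifestation of the design principle $\nabla_X E(u^n)=u^n$ behind the energy-adaptive metric. Everything else is a one-line Cauchy--Schwarz and algebraic rearrangement.
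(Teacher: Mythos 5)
Your proof is correct and takes essentially the same route as the paper: your key identity $(\hat{u}^{n+1},u^n)_{\0}=1$ is exactly the paper's orthogonality relation $(\hat{u}^{n+1}-u^n,u^n)_{\0}=0$, and the rearrangement yielding \eqref{normalization-error} is identical. The only cosmetic difference is that the paper obtains $\|\hat{u}^{n+1}\|_{\0}\ge 1$ and the equality characterization from the Pythagorean identity $\|\hat{u}^{n+1}\|_{\0}^2=1+\|\hat{u}^{n+1}-u^n\|_{\0}^2$, while you use Cauchy--Schwarz on $(u^{n+1},u^n)_{\0}$ --- an equivalent argument.
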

\begin{proof}
From $\tfrac{1}{\tau_n}(\hat{u}^{n+1} - u^n)  = - u^n + \gamma^n \Acalinv{u^n}u^n$ we obtain for $u^{n}\in \mathbb{S}$
\begin{align}
\label{mass-equality-1}
\frac{1}{\tau_n} ( \hat{u}^{n+1} - u^n, u^n)_{\0} = - \| u^n \|_{\0}^2 + (u^n,\Acalinv{u^n} u^n)_{\0}^{-1} (\Acalinv{u^n}  u^n, u^n )_{\0} = 0.
\end{align}
Hence, $1=\| u^{n} \|_{\0}^2 \le  \| u^{n} \|_{\0}^2 + \| \hat{u}^{n+1} \hspace{-1pt}-\hspace{-1pt} u^n\|_{\0}^2  = \| \hat{u}^{n+1} \|_{\0}^2$.
Identity \eqref{normalization-error} follows as
\begin{align*}
\frac{( u^n , u^n)_{\0}}{ ( u^{n+1} , u^n)_{\0}}
= \| \hat{u}^{n+1} \|_{\0}  \frac{( u^n , u^n)_{\0}}{ ( \hat{u}^{n+1} , u^n)_{\0}} 
\overset{\eqref{mass-equality-1}}{=} \| \hat{u}^{n+1} \|_{\0}.
\end{align*}
\end{proof}
Next, we prove a preliminary lower bound for the energies.
\begin{lemma}
\label{energy_error}
Assume \ref{A1}-\ref{A4} and let $\tau_n \leq \tfrac{1}{2}$, then
\begin{align}
\label{energy_estimate_0}
E(u^n) - E(\hat{u}^{n+1}) \geq -  \int_{\D} \tfrac{3 \beta}{4}|\hat{u}^{n+1}- u^{n}|^4 + \bigl( \tfrac{1}{\tau_n} - \tfrac{1}{2} \bigr) \, \|\hat{u}^{n+1} - u^n\|_{\sR}^2  .
\end{align}
If $ \tau_n \geq 2$, the iterates diverge, that is, either $E(\hat{u}^{n+1}) > E(u^n)$ or $u^n=\hat{u}^{n+1}$.
\end{lemma}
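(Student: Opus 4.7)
The plan is to expand $E(u^n) - E(\hat u^{n+1})$ around $u^n$ with increment $h := \hat u^{n+1} - u^n = \tau_n(-u^n + \gamma^n \Acalinv{u^n} u^n)$. Via the representation \eqref{energy_function-clone}, the quadratic part of $E$ contributes $-(u^n,h)_{\sR} - \tfrac{1}{2}\|h\|_{\sR}^2$, while expanding $|u^n+h|^4 = (|u^n|^2 + 2\re(u^n\bar{h}) + |h|^2)^2$ produces the five contributions $4|u^n|^2\re(u^n\bar{h})$, $4(\re(u^n\bar{h}))^2$, $4\re(u^n\bar{h})|h|^2$, $2|u^n|^2|h|^2$, and $|h|^4$. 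All linear-in-$h$ pieces then assemble, by the definition \eqref{a_z-norm} of $\Acal_{|u^n|}$, into $-\langle \Acal_{|u^n|} u^n, h\rangle$.

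The key structural observation is the identity
$$\langle \Acal_{|u^n|} u^n, h\rangle = -\tfrac{1}{\tau_n}\langle \Acal_{|u^n|} h, h\rangle = -\tfrac{1}{\tau_n}\Bigl(\|h\|_{\sR}^2 + \beta \textstyle\int_{\D} |u^n|^2|h|^2\Bigr),$$
which follows by substituting $h = \tau_n(-u^n + \gamma^n\Acalinv{u^n}u^n)$ and invoking the $L^2$-self-adjointness of $\Acal_{|u^n|}$ (giving $\langle \Acal_{|u^n|} u^n, \Acalinv{u^n}u^n\rangle = \|u^n\|_{\0}^2 = 1$) together with $\gamma^n(u^n, \Acalinv{u^n}u^n)_{\0} = 1$. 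Plugging this into the expansion produces the master identity
$$E(u^n) - E(\hat u^{n+1}) = \bigl(\tfrac{1}{\tau_n} - \tfrac{1}{2}\bigr)\|h\|_{\sR}^2 + \beta\Bigl[\bigl(\tfrac{1}{\tau_n} - \tfrac{1}{2}\bigr)\textstyle\int_{\D}|u^n|^2|h|^2 - \int_{\D}(\re(u^n\bar{h}))^2 - \int_{\D} \re(u^n\bar{h})|h|^2 - \tfrac{1}{4}\int_{\D}|h|^4 \Bigr].$$

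From here both claims are short. For (i), subtract the target right-hand side and, using the pointwise bound $(\re(u^n\bar{h}))^2 \leq |u^n|^2|h|^2$, reduce to showing
$$\bigl(\tfrac{1}{\tau_n} - \tfrac{3}{2}\bigr)\textstyle\int_{\D}|u^n|^2|h|^2 - \int_{\D} \re(u^n\bar{h})|h|^2 + \tfrac{1}{2}\int_{\D}|h|^4 \geq 0,$$
which for $\tau_n \leq 1/2$ follows from the leading coefficient being $\geq 1/2$ combined with Young's inequality $|u^n||h|^3 \leq \tfrac{1}{2}|u^n|^2|h|^2 + \tfrac{1}{2}|h|^4$. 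For the $\tau_n \geq 2$ assertion, the same master identity, together with the perfect-square identity $(\re(u^n\bar{h}))^2 + \re(u^n\bar{h})|h|^2 + \tfrac{1}{4}|h|^4 = (\re(u^n\bar{h}) + \tfrac{1}{2}|h|^2)^2$, rearranges to
$$E(\hat u^{n+1}) - E(u^n) = \bigl(\tfrac{1}{2} - \tfrac{1}{\tau_n}\bigr)\bigl(\|h\|_{\sR}^2 + \beta\textstyle\int_{\D}|u^n|^2|h|^2\bigr) + \beta\int_{\D}\bigl(\re(u^n\bar{h}) + \tfrac{1}{2}|h|^2\bigr)^2 \geq 0,$$
with strict inequality unless $h = 0$, i.e.\ unless $u^n = \hat u^{n+1}$. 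The principal obstacle is spotting the cancellation $\langle \Acal_{|u^n|} u^n, h\rangle = -\tau_n^{-1}\langle \Acal_{|u^n|} h, h\rangle$ that produces the master identity; once that is in hand, the remainder is pointwise algebra plus Young's inequality.
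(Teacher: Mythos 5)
Your proposal is correct and follows essentially the same route as the paper: your key cancellation $\langle \Acal_{|u^n|} u^n, h\rangle = -\tau_n^{-1}\langle \Acal_{|u^n|} h, h\rangle$ is exactly the paper's identity \eqref{pseudo-energy-identity}, and your master identity is the paper's \eqref{growth_energy} in disguise, since $\bigl(\re(u^n\bar h)+\tfrac12|h|^2\bigr)^2=\tfrac14\bigl(|\hat u^{n+1}|^2-|u^n|^2\bigr)^2$. The only (cosmetic) difference is the final pointwise estimate: you use $(\re(u^n\bar h))^2\le |u^n|^2|h|^2$ plus Young's inequality, whereas the paper bounds $(|\hat u^{n+1}|+|u^n|)^2(|\hat u^{n+1}|-|u^n|)^2\le 3|h|^4+6|u^n|^2|h|^2$ and drops the nonnegative term for $\tau_n\le\tfrac12$.
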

\begin{proof}
With $\hat{u}^{n+1} = (1-\tau_n) u^n + \tau_n \gamma^n \Acalinv{u^n}u^n$ and \eqref{mass-equality-1} we have
\begin{eqnarray}
\label{pseudo-energy-identity}
\frac{1}{\tau_n} \langle \Acal_{|u^n|} (\hat{u}^{n+1} - u^n) , \hat{u}^{n+1} - u^n \rangle 
&=&
 - \langle \Acal_{|u^n|} u^n , \hat{u}^{n+1} - u^n \rangle
\end{eqnarray}
and hence
\begin{eqnarray*}
\lefteqn{ \langle \Acal_{|u^n|} (\hat{u}^{n+1} - u^n) , \hat{u}^{n+1} - u^n \rangle
\,\,\, = \,\,\,
\langle \Acal_{|u^n|} \hat{u}^{n+1}, \hat{u}^{n+1} \rangle
- 2 \langle \Acal_{|u^n|} u^{n} , \hat{u}^{n+1} - u^n \rangle
- \langle \Acal_{|u^n|} u^{n} , u^{n} \rangle } \\
&\overset{\eqref{pseudo-energy-identity}}{=}&
\langle \Acal_{|u^n|} \hat{u}^{n+1}, \hat{u}^{n+1} \rangle
+ \tfrac{2}{\tau_n}\langle \Acal_{|u^n|} (\hat{u}^{n+1} - u^{n})  , \hat{u}^{n+1} - u^n \rangle
- \langle \Acal_{|u^n|} u^{n} , u^{n} \rangle.\hspace{70pt}
\end{eqnarray*}
We conclude
\begin{eqnarray*}
\langle \Acal_{|u^n|} u^{n} , u^{n} \rangle
&=&
\langle \Acal_{|u^n|}\hat{u}^{n+1}, \hat{u}^{n+1} \rangle 
+ \bigl(\tfrac{2}{\tau_n} - 1\bigr) \langle \Acal_{|u^n|}( \hat{u}^{n+1} - u^{n} ) , \hat{u}^{n+1} - u^n \rangle.
\end{eqnarray*}
Together with 
$
\langle \Acal_{|u^n|} u^{n} , u^{n} \rangle = 2 E(u^n) + \tfrac{\beta}{2} \int_{\D} |u^{n}|^4 \dx
$
we obtain
\begin{eqnarray}
 \nonumber \lefteqn{E(u^n)  - E(\hat{u}^{n+1} ) 
\,\,\,=\,\,\,
- \tfrac{\beta}{4} \int_{\D} (|\hat{u}^{n+1}|^2 - |u^{n}|^2)^2 \dx
+ \bigl(\tfrac{1}{\tau_n} - \tfrac{1}{2}\bigr) \langle \Acal_{|u^n|} (\hat{u}^{n+1} - u^{n})  , \hat{u}^{n+1} - u^n \rangle }\\
\nonumber &=&
- \tfrac{\beta}{4} \int_{\D} (|\hat{u}^{n+1}|+ |u^{n}|)^2  (|\hat{u}^{n+1}| -| u^{n}|)^2\dx 
+ \bigl(\tfrac{1}{\tau_n} - \tfrac{1}{2}\bigr) \beta \int_{\D} |u^n|^2 |\hat{u}^{n+1} - u^{n}|^2\dx \\
 \label{growth_energy} & &+ \bigl(\tfrac{1}{\tau_n} - \tfrac{1}{2}\bigr) ( \hat{u}^{n+1} - u^{n}  , \hat{u}^{n+1} - u^n )_{\sR},
\end{eqnarray}
which is negative for $\tau_n \ge 2$ and the scheme diverges in this regime. Moreover, we get from \eqref{growth_energy} with $(|\hat{u}^{n+1}|+ |u^{n}|)^2  (|\hat{u}^{n+1}| -| u^{n}|)^2 \leq 3 |\hat{u}^{n+1}- u^{n}|^4 + 6 |\hat{u}^{n+1} - u^{n}|^2 |u^n|^2$ that
\begin{eqnarray*}
 \lefteqn{E(u^n)  - E(\hat{u}^{n+1} ) 
\,\,\,\geq\,\,\, - \tfrac{3 \beta}{4} \int_{\D} |\hat{u}^{n+1}- u^{n}|^4 \dx + \beta \bigl(\tfrac{1}{\tau_n} - 2  \bigr) \int_{\D} |u^n|^2 |\hat{u}^{n+1} - u^{n}|^2 \dx }\\
 &\enspace& \qquad+ \bigl(\tfrac{1}{\tau_n} - \tfrac{1}{2}\bigr) ( \hat{u}^{n+1} - u^{n}  , \hat{u}^{n+1} - u^n )_{\sR} \, ,\hspace{180pt}
\end{eqnarray*}
which  for $\tau_n \leq \tfrac{1}{2}$ gives  $\eqref{energy_estimate_0}$.
\end{proof}
Next, we prove that the energy is reduced in each iteration.
\begin{lemma}[Energy dissipation]
 \label{energy_reduction_lemma}
Assume \ref{A1}-\ref{A4}, then there exists $\tau_{\mbox{\rm\tiny max}}<2$ (depending on $\beta$, $E(u^0)$ and $\D$) such that for all $\tau_n \in (0, \tau_{\mbox{\rm\tiny max}})$ and some constant $C_{\tau_n} \ge \tfrac{1}{2}$ it holds
\begin{align}
\label{Energy-diff-by-H1-diff}
E(u^n) -  E(\hat{u}^{n+1})  \ge C_{\tau_n} \| u^n - \hat{u}^{n+1}  \|^2_{\sR} 
\qquad
\mbox{and hence}
\quad
E(u^{n+1})  \leq E(u^n) \, .
\end{align}
\end{lemma}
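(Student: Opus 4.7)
My starting point is the exact energy identity derived in the proof of Lemma~\ref{energy_error} (equation~\eqref{growth_energy}), which is valid for any $\tau_n\in(0,2)$:
\[
E(u^n) - E(\hat{u}^{n+1}) = -\tfrac{\beta}{4}\int_{\D}(|\hat{u}^{n+1}|^2 - |u^n|^2)^2\dx + \bigl(\tfrac{1}{\tau_n}-\tfrac{1}{2}\bigr)\langle\Acal_{|u^n|}(\hat{u}^{n+1}-u^n),\hat{u}^{n+1}-u^n\rangle.
\]
Because $\langle\Acal_{|u^n|}\cdot,\cdot\rangle\ge\|\cdot\|_{\sR}^2$, the positive contribution is at least $\bigl(\tfrac{1}{\tau_n}-\tfrac{1}{2}\bigr)\|\hat{u}^{n+1}-u^n\|_{\sR}^2$, and the whole task reduces to dominating the negative $L^4$-term by $C\|\hat{u}^{n+1}-u^n\|_{\sR}^2$ with $C$ depending only on $\beta$, $E(u^0)$ and $\D$. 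I would proceed by induction on $n$ with hypothesis $E(u^n)\le E(u^0)$.

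Under this hypothesis, combining $E(v)\ge\tfrac{1}{2}\|v\|_{\sR}^2$, the norm equivalence $\|\cdot\|_{\sR}\sim\|\cdot\|_{H^1(\D)}$, and the Sobolev embedding $H^1(\D)\hookrightarrow L^4(\D)$ (valid for $d=2,3$) gives $\|u^n\|_{H^1(\D)}+\|u^n\|_{L^4(\D)}\le C(E(u^0))$. The $H^1$-continuity and ellipticity of $\Acal_{|u^n|}$, with constants depending only on $\beta$, $E(u^0)$ and $\D$, then yield uniform bounds $0<c\le\|\Acal_{|u^n|}^{-1}\mathcal{I}u^n\|_{H^1(\D)}\le C$. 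Since $(u^n,\Acal_{|u^n|}^{-1}\mathcal{I}u^n)_{L^2(\D)}\ge c\|\Acal_{|u^n|}^{-1}\mathcal{I}u^n\|_{H^1(\D)}^2$, this bounds $\gamma^n$ uniformly from above; writing $\hat{u}^{n+1}-u^n=\tau_n(\gamma^n\Acal_{|u^n|}^{-1}\mathcal{I}u^n-u^n)$ with $\tau_n<2$ then extracts a uniform a priori bound $\|\hat{u}^{n+1}-u^n\|_{\sR}\le C_0(\beta, E(u^0),\D)$.

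For the negative $L^4$-contribution I would combine the factorization $(|\hat{u}^{n+1}|^2-|u^n|^2)^2\le|\hat{u}^{n+1}-u^n|^2(|\hat{u}^{n+1}|+|u^n|)^2$ with Hölder's inequality and the Sobolev embedding, splitting $\||\hat{u}^{n+1}|+|u^n|\|_{L^4(\D)}\le 2\|u^n\|_{L^4(\D)}+\|\hat{u}^{n+1}-u^n\|_{L^4(\D)}$, to arrive at
\[
\tfrac{\beta}{4}\int_{\D}(|\hat{u}^{n+1}|^2-|u^n|^2)^2\dx\le C_1(\beta,E(u^0),\D)\|\hat{u}^{n+1}-u^n\|_{\sR}^2 + C_2(\beta,\D)\|\hat{u}^{n+1}-u^n\|_{\sR}^4.
\]
The quartic term gets absorbed using $\|\hat{u}^{n+1}-u^n\|_{\sR}\le C_0$, reducing the bound to $C(\beta,E(u^0),\D)\|\hat{u}^{n+1}-u^n\|_{\sR}^2$. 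Combining now gives
\[
E(u^n)-E(\hat{u}^{n+1})\ge\bigl(\tfrac{1}{\tau_n}-\tfrac{1}{2}-C(\beta,E(u^0),\D)\bigr)\|\hat{u}^{n+1}-u^n\|_{\sR}^2=:C_{\tau_n}\|\hat{u}^{n+1}-u^n\|_{\sR}^2,
\]
and choosing $\tau_{\max}<2$ small enough that $C_{\tau_n}\ge\tfrac{1}{2}$ throughout $(0,\tau_{\max})$ proves the first inequality. The conclusion $E(u^{n+1})\le E(u^n)$ follows from Lemma~\ref{lemma-inter-mass-growth}: since $\|\hat{u}^{n+1}\|_{L^2(\D)}\ge 1$, the $L^2$-normalization $u^{n+1}=\hat{u}^{n+1}/\|\hat{u}^{n+1}\|_{L^2(\D)}$ rescales by a factor $\le 1$ and thereby diminishes both terms in $E(v)=\tfrac{1}{2}\|v\|_{\sR}^2+\tfrac{\beta}{4}\|v\|_{L^4(\D)}^4$. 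This closes the induction, since then $E(u^{n+1})\le E(u^0)$.

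The principal obstacle is the circular dependence between the dominating constant and $E(u^n)$: the $L^4$-bound on $u^n$ is essential for the estimate, yet that bound itself relies on $E(u^n)\le E(u^0)$. The induction argument resolves this by fixing $\tau_{\max}$ once, based solely on $\beta$, $E(u^0)$ and $\D$, so the threshold remains globally valid along the whole iteration. A subtler technical point is securing the uniform lower bound on $(u^n,\Acal_{|u^n|}^{-1}\mathcal{I}u^n)_{L^2(\D)}$ needed to control $\gamma^n$ from above, which requires carefully tracing the $L^4$-dependence of the constants in the inversion of $\Acal_{|u^n|}:H^1_0(\D)\to H^{-1}(\D)$.
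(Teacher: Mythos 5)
Your proposal is correct, and it follows the same overall architecture as the paper: an induction on the energy bound $E(u^n)\le E(u^0)$, absorption of the quartic $\beta$-term into the $\|\cdot\|_{\sR}^2$-term with a constant depending only on $\beta$, $E(u^0)$ and $\D$, a choice of $\tau_{\mbox{\rm\tiny max}}$ making $\tfrac{1}{\tau_n}-\tfrac12-C\ge\tfrac12$, and the normalization step $E(u^{n+1})\le E(\hat u^{n+1})$ via Lemma \ref{lemma-inter-mass-growth} combined with the scaling monotonicity of $E$. Where you deviate is the technical sub-step that produces the a priori bound on the increment: the paper obtains it in one line from the defining relation \eqref{pseudo-energy-identity}, namely $\|\hat u^{n+1}-u^n\|_{\sR}^2\le\tau_n^2\,\langle\Acal_{|u^n|}u^n,u^n\rangle\le 4\tau_n^2E(u^n)$ (their \eqref{induction_1}), and then chooses $\tau_n$ small enough that the increment is $\le 1$, so that the $L^4$-term in \eqref{energy_estimate_0} collapses from a fourth power to a second power; you instead bound $\gamma^n$ and $\Acalinv{u^n}u^n$ through ellipticity and continuity of $\Acal_{|u^n|}$ to get a uniform bound $\|\hat u^{n+1}-u^n\|_{\sR}\le C_0(\beta,E(u^0),\D)$ and absorb the quartic term via $C_0^2$. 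Your route is somewhat heavier (it requires tracing the constants in the inversion of $\Acal_{|u^n|}$, including the lower bound on $(u^n,\Acalinv{u^n}u^n)_{L^2(\D)}$, which you handle correctly), but it works directly from the exact identity \eqref{growth_energy} with H\"older and the Sobolev embedding, thereby sidestepping the intermediate estimate of Lemma \ref{energy_error} and its restriction $\tau_n\le\tfrac12$ as well as the explicit smallness condition $\tau_0^2\le(4E(u^0))^{-1}$; in the end both arguments yield a $\tau_{\mbox{\rm\tiny max}}$ depending on the same data, so the gain is structural rather than quantitative.
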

\begin{proof} 
The estimate $E(u^{n+1}) \leq E(\hat{u}^{n+1})$, follows from Lemma \ref{lemma-inter-mass-growth}. Next we observe that
\begin{eqnarray}
\label{induction_1} \lefteqn{ \| \hat{u}^{n+1} - u^n\|^2_{\sR} 
\,\,\,\leq\,\,\,  \langle \Acal_{|u^n|} (\hat{u}^{n+1} - u^n), \hat{u}^{n+1} - u^n \rangle \,\,\,\overset{\eqref{pseudo-energy-identity}}{=}\,\,\, - \tau_n \, \langle \Acal_{|u^n|}u^n, \hat{u}^{n+1} - u^n \rangle }\\
\nonumber &=&\,\,\,  \tau_n^2 \, \big( \langle \Acal_{|u^n|} u^n, u^n \rangle - \gamma^n \langle \Acal_{|u^n|}  u^n, \Acalinv{u^n} u^n \rangle \big)
\,\,\, \leq\,\,\, \tau_n^2 \, \langle \Acal_{|u^n|}  u^n, u^n \rangle 
\,\,\, \overset{\eqref{energy_function-clone}}{\le} 4 \, \tau_n^2 \, E(u^n).
\end{eqnarray}
With this, the remaining proof is done by induction.
For $n=0$, \eqref{induction_1} guarantees $\| \hat{u}^1 - u^0 \|^2_{\sR} \le 1$ for $\tau_0^2 \le (4 E(u^0))^{-1}$. Using the Sobolev embedding $\3 \hookrightarrow L^4(\D)$ and the equivalence of the $\| \cdot \|_{\sR}$-norm with the standard $H^1$-norm we therefore get
$ \| \hat{u}^1 - u^{0}\|_{L^4(\D)}^{4}
\leq C \| \hat{u}^1 - u^0 \|^4_{\sR}
\leq C \| \hat{u}^1 - u^0 \|^2_{\sR}$. 
Using this in \eqref{energy_estimate_0}, we conclude (for a possibly different constant $C>0$)
\begin{eqnarray*}
E(u^0)  - E(\hat{u}^1 ) \ge (\tfrac{1}{\tau_0} - \tfrac{1}{2} - C \beta)  \| \hat{u}^1 - u^{0}  \|^2_{\sR}.
\end{eqnarray*}
Hence, there exists 
$\tau_{\mbox{\rm\tiny max}}>0$ (depending on $\beta$, $E(u^0)$ and $\D$) such that for all $\tau_0 \in (0,\tau_{\mbox{\rm\tiny max}})$ we have $C_{\tau_0}:= \tfrac{1}{\tau_0} - \tfrac{1}{2} - C \beta \ge \tfrac{1}{2}$ and $E(u^0)  - E(\hat{u}^1 )  \ge   C_{\tau_0} \| u^0 - \hat{u}^{1}  \|^2_{\sR}$. In particular, $E(u^1)\le E(u^0)$.

To conclude let inductively hold $E(u^{n})  \leq E(u^{0})$ and let $\tau_{\mbox{\rm\tiny max}}>0$ be as in step 1. Then, for all $\tau_n \le \tau_{\mbox{\rm\tiny max}}$ and with \eqref{induction_1}, it still holds $\|{\hat{u}^{n+1}} - u^n \|^2_{\sR} \le 4 \tau_n^2 E(u^n) \le 4 \tau_n^2 E(u^0) \le 1$. Consequently, with $C_{\tau_n}:= \tfrac{1}{\tau_n} - \tfrac{1}{2} - C \beta \ge \tfrac{1}{2}$ we can argue as before to verify $E(u^n)  - E({\hat{u}^{n+1}} )  \ge C_{\tau_n} \| {\hat{u}^{n+1}} - u^{n}  \|^2_{\sR}$. The proof is finished by $E(u^{n+1})  \leq E(\hat{u}^{n+1})$ thanks to \eqref{normalization-error}. Also note that $C_{\tau_n} \rightarrow \infty$ for $\tau_n \rightarrow 0$.
\end{proof}
With this, we are ready to prove the energy dissipation as stated in Theorem \ref{global_convergence_theorem} (i).
\begin{corollary}[Energy dissipation]
\label{goes_to_zero}
Assume \ref{A1}-\ref{A4}. There is $\tau_{\mbox{\rm\tiny max}}=\tau_{\mbox{\rm\tiny max}}(E,u^0,\D)<2$ such that for all $\tau_n \in (0, \tau_{\mbox{\rm\tiny max}})$ and a constant $C_{\tau_n} >0$ (with $C_{\tau_n} \rightarrow \infty$ for $\tau_n \rightarrow 0$) such that
\begin{align*}
 E(u^n)-E(u^{n+1}) & \geq C_{\tau_n} \, \|u^{n+1} - u^n \|^2_{\1}.
\end{align*}
\end{corollary}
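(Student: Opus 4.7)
The plan is to reduce Corollary \ref{goes_to_zero} to the $\| \cdot \|_{\sR}$--energy estimate already provided by Lemma \ref{energy_reduction_lemma} together with a uniform comparison between $\| u^{n+1} - u^n \|_{H^1(\D)}$ and $\| \hat u^{n+1} - u^n \|_{\sR}$. The only new ingredient is to control the loss that occurs in the normalization step $u^{n+1} = \hat u^{n+1}/\|\hat u^{n+1}\|_{L^2(\D)}$.

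First I would argue that the normalization is \emph{energy-monotone}: writing $c := \| \hat u^{n+1}\|_{L^2(\D)} \ge 1$ (by Lemma \ref{lemma-inter-mass-growth}), one has $E(u^{n+1}) = \tfrac{1}{2 c^2}\|\hat u^{n+1}\|_{\sR}^2 + \tfrac{\beta}{4 c^4}\int_{\D}|\hat u^{n+1}|^4\dx \le E(\hat u^{n+1})$ because $c \ge 1$. Combined with Lemma \ref{energy_reduction_lemma}, this already gives $E(u^n) - E(u^{n+1}) \ge C_{\tau_n}\,\|\hat u^{n+1} - u^n\|_{\sR}^2$, so the only remaining task is to pass from the $\sR$-norm of $\hat u^{n+1} - u^n$ to the $H^1$-norm of $u^{n+1} - u^n$.

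For this comparison I would use the decomposition
\begin{align*}
u^{n+1} - u^n \,=\, \tfrac{1}{c}\bigl(\hat u^{n+1} - u^n\bigr) \,+\, \tfrac{1-c}{c}\,u^n,
\end{align*}
together with the identity $c^2 - 1 = \|\hat u^{n+1} - u^n\|_{L^2(\D)}^2$ derived in the proof of Lemma \ref{lemma-inter-mass-growth}, which yields $0 \le c - 1 \le \|\hat u^{n+1} - u^n\|_{L^2(\D)}^2$. The triangle inequality, the $L^2$--$H^1$ embedding, and the norm equivalence of $\|\cdot\|_{\sR}$ with $\|\cdot\|_{H^1(\D)}$ then give a bound of the form
\begin{align*}
\| u^{n+1} - u^n \|_{H^1(\D)} \,\le\, C\,\|\hat u^{n+1} - u^n\|_{\sR}\bigl(1 + \|\hat u^{n+1} - u^n\|_{\sR}\,\|u^n\|_{H^1(\D)}\bigr).
\end{align*}
Now $\|u^n\|_{H^1(\D)}$ is uniformly bounded in terms of $E(u^0)$ (since Lemma \ref{energy_reduction_lemma} gives $E(u^n)\le E(u^0)$, which via \eqref{energy_function-clone} controls $\|u^n\|_{\sR}$), and $\|\hat u^{n+1}-u^n\|_{\sR}^2 \le 4\tau_n^2 E(u^n) \le 4\tau_n^2 E(u^0)$ was established in \eqref{induction_1}. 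Both quantities are thus bounded by constants depending only on $E(u^0)$, $\tau_{\mbox{\rm\tiny max}}$ and $\D$, so squaring produces $\|u^{n+1}-u^n\|_{H^1(\D)}^2 \le \tilde C\,\|\hat u^{n+1}-u^n\|_{\sR}^2$, and combining with Lemma \ref{energy_reduction_lemma} yields the corollary with a new constant $C_{\tau_n}/\tilde C$ that still satisfies $C_{\tau_n}/\tilde C \to \infty$ as $\tau_n \to 0$, because $\tilde C$ stays bounded while $C_{\tau_n}$ blows up.

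The only mild obstacle is the bookkeeping of constants: one needs the final constant in front of $\|u^{n+1}-u^n\|_{H^1(\D)}^2$ to preserve the qualitative $\tau_n$-dependence asserted in the statement. This is not delicate because the comparison constant $\tilde C$ in the last step depends only on the uniform a priori bounds $\|u^n\|_{H^1(\D)}$ and $\tau_n^2 E(u^0) \le \tau_{\mbox{\rm\tiny max}}^2 E(u^0)$, and therefore does not interfere with the blow-up of $C_{\tau_n}$ as $\tau_n \to 0$ coming from Lemma \ref{energy_reduction_lemma}.
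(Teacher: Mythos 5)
Your proposal is correct and follows essentially the same route as the paper: both reduce the corollary to Lemma \ref{energy_reduction_lemma} by establishing a comparison $\|u^{n+1}-u^n\|_{\1}\le C\,\|\hat u^{n+1}-u^n\|_{\sR}$ with $C$ depending only on $E(u^0)$ and the norm equivalence, so that the $\tau_n$-dependence of $C_{\tau_n}$ is untouched. The only (harmless) deviation is how the normalization factor is handled: you bound $c-1$ quadratically via the Pythagorean identity $c^2-1=\|\hat u^{n+1}-u^n\|_{\0}^2$ and then invoke the a priori bound \eqref{induction_1} to absorb the extra factor, whereas the paper gets a linear bound directly from the reverse triangle inequality $\|\hat u^{n+1}\|_{\0}-1\le\|\hat u^{n+1}-u^n\|_{\0}$; both arguments are valid.
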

\begin{proof}
The result follows from Lemma \ref{energy_reduction_lemma} if we show $ \|u^{n+1} - u^n \|_{\1} \le C \| \hat{u}^{n+1} - u^n  \|_{\sR}$. This is easily seen by recalling that $ 1 = \|u^n \|_{\0} \leq \|\hat{u}^{n+1}\|_{\0}$ and $\|u^n\|_{\1} \leq c \, \sqrt{ E(u^n) } \le c \sqrt{ E(u^0) } =: C_{u^0}$. We obtain
\begin{eqnarray*}
\lefteqn{\|u^{n+1} \narrowminus u^n \|_{\1} \,\leq\, \|\hat{u}^{n+1} \|_{\0} \|u^{n+1} \narrowminus u^n\|_{\1}
 \,=\, \|\hat{u}^{n+1} \|_{\0} \| \frac{\hat{u}^{n+1}}{\|\hat{u}^{n+1} \|_{\0}} \narrowminus  u^n \|_{\1} } \\
 & = &\hspace{-5pt} \| \hat{u}^{n+1}  \narrowminus u^n +u^n \narrowminus \|\hat{u}^{n+1} \|_{\0} u^n \|_{\1} 
 \,\leq\, \| \hat{u}^{n+1} \narrowminus u^n \|_{\1} + \| (\|\hat{u}^{n+1} \|_{\0} \narrowminus 1) u^n \|_{\1} \\
 &\leq &\hspace{-5pt}
 \| \hat{u}^{n+1} \narrowminus u^n \|_{\1} + C_{u^0} \,  \| \hat{u}^{n+1} \narrowminus  u^n \|_{\0} 
   \,\leq \,   (1+ C_{u^0}  ) \| \hat{u}^{n+1} \narrowminus u^n \|_{\1} . 
\end{eqnarray*}
Hence, we have with  Lemma \ref{energy_reduction_lemma}
\begin{align*}
 E(u^n) \narrowminus E(u^{n+1} )  \ge E(u^n) \narrowminus E({\hat{u}^{n+1}} ) \geq  C_{\tau_n} \| {\hat{u}^{n+1}} \narrowminus u^{n}  \|^2_{\sR}  \geq C_{\tau_n} (1+ C_{u^0})^{-2}  \|u^{n+1} \narrowminus u^n \|^2_{\1}.
\end{align*}
\end{proof}
\subsection{Proof of Global convergence}
It remains to prove (ii)-(iv) from Theorem \ref{global_convergence_theorem} which we will do in the following.
\begin{proof}[Proof of Theorem \ref{global_convergence_theorem}] 
The first part of the proof follows standard compactness results analogously to \cite{HeP20}. Statement (i) is proved by Corollary \ref{goes_to_zero}, which also implies (ii) since $\{ E(u^n) \}_{n\in\mathbb{N}}$ is a monotonically decreasing sequence of real numbers. As $E(u^n)\le E(u^0)$, the sequence $\{u^n\}_{n\in \mathbb{N}}$ is uniformly bounded  in $\3$ and (by Rellich--Kondrachov embedding) we can extract a subsequence $\{ u^{n_j} \}_{j \in \mathbb{N}}$ that converges to some $u\in \mathbb{S}$ weakly in $H^1$  and strongly in $L^{p}$ (with $p<6$ for $d\le 3$). Using \eqref{Energy-diff-by-H1-diff} we have $\lim\limits_{n\rightarrow \infty} \| \hat{u}^{n+1} - u^n\|_{H^1(\D)}=0$ and hence 
$$
1 = \lim\limits_{n\rightarrow \infty} \| \hat{u}^{n+1} \|_{L^2(\D)}
=  \lim\limits_{n\rightarrow \infty} \|  (1-\tau_n)u^n + \tau_n \gamma^n \Acalinv{u^n} u^n \|_{L^2(\D)}.
$$
Next, we use the coercivity of $\Acal_{|u|}\mathcal{I}$ %
together with $\| u^{n_j} - u\|_{L^{2}(\D)} \rightarrow 0$, to conclude that 
$$
\| \Acalinv{u} u^{n_j}  -  \Acalinv{u} u\|_{H^1(\D)} \le C \|  u^{n_j} - u \|_{L^2(\D)} \rightarrow 0,
$$
i.e., $\Acalinv{u} u^{n_j} \rightarrow \Acalinv{u} u$ in $H^1(\D)$. To ensure convergence of $\Acalinv{u^{n_j}} u^{n_j}$ to $\Acalinv{u} u$ (weakly in $H^1$ and strongly in $L^2$), it remains to check $\Acalinv{u} u^{n_j} - \Acalinv{u^{n_j}} u^{n_j}$. For this, we use the convergence  $\| u^{n_j} - u\|_{L^{4}(\D)} \rightarrow 0$ together with the estimate $\| \Acalinv{u^{n_j}} u^{n_j} \|_{H^1(\D)} \le C \|  u^{n_j} \|_{L^2(\D)} = C$ to obtain for any $v\in H^1_0(\D)$ that
\begin{eqnarray*}
\lefteqn{ | \langle \Acal_{|u|} (\Acalinv{u} u^{n_j} - \Acalinv{u^{n_j}} u^{n_j}) ,v  \rangle |
\,\,= \,\, \beta | ( \,(| u^{n_j}|^2 -|u|^2) \, \Acalinv{u^{n_j}} u^{n_j} , v )_{L^2(\D)} | } \\
&\le& \beta\, \| \, | u^{n_j}|^2 -|u|^2 \|_{L^2(\D)} \| \Acalinv{u^{n_j}} u^{n_j} \|_{L^4(\D)} \| v\|_{L^4(\D)}  \\
&\le& C \, \beta\, \| \, | u^{n_j}|^2 -|u|^2 \|_{L^2(\D)} \| v\|_{H^1(\D)} 
\,\, \le \,\, C \, \beta\, \| \, u^{n_j} - u \|_{L^4(\D)} \| v\|_{H^1(\D)}   \rightarrow 0
\end{eqnarray*}
for $n_j \rightarrow \infty$ and a constant $C$ depending on $\Omega$, $d$ and on the uniform $H^1$-bounds for $u^n$. Since $\langle \Acal_{|u|} \, \cdot,\cdot\rangle$ is equivalent to the $H^1$-inner product on $H^1_0(\D)$, we conclude altogether that $\Acalinv{u^{n_j}} u^{n_j} \rightharpoonup \Acalinv{u} u$ in $H^1$ and $\Acalinv{u^{n_j}} u^{n_j} \rightarrow \Acalinv{u} u$ in $L^2$.
Therefore, $\gamma^{n_j} = ( u^{n_j} , \Acalinv{u^{n_j}} u^{n_j} )_{\0}^{-1}$ must converge to $\lambda := ( u ,\Acalinv{u} u  )_{\0}^{-1}$. We are ready to combine the above identified limits to obtain for arbitrary $v\in \3$:
\begin{eqnarray*}
\lefteqn{ 0 \overset{j \rightarrow \infty} \longleftarrow \hspace{3pt}
 \tau_{n_j}^{-1} \langle \Acal_{|u|} ( \hat{u}^{n_{j} +1} - u^{n_j} ), v \rangle }\\
&=&   \langle \Acal_{|u|}( - u^{n_j} + \gamma^{n_j} \Acalinv{u^{n_j}} u^{n_j}, v \rangle \overset{j \rightarrow \infty}\longrightarrow -  \langle \Acal_{|u|} u , v \rangle + \lambda \,  \langle \Acal_{|u|} \Acalinv{u} u, v \rangle.
\end{eqnarray*}
Thus, $E^{\prime}(u)= \Acal_{|u|} u = \lambda \mathcal{I}u$, which proves the first part of (iii). It remains to verify the strong $H^1$-convergence. Here we obtain with $\gamma^{n_j} = \tau_{n_j}^{-1} \langle \Acal_{|u^{n_j}|}(\hat{u}^{n_j +1} - u^{n_j}) , u^{n_j} \rangle + \langle \Acal_{|u^{n_j}|}  u^{n_j} , u^{n_j} \rangle$ that
\begin{eqnarray*}
\lefteqn{2 \hspace{2pt} | E(u) - E(u^{n_j})|  \,\,\,=\,\,\, \left| \lambda - \gamma^{n_j} + \gamma^{n_j} - \tfrac{\beta}{2} \int_{\D} |u|^4 \dx
- \langle \Acal_{|u^{n_j}|} u^{n_j},u^{n_j} \rangle + \tfrac{\beta}{2} \int_{\D} |u^{n_j}|^4 \dx\right| }\\
& \overset {\eqref{pseudo-energy-identity}}\le& | \lambda - \gamma^{n_j}| + \tfrac{\beta}{2} \int_{\D}\bigl| |u^{n_j}|^4 - |u|^4  \bigr|\dx  +  \tau_{n_j}^{-2} \,  \langle \Acal_{|u^{n_j}|}(\hat{u}^{n_j +1} - u^{n_j}) , \hat{u}^{n_j +1} - u^{n_j} \rangle \\
&\overset{\eqref{Energy-diff-by-H1-diff}}{\le} &  | \lambda - \gamma^{n_j}| + \tfrac{\beta}{2} \int_{\D} \bigl| |u^{n_j}|^4 - |u|^4  \bigr|\dx + c(\tau_{\mbox{\tiny\rm min}}, \tau_{\mbox{\tiny\rm max}}) \, \big( E( u^{n_j }) - E(u^{n_{j}+1}) \big) \overset{j \rightarrow \infty}{\longrightarrow} 0
\end{eqnarray*}
Passing to the limit on both sides yields $E(u) - \boldsymbol{E}=\lim\limits_{j\rightarrow \infty} E(u) - E(u^{n_j})=0$, which implies, together with the weak convergence in $H^1$, strong convergence in $H^1$. This proves (iii). 
It remains to check (iv). Since the limit energy $\mathbf{E}$ is unique, all limits $u \in \mathbb{S}$ of subsequences in (iii) have the same energy level. Now assume that one such limit is a locally quasi-unique ground state, call it $u$, then all accumulation points $\hat{u} \in \mathbb{S}$ of $\{ u^n \}_{n \in \mathbb{N} }$ must be ground states. To account for the phase shifts, we introduce the equivalence class $[v]:=\{ \exp(\ci \phase) v \, |\,\phase \in [-\pi,\pi)\}$ and define the distance between $[v]$ and $[w]$ by
\begin{align*}
\mbox{dist}( [v] , [w] ) := \inf_{(v,w) \in [v] \times [w] } \| v - w\|_{H^1(\D)}. 
\end{align*}
Now consider the set $\mathcal{Z}$ of equivalence classes of accumulation points of $\{ u^n \}_{n \in \mathbb{N} }$, 
i.e., 
$$
\Zu :=\{ \,[\hat{u}]  \, | \, \hat{u} \in \mathbb{S} \, \text{ is an accumulation point of } \, \{u^n\}_{n\in \mathbb{N}}
 \, \}.
$$
Since $u$ is locally quasi-unique and $\Zu$ only contains equivalence classes of ground states, $\mathcal{Z} \setminus [u]$ is closed and we have
\begin{align*}
2\delta := \mbox{dist}(\,[u]\hspace{1pt},  \Zu \hspace{-1pt}\setminus \hspace{-1pt} [u] ) := \inf_{[v] \in \Zu \setminus [u] } \mbox{dist}( [u] , [v] ) > 0.
\end{align*}
Therefore, there is no $[\hat{u}] \in\Zu \hspace{-1pt}\setminus \hspace{-1pt} [u] $ which satisfies that $\delta < \mbox{dist}([u] ,[\hat{u}] ) < \delta + \eps$ for $\eps\in (0, \delta)$.
Hence, with \,$\mathbb{B}_{\delta}([u]) \hspace{-1pt}:= \hspace{-1pt}\{ \, [v] \,|\,\, v\in \mathbb{S},\,\, \mbox{dist}( [u] , [v] ) < \delta\,\}$\, the sequence $\{ [u^n] \}_{n\in\mathbb{N}}$ 
has only finitely many elements that satisfy 
\begin{equation}
\label{finite}
 \delta  < 
  \mbox{dist}([u] ,[u^n] ) <  \delta  + \eps 
\end{equation}
(otherwise we could select a weakly converging subsequence among these elements $u^n$ and repeat the same arguments as in (iii) to prove that this subsequence is strongly converging in $H^1$ to a ground state, whose equivalence class would be, by definition, an element of $\Zu \setminus [u] $ and hence leading to a contradiction that the distance of $\Zu \setminus [u] $ to $[u]$ is $2\delta$).
On the other hand, we have $\|u^{n+1} - u^n \|_{\1} \rightarrow 0$ for $n \rightarrow \infty$ (cf. Corollary \ref{goes_to_zero}), which implies $\mbox{dist}([u^{n+1}],[u^n]) \rightarrow 0$. Consequently, by selecting $N \in \mathbb{N}$ large enough so that $[u^{n}] \not\in \mathbb{B}_{\delta + \eps }([u]) \setminus \mathbb{B}_{\delta}([u])$ for all $n\ge N$, we conclude from  $\mbox{dist}([u^{n+1}],[u^n]) \rightarrow 0$ that $[u^n] \in \mathbb{B}_{\delta}([u])$ for all $n\ge N$. Hence, $[u^n]$ converges to $[u]$, which proves (iv) as, with the diamagnetic inequality,
\begin{align*}
\| \, |u^n| - |u| \, \|_{H^1(\D)} \le 
\inf_{ \phase_1,\phase_2 \in [-\pi,\pi) } \| \, \exp(\ci \phase_1) u^n - \exp(\ci \phase_2) u \|_{H^1(\D)}
= \mbox{dist}([u^{n}],[u]) \rightarrow 0.
\end{align*}
\end{proof}
\section{Proof of Theorem \ref{linear_rate_theorem}: Local convergence results}
\label{section-local-convergence-proof}
We will now turn to the local convergence analysis and the quantification of corresponding rates in the case of a fixed step size $\tau=\tau_n$ for all $n$. For that, note that the iterations \eqref{method} can be seen as fixed-point iteration $u^{n+1}=\phi_{\tau}(u^n)$ with fixed-point function $\phi_{\tau}: \3 \rightarrow \3$ given by
\begin{align}
\label{fixed-point-scheme}
\phi_{\tau} ( v ):= \frac{ \hspace{-4pt}\psi_{\tau}(v)}{\| \psi_{\tau}(v) \|_{\0}\hspace{-20pt}}\qquad\hspace{10pt} \mbox{where } \qquad
\psi_{\tau}(v):= (1-\tau) v + \tau \, \gamma(v) \, \Acalinv{v} v \, ,
\end{align}
and $\gamma(v):= (\Acalinv{v} v , v)_{\0}^{-1}$. We want to apply Ostrowski's theorem in Banach spaces (cf. \cite{AHP21NumMath,Shi81}) to guarantee local convergence with the convergence rate given by the spectral radius of the differential $\phi_{\tau}^{\prime}(u)$, provided that this spectral radius is strictly smaller than $1$. However, Ostrowski's theorem is not directly applicable to the above iteration, as we would find the spectral radius to become $1$ for the eigenfunction $\ci u$ which corresponds to complex phase shifts. In fact, if $u$ is a fixed point of $\phi_{\tau}$, then $\exp(\ci \phase) u$ is another fixed point (for any $\phase \in [-\pi,\pi)$) that can be arbitrarily close. Therefore, we need to introduce an auxiliary iteration which artificially adjusts the phase in each iteration. After establishing local convergence for the auxiliary scheme, the results can be transferred to the original scheme \eqref{method}. 
%
\begin{definition}[Auxiliary iteration]
\label{auxi-scheme-definition} 
Let $u \in \mathbb{S}$ be a locally quasi-unique ground state. For a given initial value $\tilde{u}^0 \in \mathbb{S}$, the auxiliary iterates $\tilde{u}^{n+1} \in \3$  for $n \geq 0$ are defined by
\begin{align}
\label{auxi-scheme}
\tilde{u}^{n+1} =   \tilde{\phi}_{\tau}(\tilde{u}^n) ,
\end{align} 
where the fixed point operator  $\tilde{\phi}_{\tau}: D(\tilde{\phi}_{\tau}) \subset \3 \rightarrow \3$ is given by
\begin{align}
\label{def-tilde-phi-tau-v}
\tilde{\phi}_{\tau}(v):= \Theta_{u}(v) \, \phi_{\tau}(v) \qquad \mbox{for } \, v \in D(\tilde{\phi}_{\tau}),
\end{align}
with  $\Theta_{u}(v) := \tfrac{\theta_{u}(v)}{| \theta_{u}(v) |} \, ,\, \theta_{u}(v) :=\overline{\int_{\D} \psi_{\tau}(v) \, \overline{u} \dx }$ and $\psi_{\tau}(v) , \phi_{\tau}(v)$ are as defined in \eqref{fixed-point-scheme}. The iterates are well-defined in a neighborhood of $u$ because it holds $\Theta_{u}(u)=1$.
\end{definition}
To better understand the iterations \eqref{auxi-scheme}, note that $u$ is still a fixed point, i.e., $\tilde{\phi}_{\tau}(u)=u$, but phase shifts of $u$ are no longer fixed points as
\begin{align*}
\tilde{\phi}_{\tau}(\exp(\ci \phase) u) &=  \tfrac{\theta_{u}( \exp(\ci \phase) u )}{| \theta_{u}( \exp(\ci \phase) u ) |} \, \phi_{\tau}( \exp(\ci \phase) u ) =  \exp(- \ci \phase) (\exp( \ci \phase)u) \\
&= u \not= \exp( \ci \phase)u 
\qquad \mbox{for any } \phase \in [-\pi , 0) \cup (0, \pi ).
\end{align*}
Furthermore, if initialized with the same starting value, the iterates $\tilde{u}^n$ and $u^n$ only deviate in a constant phase factor. This is summarized in the following lemma.
\begin{lemma}
\label{lemma-relation-un-tilde-un}
Assume \ref{A1}-\ref{A4} and let $u\in \mathbb{S}$ denote a ground state. Consider the iterates $u^n$ of the Riemannian gradient method \eqref{method} with constant step size $\tau$ and let $\tilde{u}^n$ denote the companion auxiliary iterates generated by \eqref{auxi-scheme}. If both iterations are initialized with the same starting value $\tilde{u}^0 = u^0 \in \mathbb{S}$ and if $\theta_{u}(u^n) \not= 0$ for all $n\ge 0$, then it holds
\begin{align}
\label{relation-un-tilde-un}
\tilde{u}^{n+1} = \Theta_{u}(u^n) \, u^{n+1}.
\end{align}
\end{lemma}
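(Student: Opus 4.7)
The plan is to prove the statement by induction on $n$, where the key structural ingredient will be a phase-equivariance property of the building blocks of the iteration. Specifically, I intend to show that for every unit-modulus scalar $c\in\C$ with $|c|=1$ and every $v\in\3$, one has
\begin{equation*}
\psi_{\tau}(cv) \,=\, c\,\psi_{\tau}(v), \qquad \phi_{\tau}(cv) \,=\, c\,\phi_{\tau}(v), \qquad \Theta_{u}(cv) \,=\, \bar{c}\,\Theta_{u}(v),
\end{equation*}
from which it immediately follows that $\tilde{\phi}_{\tau}(cv) = \Theta_u(cv)\,\phi_\tau(cv) = \bar c\, c\,\Theta_u(v)\,\phi_\tau(v) = \tilde{\phi}_{\tau}(v)$, i.e.\ the auxiliary fixed-point map is phase invariant.

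First I would establish the equivariance of $\Acalinv{v}\,v$. Note $|cv| = |v|$ so $\Acal_{|cv|} = \Acal_{|v|}$. Setting $z := \Acalinv{v} v$, the defining equation $\langle \Acal_{|v|} z, \phi\rangle = (v,\phi)_{L^2(\D)}$ is bilinear in the real form $\re\!\int\!(\cdot)\overline{(\cdot)}$ for each of the four terms in $\Acal_{|v|}$. Testing the candidate $cz$ for the problem with right-hand side $\mathcal{I}(cv)$ against $c\eta$ (which ranges over $\3$ as $\eta$ does, since $|c|=1$), the factors $c$ and $\bar c$ recombine to $|c|^2=1$ in every term of both sides, yielding $\langle\Acal_{|v|}(cz),c\eta\rangle=\langle\Acal_{|v|}z,\eta\rangle=(v,\eta)_{L^2(\D)}=(cv,c\eta)_{L^2(\D)}$. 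Hence $\Acalinv{cv}\,(cv) = c\,\Acalinv{v}\,v$, and the same cancellation gives $\gamma(cv) = (cv,c\Acalinv{v}v)_{L^2(\D)}^{-1} = \gamma(v)$. Putting these together yields the equivariance of $\psi_\tau$ and, after $L^2$-normalization, of $\phi_\tau$. The transformation law $\Theta_u(cv)=\bar c\,\Theta_u(v)$ then follows from $\theta_u(cv)=\overline{\int c\psi_\tau(v)\bar u\,\dx} = \bar c\,\theta_u(v)$.

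With phase invariance of $\tilde\phi_\tau$ in hand, the induction is short. For $n=0$, since $\tilde u^0 = u^0$,
\begin{equation*}
\tilde u^{1} \,=\, \tilde\phi_\tau(u^0) \,=\, \Theta_u(u^0)\,\phi_\tau(u^0) \,=\, \Theta_u(u^0)\,u^{1},
\end{equation*}
as claimed. For the inductive step, suppose $\tilde u^n = c\,u^n$ with $c := \Theta_u(u^{n-1})$, $|c|=1$. Using the phase invariance established above,
\begin{equation*}
\tilde u^{n+1} \,=\, \tilde\phi_\tau(\tilde u^n) \,=\, \tilde\phi_\tau(c\,u^n) \,=\, \tilde\phi_\tau(u^n) \,=\, \Theta_u(u^n)\,\phi_\tau(u^n) \,=\, \Theta_u(u^n)\,u^{n+1},
\end{equation*}
which is the desired identity. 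The assumption $\theta_u(u^n)\neq 0$ is exactly what guarantees that every $\Theta_u(u^n)$ appearing along the way is well defined, so the induction does not break down.

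The step I expect to require the most care is the equivariance $\Acalinv{cv}\mathcal{I}(cv) = c\,\Acalinv{v}\mathcal{I}v$, because the real-inner-product convention makes $\mathcal{I}$ only real-linear, so the identity is not automatic from complex linearity of the differential operator. The argument hinges on the fact that each term of $\langle\Acal_{|v|}\cdot,\cdot\rangle$ has the sesquilinear form $\re\!\int A(\cdot)\,\overline{B(\cdot)}$, and that scaling both slots by $c$ and $\bar c^{-1}=c$ respectively introduces only a factor $|c|^2=1$; the same mechanism yields $\gamma(cv)=\gamma(v)$ and cleanly propagates through the normalization that defines $\phi_\tau$. Everything else is bookkeeping.
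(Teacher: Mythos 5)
Your proof is correct and follows essentially the same strategy as the paper's: establish the phase equivariance $\Acalinv{cv}(cv) = c\,\Acalinv{v}v$ (the paper writes $c=\exp(\ci\omega)$ and introduces a gauge operator $G_\omega$, but the computation is identical), deduce $\psi_\tau(cv) = c\,\psi_\tau(v)$ and $\phi_\tau(cv)=c\,\phi_\tau(v)$, and then run the induction. The only cosmetic difference is that you extract the combined observation $\tilde\phi_\tau(cv) = \tilde\phi_\tau(v)$ as a standalone phase-invariance lemma, whereas the paper performs the same $c\cdot\bar c$ cancellation inline during the induction step; your packaging is slightly cleaner but the underlying argument is the same.
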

The above lemma will be later used to prove local convergence for $u^n$.
\begin{proof}
Let $\phase \in [-\pi ,\pi)$ be arbitrary and let us for brevity define the linear gauge transform $G_{\phase}v:=\exp(\ci \phase) v$. For any $v,w \in H^1_0(\D)$ it holds
\begin{align*}
\langle \Acal_{|v|}( \Acalinv{v}  v ), w \rangle &= (v,w)_{\0}= ( G_{\phase}v , G_{\phase}w )_{\0} 
= \langle \Acal_{|G_{\phase}v|}( \Acalinv{G_{\phase}v}  (G_{\phase}v)) , G_{\phase} w \rangle \\
&= \langle \Acal_{|v|}( \Acalinv{G_{\phase}v}  (G_{\phase}v)) , G_{\phase} w \rangle 
= \langle \Acal_{|v|}( G_{-\phase} \Acalinv{G_{\phase}v}  (G_{\phase}v)) , w \rangle 
\end{align*}
and therefore $\Acalinv{G_{\phase}v}(G_{\phase}v) = G_{\phase}(\Acalinv{v}v)$. Consequently, we have
 \begin{eqnarray*} 
\psi_{\tau}(G_{\phase}v) &=&   (1-\tau) G_{\phase}v + \tau ( \Acalinv{G_{\phase}v}(G_{\phase}v) ,G_{\phase}v )_{L^2(\D)}^{-1} \Acalinv{G_{\phase}v}(G_{\phase}v) \\
&=&  (1-\tau) G_{\phase}v + \tau ( G_{\phase}(\Acalinv{v}v) ,G_{\phase}v )_{L^2(\D)}^{-1} G_{\phase}(\Acalinv{v}v) \\
&\overset{|\exp(\ci \phase)|=1}{=}&  G_{\phase}\left( \, (1-\tau) v + \tau ( \Acalinv{v}v , v )_{L^2(\D)}^{-1} \Acalinv{v}v \, \right) \,\,\,=\,\,\,  G_{\phase}(\,\psi_{\tau}(v)\,).
\end{eqnarray*}
We conclude
\begin{align}
\label{gauge-transform-in-iteration}
\phi_{\tau} ( \, G_{\phase}v \,  )= \frac{ \hspace{-4pt}\psi_{\tau}(G_{\phase}v)}{\| \psi_{\tau}(G_{\phase}v) \|_{\0}\hspace{-20pt}}\hspace{15pt}
= \frac{ \hspace{-4pt}G_{\phase}(\,\psi_{\tau}(v)\,)}{\| \psi_{\tau}(v) \|_{\0}}
=  G_{\phase}(\, \phi_{\tau} ( v  )\,).
\end{align}
The claim \eqref{relation-un-tilde-un} can be now proved by induction. The base step $n=0$ follows readily with
\begin{align*}
\tilde{u}^1 = \tilde{\phi}_{\tau}(\tilde{u}^0) &= \Theta_{u}(\tilde{u}^0) \phi_{\tau}(\tilde{u}^0) \overset{\tilde{u}^0 = u^0}=  \Theta_{u}(u^0)  \,  \phi_{\tau}(u^0) =  \Theta_{u}(u^0) \, u^1 .
\end{align*}
For the induction step $n\mapsto n+1$, let $\tilde{u}^{n} = \Theta_{u}(u^{n-1}) \, u^n$ hold. Furthermore, note that since $\theta_{u}(u^{n-1}) \not= 0$, we have $|\Theta_{u}(u^{n-1})|=1$ and hence there exists a $\phase_{n-1} \in [-\pi,\pi)$ such that  $\Theta_{u}(u^{n-1}) = \exp( \ci \phase_{n-1})$. Hence, with $\tilde{u}^n = G_{\omega_{n-1}} u^n$ we obtain
\begin{align*}
\tilde{u}^{n+1} &= \tilde{\phi}_{\tau}(\tilde{u}^{n}) 
= \tilde{\phi}_{\tau}(G_{\omega_{n-1}} u^n) = \Theta_{u}(G_{\omega_{n-1}} u^n) \,\phi_{\tau}(G_{\omega_{n-1}} u^n) \\
&=  \tfrac{ \overline{\int_{\D} \psi_{\tau}(G_{\omega_{n-1}} u^n) \, \overline{u}\, \mbox{\scriptsize d}x } }{| \overline{\int_{\D} \psi_{\tau}(G_{\omega_{n-1}} u^n) \, \overline{u} \, \mbox{\scriptsize d}x }  |} \,\phi_{\tau}(G_{\omega_{n-1}}u^n) 
\overset{\eqref{gauge-transform-in-iteration}}{=}  G_{-\omega_{n-1}} \hspace{-2pt} \left( \tfrac{ \overline{\int_{\D} \psi_{\tau}(u^n) \, \overline{u}\, \mbox{\scriptsize d}x } }{| \overline{\int_{\D} \psi_{\tau}(u^n) \, \overline{u} \, \mbox{\scriptsize d}x }  |} \right)\, G_{\omega_{n-1}}(\,\phi_{\tau}(u^n)\,) 
\\
&=  \tfrac{ \overline{\int_{\D} \psi_{\tau}(u^n) \, \overline{u}\, \mbox{\scriptsize d}x } }{| \overline{\int_{\D} \psi_{\tau}(u^n) \, \overline{u} \, \mbox{\scriptsize d}x }  |} \, \phi_{\tau}(u^n)    = \Theta_{u}(u^n) \, \phi_{\tau}(u^n) = \Theta_{u}(u^n) \, u^{n+1},
\end{align*}
where we used the properties $G_{-\omega}(v)\,G_{\omega}(w)=v\,w$ and $|G_{\omega}(v)|=|v|$. This proves \eqref{relation-un-tilde-un}.
\end{proof}
In order to establish local convergence of the auxiliary iterations \eqref{auxi-scheme} with Ostrowski's theorem, we need to characterize the spectrum of $\tilde{\phi}^{\prime}_{\tau}(u)$ and bound its spectral radius afterwards. The first step is taken in the next subsection.

\subsection{The operator $\tilde{\phi}^{\prime}_{\tau}(u)$}
This subsection is devoted to the calculation of the Fr\'echet derivative of the (auxiliary) fixed-point function $\tilde{\phi}_{\tau}$ at $u$ and various technical results regarding its spectrum. First, we note that $\tilde{\phi}_{\tau}$ is Fr\'echet differentiable in an open neighborhood $U\subset H^1_0(\D)$ around $u$. For any $v \in U$ and direction $h \in \3$, the corresponding derivative is given by the product rule as
\begin{align}
\label{simple-product-rule}
\tilde{\phi}_{\tau}'(v)h= \Theta_{u}(v) \, \phi_{\tau}'(v)h + \phi_{\tau}(v) \, \Theta_{u}'(v)h.
\end{align}
The derivatives of $\phi_{\tau}$ and $\Theta_{u}$ at $u$ are explicitly derived in the following lemmas. We begin with $\phi_{\tau} $ for which we first need to calculate the derivatives of $\Acalinv{v} v$ and $\gamma(v)$ at $v =u$.
\begin{lemma}
\label{psi-derivative_lemma}
Assume \ref{A1}-\ref{A4}. The map $\psi : \3 \rightarrow \3$ with $\psi(v):=\Acalinv{v} v$ is Fr\'echet differentiable for all $v\in \3$ with derivative $\psi^{\prime}(v): \3 \rightarrow \3$ given by
\begin{align}
\label{psi-derivative}
\psi^{\prime}(v) h = \Acalinv{v}(h - 2 \beta \,\re(v \overline{h}) \, \psi(v)  )
\end{align}  
for $h\in \3$. In particular, we have for $v=u$
\begin{align}
\label{psi-derivative-ground-state}
\psi^{\prime}(u) h =  \Acalinv{u} (h - \tfrac{2 \beta}{\lm} \,\re(u \overline{h})u ).
\end{align}
\end{lemma}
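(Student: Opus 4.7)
The plan is to linearize the variational characterization of $\psi(v)=\Acalinv{v} v$ and read off the Fréchet derivative directly. By definition, $z:=\psi(v)\in\3$ is the unique solution of
\begin{align*}
\langle \Acal_{|v|} z, w\rangle = (v,w)_{\0} \qquad \text{for all } w\in\3,
\end{align*}
and well-posedness remains uniform under small $\3$-perturbations of $v$ because $\beta\ge 0$ and $\||v|^2\|_{\0}$ is controlled by $\|v\|_{L^4(\D)}$ via the Sobolev embedding $\3 \hookrightarrow L^4(\D)$ (valid for $d=2,3$). This uniform coercivity is what will later allow me to invert $\Acal_{|v|}$ on the remainder terms.

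To derive the derivative, I would take $h\in\3$, write the defining equation for $\psi(v+h)$, subtract that for $\psi(v)$, and expand the coefficient using the elementary identity
\begin{align*}
|v+h|^2 - |v|^2 = 2\,\re(v\,\overline{h}) + |h|^2.
\end{align*}
This yields
\begin{align*}
\langle \Acal_{|v|}(\psi(v+h)-\psi(v)),w\rangle = (h,w)_{\0} - \beta\bigl((2\,\re(v\,\overline{h})+|h|^2)\,\psi(v+h),w\bigr)_{\0}.
\end{align*}
Isolating the contribution that is linear in $h$ (and evaluating the $\beta$-term at $\psi(v)$ instead of $\psi(v+h)$) suggests the candidate
\begin{align*}
\psi^{\prime}(v)h := \Acalinv{v}\bigl(h - 2\beta\,\re(v\,\overline{h})\,\psi(v)\bigr),
\end{align*}
which is exactly \eqref{psi-derivative}.

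To promote this formal computation to a genuine Fréchet derivative, I would test the remainder $r(h):=\psi(v+h)-\psi(v)-\psi^{\prime}(v)h$ in the variational formulation and estimate the resulting data, namely $\beta|h|^2\psi(v+h)$ and $2\beta\,\re(v\,\overline{h})(\psi(v+h)-\psi(v))$. Using continuity of $v\mapsto \psi(v)$ in $L^4(\D)$ (a consequence of well-posedness together with Rellich compactness) and the embedding $\3 \hookrightarrow L^4(\D)$, both of these can be bounded in $H^{-1}(\D)$ by $C_v\,\|h\|_{\3}^2$, which combined with the uniform coercivity of $\Acal_{|v|}$ gives $\|r(h)\|_{\3}=o(\|h\|_{\3})$. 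For the specialization \eqref{psi-derivative-ground-state}, I would observe that the ground state equation \eqref{eigen_value_problem_2} can be recast as $\langle \Acal_{|u|}u,w\rangle = \lambda(u,w)_{\0}$, hence $\psi(u)=\Acalinv{u}u = u/\lambda$; substituting $\psi(u)=u/\lambda$ into the general formula instantly yields \eqref{psi-derivative-ground-state}. The main obstacle is the implicit dependence of $\Acalinv{v}$ on $v$ in the remainder estimate; this is resolved cleanly by the sign condition $\beta\ge 0$ and the $L^4$-controlled perturbation of the zero-order coefficient $\beta|v|^2$ in $\Acal_{|v|}$, so that the $\3$-resolvent inequality can be applied uniformly in a neighborhood of $v$.
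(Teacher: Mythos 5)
Your proposal is correct, and it arrives at the formula by a somewhat different mechanism than the paper. The paper differentiates the defining identity $\langle \Acal_{|v|}\psi(v),\cdot\,\rangle=\mathcal{I}v$ formally with the product rule, using $\tfrac{\mathrm{d}}{\mathrm{d}v}\langle\Acal_{|v|}w,\cdot\,\rangle\,h=2\beta\,\mathcal{I}(\re(v\overline{h})w)$, reads off $\Acal_{|v|}(\psi'(v)h)=\mathcal{I}(h-2\beta\re(v\overline{h})\psi(v))$, and then only checks that the resulting map $h\mapsto\Acalinv{v}(h-2\beta\re(v\overline{h})\psi(v))$ is a well-defined bounded linear operator (via H\"older and $\3\hookrightarrow L^6(\D)$), before specializing with $\psi(u)=\lambda^{-1}u$ exactly as you do. You instead linearize the variational problem directly: subtract the equations for $\psi(v+h)$ and $\psi(v)$, expand $|v+h|^2-|v|^2=2\re(v\overline{h})+|h|^2$, identify the linear part, and estimate the remainder $r(h)$ through the uniform ($v$-independent, thanks to $\beta\ge0$) coercivity of $\Acal_{|v|}$. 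What your route buys is a genuinely complete verification of Fr\'echet differentiability — the remainder is shown to be $o(\|h\|_{\1})$ — whereas the paper's argument presupposes differentiability in applying the product rule and then certifies only that the candidate derivative is a continuous linear map; what the paper's route buys is brevity and a formula-level derivation that transfers immediately to the later computations of $\gamma'(u)$ and $\psi_\tau'(u)$.

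One small remark on your justification of the remainder bound: Rellich compactness is not needed and, as cited, would only give qualitative continuity of $v\mapsto\psi(v)$, i.e. $\|\psi(v+h)-\psi(v)\|_{L^4(\D)}=o(1)$, which yields the bound $o(\|h\|_{\1})$ for the term $2\beta\,\re(v\overline{h})(\psi(v+h)-\psi(v))$ rather than the stated $C_v\|h\|_{\1}^2$; this weaker bound already suffices for Fr\'echet differentiability. If you want the quadratic bound, test the difference identity with $w=\psi(v+h)-\psi(v)$ and use the coercivity $\langle\Acal_{|v|}w,w\rangle\ge\|w\|_{\sR}^2$ to obtain the local Lipschitz estimate $\|\psi(v+h)-\psi(v)\|_{\1}\le C_v\|h\|_{\1}$ directly, with no compactness argument.
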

\begin{proof}
Let $v\in \3$ be a position and $h\in \3$ a direction. With $\psi(v)=\Acalinv{v} v$ we get
\begin{align}
\label{deriv-psi-prod-rule}
\mathcal{I}h &= 
 (\tfrac{\mbox{\scriptsize d}}{\mbox{\scriptsize d} v } \langle \mathcal{A}_{|v|} \psi(v) , \cdot \,\rangle ) \,h 
= 
2 \beta \, \mathcal{I} ( \re(v \overline{h}) \psi(v) )
+ \Acal_{|v|} (\psi^{\prime}(v) h).
\end{align}
where the last equality is an application of the product rule together with the observation that $\tfrac{\mbox{\scriptsize d}}{\mbox{\scriptsize d} v } \langle \mathcal{A}_{|v|}w , \cdot \,\rangle\,h = 2 \beta \, \mathcal{I} ( \re(v \overline{h}) w )$. 
From \eqref{deriv-psi-prod-rule} we obtain that $ \psi^{\prime}(v) h \in \3$ fulfills 
\begin{align}
\label{proof-psi-derivative}
\Acal_{|v|}(\psi^{\prime}(v) h ) = \mathcal{I}(h - 2 \beta \,\re(v \overline{h}) \, \psi(v)  ).
\end{align}
From \eqref{proof-psi-derivative} we conclude \eqref{psi-derivative}, which is well-defined because $(h - 2 \beta \,\re(v \overline{h}) \, \psi(v))  \in L^2(\D)$. The latter follows with the H\"older inequality and the Sobolev embedding of $\3$ into $L^6(\D)$. In particular, $\psi^{\prime}(v) : h \mapsto \Acalinv{v} (h - 2 \beta \,\re(v \overline{h}) \, \psi(v)  )$ is a continuous linear operator from $H^1_0(\D)$ to $H^1_0(\D)$, hence, the Fr\'echet derivative of $\psi(v)$ exists. Equality \eqref{psi-derivative-ground-state} now follows from \eqref{psi-derivative} with the choice $v=u$ and accordingly $\psi(u)=\Acalinv{u}u=\lm^{-1} u$.
\end{proof}
Next, we give expressions for $\gamma'(u)$ and $\psi_{\tau}^{\prime}(u)$.
\begin{lemma}
Assume \ref{A1}-\ref{A4}, then 
$\gamma(v)\hspace{-1pt}=\hspace{-1pt} (\Acalinv{v} v , v)_{\0}^{-1}$ and $\psi_{\tau}(v)\hspace{-1pt}=\hspace{-1pt} (1\hspace{-1pt}-\hspace{-1pt}\tau) v \hspace{-1pt}+\hspace{-1pt} \tau \gamma(v) \psi(v)$ are Fr\'echet-differentiable on $H^1_0(\D) \setminus \{ 0 \}$. If $v=u$ is a ground state it holds for $h \in \3$
\begin{align*}
 \gamma^{\prime}(u) h =   - 2 \, \lambda \, \langle \mathcal{I}( h - \tfrac{\beta}{\lambda} \, \re(u \overline{h}) \, u ) , u \rangle 
\end{align*}
and
\begin{align}
\label{psi-tau-u-derivative}
 \psi_{\tau}^{\prime}(u) h  =(1-\tau)h - 2\, \tau \, \langle \mathcal{I} ( \hspace{1pt} h -  \tfrac{\beta}{\lambda} \, \re(u \overline{h}) u )\hspace{1pt} , u \rangle \, u +  \tau \, \lambda \,  \Acalinv{u} \hspace{1pt} (h - 2 \tfrac{\beta}{\lambda} \, \re(u \overline{h}) u \hspace{1pt} ).
\end{align} 
\end{lemma}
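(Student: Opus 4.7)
The plan is to reduce everything to a routine application of the product and chain rules, exploiting Lemma~\ref{psi-derivative_lemma} (which already provides $\psi'(u)$) together with the Euler--Lagrange identity $\Acalinv{u}\Ical u = \lambda^{-1} u$ that follows from $E'(u) = \lambda \Ical u$.

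First I would address well-posedness and differentiability of $\gamma$. Writing $\gamma(v) = (\psi(v),v)_{L^2(\D)}^{-1}$, the map $v \mapsto (\psi(v),v)_{L^2(\D)}$ is Fr\'echet-differentiable on $H^1_0(\D)\setminus\{0\}$ by Lemma~\ref{psi-derivative_lemma} and bilinearity, and is strictly positive there since $\langle \Acal_{|v|} \psi(v), \psi(v)\rangle = (v,\psi(v))_{L^2(\D)}$ and $\Acal_{|v|}$ is coercive. Hence $\gamma$ is differentiable on that set, and the chain rule gives
\begin{align*}
\gamma'(v)h = -\gamma(v)^2 \bigl(\, (\psi'(v)h , v)_{L^2(\D)} + (\psi(v),h)_{L^2(\D)} \,\bigr).
\end{align*}
To specialize to $v = u$, I would use $\psi(u) = \Acalinv{u}\Ical u = \lambda^{-1} u$ (so that $\gamma(u) = \lambda$), together with the self-adjointness of $\Acalinv{u}\Ical$ on $L^2(\D)$, which yields the key identity $(\Acalinv{u}\Ical w, u)_{L^2(\D)} = (w,\Acalinv{u}\Ical u)_{L^2(\D)} = \lambda^{-1}(w,u)_{L^2(\D)}$ for any $w\in L^2(\D)$. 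Inserting the formula \eqref{psi-derivative-ground-state} for $\psi'(u)h$ gives
\begin{align*}
(\psi'(u)h, u)_{L^2(\D)} = \lambda^{-1}\bigl( (h,u)_{L^2(\D)} - \tfrac{2\beta}{\lambda}(\re(u\overline{h})\,u,u)_{L^2(\D)} \bigr),
\end{align*}
and substituting this together with $(\psi(u),h)_{L^2(\D)} = \lambda^{-1}(u,h)_{L^2(\D)}$ and $\gamma(u)^2 = \lambda^2$ collapses the cross-terms to the claimed factor of $2$, producing exactly $\gamma'(u)h = -2\lambda\,\langle \Ical(h - \tfrac{\beta}{\lambda}\re(u\overline{h})u), u\rangle$.

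For $\psi_\tau'(u)$ I would simply apply the product rule to $\tau\gamma(v)\psi(v)$ at $v=u$,
\begin{align*}
\psi_\tau'(u)h = (1-\tau)h + \tau\bigl( \gamma'(u)h \bigr)\psi(u) + \tau \gamma(u)\,\psi'(u)h,
\end{align*}
and substitute $\psi(u) = \lambda^{-1}u$, $\gamma(u) = \lambda$, the previously derived expression for $\gamma'(u)h$, and formula \eqref{psi-derivative-ground-state}. The two $\lambda$-factors cancel in the middle term and the result agrees with \eqref{psi-tau-u-derivative}.

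There is no real obstacle here; this is a bookkeeping exercise. The only subtle points I would make sure to highlight explicitly are the strict positivity of $(\psi(v),v)_{L^2(\D)}$ on $H^1_0(\D)\setminus\{0\}$ (ensuring the reciprocal is differentiable) and the symmetry identity $(\Acalinv{u}\Ical w,u)_{L^2(\D)} = \lambda^{-1}(w,u)_{L^2(\D)}$, which is the mechanism producing the characteristic coefficients $\tfrac{\beta}{\lambda}$ and $\tfrac{2\beta}{\lambda}$ appearing in the two formulas.
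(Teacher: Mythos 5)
Your proposal is correct and follows essentially the same route as the paper's proof: the quotient/product rule for $\gamma$ and $\psi_\tau$, then evaluation at $u$ using $\psi(u)=\lambda^{-1}u$, $\gamma(u)=\lambda$, formula \eqref{psi-derivative-ground-state}, and the $L^2$-self-adjointness of $\Acal_{|u|}$ (equivalently of $\Acalinv{u}$), which is exactly the mechanism the paper uses to collapse the cross-terms. Your explicit remark on the strict positivity of $(\psi(v),v)_{L^2(\D)}$ is a small well-posedness detail the paper leaves implicit, but the argument is the same.
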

\begin{proof}
For $v\in \3 \setminus \{ 0\}$, the Fr\'echet derivative of $\gamma(v)$ in direction $h\in \3$ is
\begin{align}
\label{gamma-1-eqn}
 \gamma^{\prime}(v) h  =  - \gamma(v)^2 \big( \gamma(v)^{-1} \big)' (h)=  - \gamma(v)^2 \left(  ( \psi(v), h )_{\0} + ( \, \psi'(v) h \, , v )_{\0}  \right), 
\end{align}
where we recall $\psi(v)= \Acalinv{v} v$. 
 Evaluating the expression \eqref{gamma-1-eqn} for a ground state $u$, together with $\psi(u) = \lm^{-1} u$, $\gamma(u) = \lambda$ and \eqref{psi-derivative-ground-state}, we have
\begin{align*}
 \gamma^{\prime}(u) h  &=  - \lambda^2 \left( (\Acalinv{u}( \hspace{1pt} h - 2 \tfrac{\beta}{\lambda} \, \re(u \overline{h})u \hspace{1pt} ) , u  )_{\0} +  \lambda^{-1}  ( u, h )_{\0} \right) \\
 & = -\lm \left( \langle \Acal_{|u|} \Acalinv{u} (h - 2 \tfrac{\beta}{\lambda} \, \re(u \overline{h}) u) , u \rangle + \langle \mathcal{I} h , u \rangle \right) 
 \,=   - 2 \, \lambda \,  \langle \mathcal{I} ( \hspace{1pt} h -  \tfrac{\beta}{\lambda} \, \re(u \overline{h}) u) \hspace{1pt} , u \rangle.
\end{align*}
With $\psi_{\tau}(v)= (1-\tau) v + \tau \gamma(v) \psi(v)$ and $\psi(v)=\Acalinv{v} v$, we further obtain
\begin{align*}
 \psi_{\tau}^{\prime}(u) h &= (1-\tau)h + \tau \, (\gamma^{\prime}(u) h) \, \Acalinv{u} u +  \tau \, \gamma(u) \,  \psi^{\prime}(u) h  \\
&= (1-\tau)h - 2\, \tau \,  \langle \mathcal{I} ( \hspace{1pt} h -  \tfrac{\beta}{\lambda} \, \re(u \overline{h}) u) \hspace{1pt} , u \rangle \, u +  \tau \, \lambda \, \psi^{\prime}(u) h \\
&\overset{\eqref{psi-derivative-ground-state}}{=} (1-\tau)h - 2\, \tau \,  \langle \mathcal{I} ( \hspace{1pt} h -  \tfrac{\beta}{\lambda} \, \re(u \overline{h}) u) \hspace{1pt} , u \rangle \, u +  \tau \, \lambda \,  \Acalinv{u} ( \hspace{1pt} (h - 2 \tfrac{\beta}{\lambda} \, \re(u \overline{h}) u \hspace{1pt} ).\\[-3.0em]
\end{align*} 
\end{proof}
Since we want to compute the derivative of $\tilde{\phi}_{\tau}(v)= \Theta_{u}(v) \, \phi_{\tau}(v) = \tfrac{ \Theta_{u}(v)\psi_{\tau}(v)}{\| \psi_{\tau}(v) \|_{L^2(\D)}}$, we need to study two more components, that is, $ \phi_{\tau}^{\prime}(v)$ and $\Theta_u^{\prime}(v)$. For $\phi_{\tau}$ we get straightforwardly for any $v\in H^1_0(\D)$ with $\psi_{\tau}(v) \not=0$ and $h \in \3$ 
\begin{align*}
\phi_{\tau}'(v)h = \frac{\psi_{\tau}'(v)h}{\| \psi_{\tau}(v)\|_{\0}} - \frac{(\psi_{\tau}'(v)h , \psi_{\tau}(v))_{\0} \psi_{\tau}(v)}{\|\psi_{\tau}(v)\|^3_{\0}} .
\end{align*}
In particular, for $v=u$ we obtain with $\psi_{\tau}(u) = u$
\begin{align}
\label{phi-derivative-final}
\phi_{\tau}'(u)h = \psi_{\tau}'(u)h - (\psi_{\tau}'(u)h , u)_{\0} u.
\end{align} 
Here, $\psi'_{\tau}(u)$ is given by \eqref{psi-tau-u-derivative}. It remains to give an expression for $\Theta_{u}^{\prime}$ in $u$.
\begin{lemma} \label{Theta-derivative}  Assume \ref{A1}-\ref{A4}. For a ground state $u$, the Fr\'echet derivative of $\Theta_{u}$ at $u$, for any $h \in \3$ is given by
\begin{align*}
\Theta'_{u}(u)h=\ci \, \im(\theta_{u}'(u)h)
\qquad
\mbox{and}
\qquad
 \theta_{u}'(u)h = \overline{ \int_{\D} \psi_{\tau}'(u)h \, \overline{u} \dx }
\end{align*}
\end{lemma}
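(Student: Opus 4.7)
The plan is to treat the two identities separately, starting with the easier formula for $\theta_{u}^{\prime}(u)h$ and then using a chain rule to derive the one for $\Theta_{u}^{\prime}(u)h$. Since $\theta_{u}(v):=\overline{\int_{\D} \psi_{\tau}(v)\,\overline{u}\dx}$ is the composition of the already Fr\'echet differentiable map $\psi_{\tau}:H^{1}_{0}(\D)\to H^{1}_{0}(\D)$ with the (bounded real-linear) functional $w\mapsto\overline{\int_{\D} w\,\overline{u}\dx}$, the chain rule immediately gives
\begin{align*}
\theta_{u}^{\prime}(v)h \,=\, \overline{\int_{\D}\psi_{\tau}^{\prime}(v)h\,\overline{u}\dx}
\qquad\mbox{for all } v \mbox{ in a neighborhood of } u\mbox{ and } h\in H^{1}_{0}(\D),
\end{align*}
which at $v=u$ yields the claimed formula. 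Here the only thing to check is that the integral-and-conjugation operator is a continuous $\mathbb{R}$-linear map from $H^{1}_{0}(\D)$ to $\mathbb{C}$, which is clear since $u\in L^{2}(\D)$.

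Next I would evaluate $\theta_{u}$ at the reference point. Using $\psi(u)=\Acalinv{u}u=\lambda^{-1}u$ and $\gamma(u)=\lambda$ one obtains $\psi_{\tau}(u)=(1-\tau)u+\tau\gamma(u)\psi(u)=u$, hence
\begin{align*}
\theta_{u}(u)\,=\,\overline{\int_{\D}u\,\overline{u}\dx}\,=\,\overline{\|u\|_{L^{2}(\D)}^{2}}\,=\,1.
\end{align*}
In particular, $\theta_{u}(v)\neq 0$ in a neighborhood of $u$, so $\Theta_{u}(v)=\theta_{u}(v)/|\theta_{u}(v)|$ is well-defined and Fr\'echet differentiable there.

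For the derivative of $\Theta_{u}$, view it as the composition $v\mapsto \theta_{u}(v)\mapsto \theta_{u}(v)/|\theta_{u}(v)|$. For the outer map $F(z):=z/\sqrt{z\overline{z}}$ on $\mathbb{C}\setminus\{0\}$, a direct calculation gives
\begin{align*}
F^{\prime}(z)\delta z \,=\, \frac{\delta z}{|z|}\,-\,\frac{z}{|z|^{3}}\,\re(\overline{z}\,\delta z).
\end{align*}
Evaluating at $z=\theta_{u}(u)=1$ (so $\overline{z}=|z|=1$) this collapses to $F^{\prime}(1)\delta z=\delta z-\re(\delta z)=\ci\,\im(\delta z)$. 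Applying the chain rule with $\delta z = \theta_{u}^{\prime}(u)h$ then produces
\begin{align*}
\Theta_{u}^{\prime}(u)h \,=\, \ci\,\im\bigl(\theta_{u}^{\prime}(u)h\bigr),
\end{align*}
as desired.

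The only conceptual subtlety I anticipate is the bookkeeping around real-linearity: $\theta_{u}^{\prime}(u)$ is an $\mathbb{R}$-linear map $H^{1}_{0}(\D)\to\mathbb{C}$ (not $\mathbb{C}$-linear, due to the complex conjugation) and $F$ is likewise only $\mathbb{R}$-differentiable at $1$; but since $F$ is explicitly computed in terms of $z,\overline z$ and $|z|$, no complication beyond careful separation of real and imaginary parts arises, and the derivation is essentially the elementary chain rule.
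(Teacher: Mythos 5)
Your proof is correct and follows essentially the same route as the paper: both compute $\theta_u'(v)h$ by the chain rule, note $\psi_\tau(u)=u$ so that $\theta_u(u)=1$, differentiate the normalization $z\mapsto z/|z|$ via the quotient rule, and evaluate at $z=1$ to obtain $\delta z-\re(\delta z)=\ci\,\im(\delta z)$. The only cosmetic difference is that you factor out the outer map $F(z)=z/|z|$ explicitly, whereas the paper writes the general formula for $\Theta_u'(v)h$ directly and then plugs in $v=u$.
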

\begin{proof} 
For $v\in H^1_0(\D)$ from a sufficiently small neighborhood of $u$ such that $\theta_{u}(v)\not=0$, we get 
\begin{align*}
\Theta_{u}'(v)h = \frac{\theta_{u}'(v)h}{|\theta_{u}(v)|} - \frac{\re\big(\theta_{u}'(v)h \,\, \overline{\theta_{u}(v)} \, \big) \theta_{u}(v)}{|\theta_{u}(v)|^3} \qquad \mbox{with} \qquad \theta_{u}'(v)h = \overline{ \int_{\D} \psi_{\tau}'(v)h \, \overline{u} \dx} .
\end{align*}
Note that by definition, we have $\theta_{u}(u) =  \overline{\int_{\D} \psi_{\tau}(u) \overline{u} \dx} = \overline{\int_{\D} u \overline{u} \dx} = 1.$ This implies 
\begin{align*}
\Theta'_{u}(u)h= \theta_{u}'(u)h - \re\big(\theta_{u}'(u)h \big) = \ci \, \im(\theta_{u}'(u)h).\\[-3.7em]
\end{align*}
\end{proof}
By collecting the previous results, in particular \eqref{phi-derivative-final} and Lemma \ref{Theta-derivative}, we obtain for the derivative of $\tilde{\phi}_{\tau}(v)= \Theta_{u}(v) \, \phi_{\tau}(v)$ in $u$ that
\begin{align} 
\nonumber  \tilde{\phi}_{\tau}'(u)h &= \Theta_{u}(u) \, \phi_{\tau}'(u)h + \phi_{\tau}(u) \, \Theta_{u}'(u)h = \phi_{\tau}'(u)h + u \, \Theta_{u}'(u)h \\
\label{tilde-phi-derivative-2} &= \psi_{\tau}'(u)h - (\psi_{\tau}'(u)h , u)_{\0} u + \im(\theta'_{u}(u)h) (\ci u) .
\end{align}
From \eqref{tilde-phi-derivative-2} we can see that the problematic eigenvalue $1$ of $\phi_{\tau}'(u)$ to the eigenfunction $\ci u$, is converted (by phase locking through $\Theta_u$) to the eigenvalue $0$ of $\tilde{\phi}_{\tau}^{\prime}(u)$. To see this, first note that $\psi_{\tau}'(u)(\ci u) = \ci u$ which implies, with Lemma \ref{Theta-derivative}, $\theta'_{u}(u)(\ci u)=-\ci$. Exploiting this yields
\begin{align*}
\phi_{\tau}'(u)(\ci u) & \overset{\eqref{phi-derivative-final}} = \psi_{\tau}'(u)(\ci u) - (\psi_{\tau}'(u)(\ci u) , u)_{\0} u = \ci u \, , \qquad \mbox{whereas}\\
\tilde{\phi}_{\tau}'(u) (\ci u) & \overset{\eqref{tilde-phi-derivative-2}}= \psi_{\tau}'(u)(\ci u) - (\psi_{\tau}'(u)(\ci u) , u)_{\0} u + \im(\theta'_{u}(u)(\ci u)) (\ci u)  = \ci u - \ci u =0.
\end{align*}
To simplify the representation of $\tilde{\phi}_{\tau}'(u)$ and its spectrum, we need an additional orthogonality result, stating that all eigenfunctions of $\tilde{\phi}'_{\tau}(u)$ that are not in the kernel are $L^2$-orthogonal to both $u$ and $\ci u$. Note that eigenfunctions in the kernel are not relevant for the spectral radius.
\begin{lemma} \label{eigenvector-orthogonal}
Assume \ref{A1}-\ref{A4} and let $u$ be a ground state. For $\tilde{\phi}'_{\tau}(u):\3 \rightarrow \3$, we consider the eigenvalue problem seeking $v_i \in \3$ and $\tilde{\mu}_i \in \mathbb{R}$ such that 
\begin{align}
\label{ev-problem-tilde-phi-prime}
\tilde{\phi}_{\tau}'(u) v_i = \tilde{\mu}_{i} v_i .
\end{align}
Then, for all eigenvalues $\tilde{\mu}_i \neq 0$ with eigenfunctions $v_i \in \3$ it holds $v_i \in \5$.
\end{lemma}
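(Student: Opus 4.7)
The plan is to exploit the explicit formula \eqref{tilde-phi-derivative-2} for $\tilde{\phi}_{\tau}'(u)$ and test the eigenvalue equation $\tilde{\phi}_{\tau}'(u)v_i = \tilde{\mu}_i v_i$ against $u$ and $\ci u$ in $(\cdot,\cdot)_{L^2(\D)}$. The two identities I shall repeatedly use are the normalization $\| u \|_{L^2(\D)}=1$ and the elementary orthogonality $(u, \ci u)_{L^2(\D)} = \re(\ci \int_\D |u|^2 \dx) = 0$, i.e. $\ci u \in \tangentspace{u}$.

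First I take the $L^2(\D)$-inner product of \eqref{ev-problem-tilde-phi-prime} with $u$. Using \eqref{tilde-phi-derivative-2}, the middle term yields $(\psi_\tau'(u)v_i, u)_{L^2(\D)}\| u\|_{L^2(\D)}^2 = (\psi_\tau'(u)v_i, u)_{L^2(\D)}$, which cancels the first term, while the last term contributes $\im(\theta_u'(u)v_i)(\ci u, u)_{L^2(\D)} = 0$. Hence $\tilde{\mu}_i(v_i,u)_{L^2(\D)} = 0$, and since $\tilde{\mu}_i \neq 0$ we conclude $v_i \in \tangentspace{u}$.

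Next I take the inner product with $\ci u$. The term $(\psi_\tau'(u)v_i, u)_{L^2(\D)}(u,\ci u)_{L^2(\D)}$ vanishes, and $(\ci u, \ci u)_{L^2(\D)} = 1$, so
\begin{align*}
\tilde{\mu}_i (v_i,\ci u)_{L^2(\D)} = (\psi_\tau'(u)v_i, \ci u)_{L^2(\D)} + \im\bigl(\theta_u'(u)v_i\bigr).
\end{align*}
To handle the right-hand side, I will rewrite the complex integral $\int_\D \psi_\tau'(u)v_i \, \overline{u} \dx$ in terms of real pairings. Writing $w := \psi_\tau'(u)v_i$, the identity $\int_\D w\overline{u}\dx = (w,u)_{L^2(\D)} + \ci\,(w,\ci u)_{L^2(\D)}$ (using $(w,\ci u)_{L^2(\D)} = \re(-\ci \int_\D w \overline{u}\dx) = \im \int_\D w \overline{u}\dx$) together with the definition $\theta_u'(u)v_i = \overline{\int_\D w \overline{u}\dx}$ from Lemma \ref{Theta-derivative} gives $\im(\theta_u'(u)v_i) = -(w,\ci u)_{L^2(\D)} = -(\psi_\tau'(u)v_i, \ci u)_{L^2(\D)}$. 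Plugging this back cancels the two surviving terms, leaving $\tilde{\mu}_i(v_i,\ci u)_{L^2(\D)}=0$, and since $\tilde{\mu}_i\neq 0$ we obtain $v_i\in \tangentspace{\ci u}$ as well.

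Combined with the first step this yields $v_i\in \tangentspace{u}\cap\tangentspace{\ci u}$. The only non-routine step is bookkeeping the real/imaginary parts in the second calculation, and in particular recognizing that the phase-locking correction $\im(\theta_u'(u)\,\cdot\,)(\ci u)$ in \eqref{tilde-phi-derivative-2} was introduced precisely so that the $\ci u$-component of $\psi_\tau'(u)v_i$ is annihilated — which is why $\ci u$ is sent to $0$ rather than to itself, and why the $L^2(\D)$-projection onto $\ci u$ closes back to zero.
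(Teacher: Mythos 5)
Your argument is correct and matches the paper's proof essentially step for step: you test the eigenvalue relation \eqref{ev-problem-tilde-phi-prime} against $u$ and $\ci u$ in the real $L^2$-inner product, observe the cancellation in the $u$-direction coming from the Riemannian projection term, and establish the identity $\im(\theta_u'(u)v_i) = -(\psi_\tau'(u)v_i,\ci u)_{L^2(\D)}$ (which is precisely the paper's \eqref{imthetaprimeu}) to close the $\ci u$-direction. Your bookkeeping via the decomposition $\int_\D w\overline{u}\dx = (w,u)_{L^2(\D)} + \ci\,(w,\ci u)_{L^2(\D)}$ is just a slightly different presentation of the same real/imaginary-part manipulation the paper performs directly.
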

\begin{proof}
The eigenvalue problem \eqref{ev-problem-tilde-phi-prime} can be written as
\begin{align*}
( \tilde{\phi}_{\tau}'(u)v_i , w)_{\0} =\tilde{\mu}_{i}( v_i, w)_{\0} \qquad \mbox{ for all} \quad w \in \3.
\end{align*}
$\bullet$ {\it Orthogonality w.r.t. $u$:} Selecting $w= u$ and using $\im(\theta'_{u}(u)v_i) \in \mathbb{R}$ and $u \in \mathbb{S}$ we get
\begin{align*}
\tilde{\mu}_{i}( v_i, u)_{\0} &\overset{\eqref{tilde-phi-derivative-2}}= ( \psi_{\tau}'(u)v_i , u)_{\0} - (\psi_{\tau}'(u)v_i , u)_{\0} + \im(\theta'_{u}(u)v_i) (\ci u , u)_{\0} =0.
\end{align*}
\hspace{10pt}This implies $( v_i, u)_{\0}=0$ if $\mu_i \not=0$.\\[0.5em]
$\bullet$ {\it Orthogonality w.r.t. $\ci u$:} Selecting $w= \ci u$ we get
\begin{eqnarray*}
\lefteqn{\tilde{\mu}_{i}( v_i, \ci u)_{\0} 
 \,\overset{\eqref{tilde-phi-derivative-2}}=\, ( \psi_{\tau}'(u)v_i , \ci u)_{\0} \hspace{-1pt}-\hspace{-1pt} (\psi_{\tau}'(u)v_i , u)_{\0}(u, \ci u)_{\0} \hspace{-1pt}+\hspace{-1pt} \im(\theta'_{u}(u)v_i) (\ci u , \ci u)_{\0} } \\
&=&( \psi_{\tau}'(u)v_i , \ci u)_{\0} + \im(\theta'_{u}(u)v_i) (\ci u , \ci u)_{\0} \overset{\eqref{imthetaprimeu}}{=}0 \,,\hspace{170pt}
\end{eqnarray*}
\hspace{8pt}where we have used that 
\begin{eqnarray}
\label{imthetaprimeu}\lefteqn{\im(\theta'_{u}(u)v_i) = \im\big( \overline{ \int_{\D} \psi'_{\tau}(u)v_i \, \overline{u} \dx} \big)= - \im\big( \int_{\D} \psi'_{\tau}(u)v_i \, \overline{u} \dx \big) }\\
\nonumber&=& \re\big( \ci \int_{\D} \psi'_{\tau}(u)v_i \, \overline{ u} \dx \big) = -\re\big( \int_{\D} \psi'_{\tau}(u)v_i \, \overline{\ci u} \dx \big) = -( \psi_{\tau}'(u)v_i , \ci u)_{\0}.\\[-2.5em]
\nonumber
\end{eqnarray}
\end{proof}
With the collection of lemmas above, we are now ready to prove Theorem \ref{linear_rate_theorem}.
\subsection{Local convergence rates}
We start with quantifying the convergence properties of the auxiliary iteration $\tilde{u}^{n+1} =   \tilde{\phi}_{\tau}(\tilde{u}^n)$ given by \eqref{auxi-scheme}. For that we apply the Ostrowski theorem in Hilbert spaces which states, adapted to our setting, the following (see \cite{AHP21NumMath,Shi81} for a proof).
\begin{lemma}[Ostrowski]
\label{Ostrowski-result}
Assume \ref{A1}-\ref{A4}. Let $u \in \mathbb{S}$ be a ground state and consider the fixed-point function $\tilde{\phi}_{\tau}$ given by \eqref{def-tilde-phi-tau-v} with iterations $ \tilde{u}^{n+1}:=\tilde{\phi}_{\tau}(  \tilde{u}^{n} )$. If the spectral radius $\rho^{\ast}:=\rho(\,\tilde{\phi}_{\tau}^{\prime}(u)\,)$ of $\tilde{\phi}_{\tau}^{\prime}(u)$ fulfills $\rho^{\ast}<1$, then 
for every $\eps>0$ there is an open neighborhood $S_{\eps} \subset H^1_0(\D)$ of $u$ and a constant $C_{\eps}>0$ such that
$$ \| u - \tilde{u}^{n} \|_{H^1(\D)} \leq C_{\eps} \, | \rho^* + \eps |^n \, \| u - \tilde{u}^{0} \|_{H^1(\D)} $$
for all starting values $\tilde{u}^{0} \in S_{\eps}$ and $n\ge 1$. 
\end{lemma}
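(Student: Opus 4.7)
The plan is to recast the claim as a standard Banach fixed-point argument after replacing the $H^1$-norm by an equivalent norm in which $\tilde{\phi}_\tau'(u)$ becomes almost a contraction. First I would fix $\eps>0$ with $\rho^\ast+\eps<1$ (otherwise the bound is vacuous). Since $\tilde{\phi}_\tau'(u)$ is a bounded linear operator on $H^1_0(\D)$ with spectral radius $\rho^\ast$, Gelfand's formula gives $\|\tilde{\phi}_\tau'(u)^k\|_{H^1(\D)}^{1/k}\to\rho^\ast$, so that the series
\[
\|v\|_\eps^2 := \sum_{k=0}^\infty (\rho^\ast+\tfrac{\eps}{2})^{-2k}\,\|\tilde{\phi}_\tau'(u)^{k}v\|_{H^1(\D)}^2
\]
converges and defines a Hilbertian norm on $H^1_0(\D)$. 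By construction $\|\cdot\|_\eps$ is equivalent to $\|\cdot\|_{H^1(\D)}$ with some constants $c_\eps,C_\eps>0$, and a telescoping computation shows $\|\tilde{\phi}_\tau'(u)\|_\eps\le\rho^\ast+\tfrac{\eps}{2}$.

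Next I would invoke Fr\'echet differentiability of $\tilde{\phi}_\tau$ at the fixed point $u$, as established via Lemmas \ref{psi-derivative_lemma}--\ref{Theta-derivative} together with the identity $\tilde{\phi}_\tau(u)=u$. For any $\eta>0$ there is an $H^1$-neighborhood $S_\eta$ of $u$ on which the Fr\'echet remainder satisfies
\[
\|\tilde{\phi}_\tau(v)-u-\tilde{\phi}_\tau'(u)(v-u)\|_{H^1(\D)} \le \eta\,\|v-u\|_{H^1(\D)}.
\]
Transferring this bound to $\|\cdot\|_\eps$ using the equivalence constants and choosing $\eta$ small enough so that the remainder contributes at most $\eps/2$ to the Lipschitz constant, the triangle inequality yields
\[
\|\tilde{\phi}_\tau(v)-u\|_\eps \le (\rho^\ast+\eps)\,\|v-u\|_\eps \qquad\text{for all } v\in S_\eta.
\]
In particular, shrinking $S_\eta$ to a $\|\cdot\|_\eps$-ball $S_\eps\subset S_\eta$ makes $\tilde{\phi}_\tau$ a strict $\|\cdot\|_\eps$-contraction that maps $S_\eps$ into itself, so the iterates $\tilde{u}^{n+1}=\tilde{\phi}_\tau(\tilde{u}^n)$ are well-defined and remain in $S_\eps$. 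Iterating the contraction estimate gives $\|\tilde{u}^n-u\|_\eps\le(\rho^\ast+\eps)^n\|\tilde{u}^0-u\|_\eps$, and returning to the $H^1$-norm via $\|\cdot\|_\eps\sim\|\cdot\|_{H^1(\D)}$ produces the prefactor $C_\eps$ in the statement.

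The main technical point is twofold: constructing the equivalent Hilbertian norm in which $\tilde{\phi}_\tau'(u)$ has operator norm arbitrarily close to the spectral radius, and choosing $S_\eps$ small enough so that the Fr\'echet remainder is uniformly controlled while the map preserves $S_\eps$. Both ingredients are standard, but the quantitative tracking of $C_\eps$ through the norm equivalence is what ultimately determines its dependence on $\eps$ and on the spectral geometry of $\tilde{\phi}_\tau'(u)$; this is precisely the content of the Hilbert space formulation of the Ostrowski theorem as in \cite{AHP21NumMath,Shi81}.
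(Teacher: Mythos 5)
Your proposal is correct, but it differs from the paper in that the paper does not prove Lemma \ref{Ostrowski-result} at all: it is stated as an adaptation of the Ostrowski theorem and the proof is delegated to \cite{AHP21NumMath,Shi81}. What you have written is essentially a self-contained reconstruction of that cited result: the weighted-series renorming $\|v\|_\eps^2=\sum_k(\rho^\ast+\tfrac{\eps}{2})^{-2k}\|\tilde{\phi}_{\tau}'(u)^k v\|_{H^1(\D)}^2$ converges by Gelfand's formula, the index-shift computation indeed gives $\|\tilde{\phi}_{\tau}'(u)\|_\eps\le\rho^\ast+\tfrac{\eps}{2}$, and since $\tilde{\phi}_{\tau}(u)=u$ the Fr\'echet remainder (differentiability being available from Lemmas \ref{psi-derivative_lemma}--\ref{Theta-derivative}) can be absorbed into the remaining $\eps/2$ on a small $\|\cdot\|_\eps$-ball, which is then invariant under $\tilde{\phi}_{\tau}$, so the iteration stays well-defined and the geometric estimate follows, with $C_\eps$ coming from the norm equivalence. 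Two cosmetic points: the case $\rho^\ast+\eps\ge 1$ is not literally vacuous but follows from the result for a smaller $\eps'$ with $\rho^\ast+\eps'<1$; and the spectral radius must be understood via Gelfand's formula (equivalently, via complexification of the real-linear operator $\tilde{\phi}_{\tau}'(u)$), which is the notion used later when the spectrum is identified with the eigenvalues $(1-\tau)+\tau\mu_i$. The trade-off is the usual one: the paper's citation keeps the exposition short, while your argument makes the lemma self-contained, uses only elementary renorming plus differentiability at the fixed point, and makes explicit how $C_\eps$ and $S_\eps$ depend on $\eps$ through the equivalence constants.
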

In order to apply Lemma \ref{Ostrowski-result}, we need to verify $\rho^{\ast}=\rho(\,\tilde{\phi}_{\tau}^{\prime}(u)\,)<1$, which is done in the following lemma.
\begin{lemma}
\label{lemma-estimates-rho-ast}
Assume \ref{A1}-\ref{A4}, let $u \in \mathbb{S}$ be a locally quasi-unique ground state and let $\tilde{\phi}_{\tau}$ by the fixed-point function from \eqref{def-tilde-phi-tau-v}. Then the spectral radius $\rho^{\ast}:=\rho(\,\tilde{\phi}_{\tau}^{\prime}(u)\,)$ is given by
\begin{align*}
\rho^{\ast} = \max_{i \in \mathbb{N} }\, | (1-\tau) + \tau \mu_i |,
\end{align*}
where $\mu_i$ denote the eigenvalues to \eqref{weighted-evp1}. If $\mu_1$ is the largest eigenvalue in magnitude, then
\begin{align}
\label{tau-conditions}
\rho^{\ast} < 1 \qquad \mbox{for all }  \tau \in 
\begin{cases}
(0,  1 +  \frac{\lambda_2 - \lambda_1}{\lambda_2+\lambda_1} ) 
&\mbox{if } \mu_1 > 0, \\
(0 , 1 + \frac{\delta_1}{2\lambda_1+\delta_1})
&\mbox{if } \mu_1 < 0,
\end{cases}
\end{align}
and where $\delta_1>0$ is as in Lemma \ref{lemma-weighted-eigenvalueproblem}.
\end{lemma}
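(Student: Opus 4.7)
My plan is to identify the nonzero spectrum of $\tilde{\phi}_{\tau}^{\prime}(u)$ with the image of the spectrum of the weighted eigenvalue problem \eqref{weighted-evp1} under the affine map $\mu \mapsto (1-\tau) + \tau \mu$, and then to translate the bounds in Lemma \ref{lemma-weighted-eigenvalueproblem} into admissible step sizes. The first observation is that $\tilde{\phi}_{\tau}^{\prime}(u)$ actually maps all of $\3$ into $\5$: the two orthogonality computations in the proof of Lemma \ref{eigenvector-orthogonal} do not make use of the eigenvalue relation. Consequently, every nonzero eigenvalue of $\tilde{\phi}_{\tau}^{\prime}(u)$ admits an eigenfunction in $\5$, and the eigenvalue $0$ is already realized on the span of $u$ and $\ci u$.

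For $v_i \in \5$, I would test the eigenvalue identity $\tilde{\phi}_{\tau}^{\prime}(u) v_i = \tilde{\mu}_i v_i$ against arbitrary $w \in \5$ with the bilinear form $\langle \Acal_{|u|}\cdot,\cdot\rangle$. Since $\langle \Acal_{|u|} u, w\rangle = \lambda (u,w)_{\0} = 0$ and likewise $\langle \Acal_{|u|} \ci u, w\rangle = 0$ for $w \in \5$, the two correction terms in \eqref{tilde-phi-derivative-2} vanish, leaving only $\langle \Acal_{|u|}\psi_{\tau}^{\prime}(u)v_i,w\rangle$. Substituting \eqref{psi-tau-u-derivative} and exploiting $\Acal_{|u|}\Acalinv{u} = \mathcal{I}$ collapses this to
\begin{align*}
\langle \Acal_{|u|}\psi_{\tau}^{\prime}(u)v_i,w\rangle = (1-\tau)\,\langle \Acal_{|u|}v_i,w\rangle + \tau\,(\lambda v_i - 2\beta\re(u\overline{v_i})u,w)_{\0}.
\end{align*}
By \eqref{weighted-evp1}, the last summand equals $\tau\mu_i\langle \Acal_{|u|}v_i,w\rangle$, which yields $\tilde{\mu}_i = (1-\tau)+\tau\mu_i$. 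For the reverse direction, note that $\tilde{\phi}_{\tau}^{\prime}(u)v_i - ((1-\tau)+\tau\mu_i)v_i$ lies in $\5$ and is $\Acal_{|u|}$-orthogonal to $\5$, so coercivity of $\Acal_{|u|}$ forces it to vanish. Together this yields $\rho^{\ast} = \max_i |(1-\tau)+\tau\mu_i|$.

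For the step-size conditions, a direct computation shows that $|(1-\tau)+\tau\mu_i|<1$ is equivalent to $0 < \tau < 2/(1-\mu_i)$, where the denominator is positive since $\mu_i < 1$ by Lemma \ref{lemma-weighted-eigenvalueproblem}. The binding constraint therefore arises from the most negative $\mu_i$. If $\mu_1 > 0$ is maximal in magnitude, all $\mu_i$ lie in $[-\mu_1,\mu_1] \subseteq [-\lambda_1/\lambda_2,\lambda_1/\lambda_2]$, so $1-\mu_i \le (\lambda_1+\lambda_2)/\lambda_2$ and the sufficient bound becomes $\tau < 2\lambda_2/(\lambda_1+\lambda_2)$. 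If $\mu_1 < 0$ is maximal in magnitude, the binding constraint is directly $\tau < 2/(1+|\mu_1|)$. The main subtlety I anticipate is the bidirectional matching of the two spectra; the forward direction rests on Lemma \ref{eigenvector-orthogonal}, while the reverse combines the fact that $\tilde{\phi}_{\tau}^{\prime}(u)$ maps into $\5$ with the coercivity of $\Acal_{|u|}$ to upgrade a weak identity on $\5$ into a pointwise equality in $\3$.
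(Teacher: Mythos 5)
Your proposal is correct and takes essentially the same route as the paper's proof: restrict to the nonzero spectrum via Lemma \ref{eigenvector-orthogonal}, use \eqref{tilde-phi-derivative-2} and \eqref{psi-tau-u-derivative} to identify the nonzero eigenvalues of $\tilde{\phi}_{\tau}^{\prime}(u)$ with $(1-\tau)+\tau\mu_i$, and then invoke the bounds of Lemma \ref{lemma-weighted-eigenvalueproblem} with the same two-case analysis in $\tau$. The only differences are cosmetic: you test the eigenvalue relation with the $\langle \Acal_{|u|}\,\cdot\,,\cdot\rangle$-form (so that \eqref{weighted-evp1} appears directly) rather than with the $L^2$-inner product, and you make the two-way correspondence explicit through the range property of $\tilde{\phi}_{\tau}^{\prime}(u)$ and coercivity, a point the paper treats more tersely.
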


\begin{proof}
First, note that the spectrum of $\tilde{\phi}'_{\tau}(u)$ can only contain the eigenvalues $\tilde{\mu}_i $ in \eqref{ev-problem-tilde-phi-prime} and the value $0$, cf. Appendix \ref{appendix-A}.
Since $0$ (respectively the eigenvalue $0$) is not relevant for determining the spectral radius, Lemma \ref{eigenvector-orthogonal} allows us to restrict the spectrum of  $\tilde{\phi}'_{\tau}(u)$ to $\5$, i.e., to the orthogonal complement of $u$ and $\ci u$. Note that for all $\tilde{\mu}_i \not =0$ we have $0 = \tilde{\mu}_i ( v_i , \alpha u + \beta \ci u )_{\0} = ( \tilde{\phi}^{\prime}_{\tau}(u)v_i , \alpha u + \beta \ci u )_{\0}$, for any $\alpha,\beta\in \R$. Hence, for $\tilde{\mu}_i \not=0$, the eigenvalue problem \eqref{ev-problem-tilde-phi-prime} reduces to: find $\tilde{\mu}_i \in \mathbb{R} \setminus \{0\}$ and $v_i \in \5$ with
\begin{align}
\label{EVp-tilde-phi}
 ( \tilde{\phi}^{\prime}_{\tau}(u)v_i , w )_{\0} =  \tilde{\mu}_i ( v_i , w )_{\0} \qquad \mbox{for all } w\in \5 .
\end{align}
When both $v_i$ and  $w$ belong to $\5$, the left hand side $( \tilde{\phi}^{\prime}_{\tau}(u)v_i , w )_{\0}$ simplifies to
\begin{eqnarray}
\nonumber \lefteqn{( \tilde{\phi}^{\prime}_{\tau}(u)v_i , w )_{\0} \,\overset{\eqref{tilde-phi-derivative-2}} =\, (\psi_{\tau}'(u)v_i , w)_{\0} \hspace{-2pt}-\hspace{-2pt} (\psi_{\tau}'(u)v_i , u)_{\0} (u,w)_{\0} \hspace{-2pt}+\hspace{-2pt} \im(\theta'_{u}(u)v_i) (\ci u,w)_{\0} }\\
\label{spectrum-of-tilde-phi}   &=& (\psi_{\tau}'(u)v_i , w)_{\0} \overset{\eqref{psi-tau-u-derivative}}{=}  (1-\tau) ( v_i , w )_{\0} + \tau \,\lambda \,( \Acalinv{u} ( \hspace{1pt} v_i -  2 \tfrac{\beta}{\lambda} \, \re(u \overline{v_i}) u \hspace{1pt} )  , w )_{\0}. \hspace{50pt}
\end{eqnarray}
Combining \eqref{EVp-tilde-phi} and \eqref{spectrum-of-tilde-phi} we see that $\tilde{\mu}_i$ is an eigenvalue of \eqref{EVp-tilde-phi} if and only if $(\tilde{\mu}_i -1 +\tau)/\tau$ is an eigenvalue of $ \Acalinv{u} ( \hspace{1pt}\lambda v_i -  2 \beta \, \re(u \overline{v_i}) u \hspace{1pt} ) \vert_{\5} $. However, the latter is just the operator in the weighted eigenvalue problem \eqref{weighted-evp1} from Lemma \ref{lemma-weighted-eigenvalueproblem} with eigenvalues $\mu_i$. Hence, we have $\tilde{\mu}_i = (1-\tau)+\tau \mu_i$, where, again due to Lemma \ref{lemma-weighted-eigenvalueproblem}, we have $-1 < \tfrac{-\lambda_1}{\lambda_1+\delta_1} \leq  \mu_i \le \frac{\lambda_1}{\lambda_2} < 1$. We can now distinguish two cases.\\[0.3em]
{\it Case 1:} If $\mu_1 > 0$, then $|\mu_i| \le \frac{\lambda_1}{\lambda_2}$ for all $i\in \mathbb{N}$ and hence
\begin{align*}
1- (1+ \tfrac{\lambda_1}{\lambda_2}) \tau  \le (1-\tau) + \mu_i \tau  \le 1+ (\tfrac{\lambda_1}{\lambda_2} -1 )\tau .
\end{align*}
Here we have $-1 < 1+ (\tfrac{\lambda_1}{\lambda_2} -1 )\tau < 1$ if $0 < \tau < 2 /( 1 -\tfrac{\lambda_1}{\lambda_2} )$ and we have $-1 < 1- (1+ \tfrac{\lambda_1}{\lambda_2}) \tau < 1$ if $\tau < 2/(1+ \tfrac{\lambda_1}{\lambda_2})= 2\lambda_2 /(\lambda_1+\lambda_2)$. Since the second condition is stronger than the first one, we conclude $|(1-\tau) + \mu_i \tau | <1$ for all $\tau \in (0,2\lambda_2 /(\lambda_1+\lambda_2))$.\\[0.3em]
{\it Case 2:} If $\mu_1 < 0$, then we have
\begin{align}
\label{proof_case_2}
(1- \tau) + \tau \mu_1  \le (1-\tau) + \mu_i \tau  \le 1+ (\tfrac{\lambda_1}{\lambda_2} -1 )\tau < 1 \qquad \mbox{for all }
\tau>0,
\end{align}
 where we recall  $-1 < 1+ (\tfrac{\lambda_1}{\lambda_2} -1 )\tau < 1$ for $0 < \tau < 2 /( 1 -\tfrac{\lambda_1}{\lambda_2} )$ from Case 1. As the left hand side of \eqref{proof_case_2} is always strictly smaller than $1$ (for $\tau>0$), we only need to check for which $\tau$ we have $(1- \tau) + \tau \mu_1>-1$. Here we find that this is fulfilled for all $0< \tau  < 2/(1-\mu_1)$. Since $ 2 /( 1 -\tfrac{\lambda_1}{\lambda_2} )>2$ and $ 2/(1-\mu_1)<2$, only the latter condition is relevant. A bound for the latter condition is obtained from $- \mu_1 \le \tfrac{\lambda_1}{\lambda_1+\delta_1}$, which implies $1 + \tfrac{\delta_1}{2\lambda_1+\delta_1}  \le \tfrac{2}{1 - \mu_1}$ and therefore convergence for all $\tau < 1 + \tfrac{\delta_1}{2\lambda_1+\delta_1}$.
\end{proof}
We are now ready to collect the results to prove  Theorem \ref{linear_rate_theorem}.
\begin{proof}[Proof of Theorem \ref{linear_rate_theorem}]
By combining Lemma \ref{Ostrowski-result} and Lemma \ref{lemma-estimates-rho-ast}, we obtain 
\begin{align*}
\| u - \tilde{u}^{n} \|_{H^1(\D)} \leq C_{\eps} \, | \rho^* + \eps |^n \, \| u - u^0 \|_{H^1(\D)} ,
\end{align*}
where $\tilde{u}^n$ denote the auxiliary iterations \eqref{auxi-scheme} with $\tilde{u}^0=u^0$ from a sufficiently small neighborhood of $u$ and $\tau$ as in \eqref{tau-conditions}. Now we can apply Lemma \ref{lemma-relation-un-tilde-un} (which is admissible since $\theta_{u}(u^n) \not= 0$ close to $u$) to obtain the relation $\Theta_{u}(u^{n-1}) u^{n} = \tilde{u}^{n}$ for the original iterates. Consequently
\begin{align}
\nonumber\inf_{\phase_n \in [-\pi,\pi)} 
\|  \exp(\ci \phase_n) \, u^{n}- u \|_{\1} &\le \| \Theta_{u}(u^{n-1}) u^{n} - u \|_{\1}  =  \| \tilde{u}^{n} - u \|_{\1} \\
\label{localestimate-un-phase} & \le C_{\eps} \, | \rho^* + \eps |^n \, \| u - u^0 \|_{H^1(\D)}.
\end{align}
The estimate for the densities follows by Sobolev embedding $L^4(\D) \subset H^1(\D)$ ($d\le3$) as
\begin{eqnarray*}
\lefteqn{ \|  |u^{n}|^2 - |u|^2 \|_{L^2(\D)}^2 \,\,=\,\,  \|  \,|\Theta_{u}(u^{n-1}) u^{n}|^2 - |u|^2 \,\|_{L^2(\D)}^2 } \\
&=&  \| (|\Theta_{u}(u^{n-1}) u^n| -|u|)^2 (|u^n| + |u|)^2 \|_{L^1(\D)} \,\,\le\,\,  \| \tilde{u}^n- u \|_{L^4(\D)}^2  \| |u^n| +|u| \|_{L^4(\D)}^2 \\
&\le& C  \| \tilde{u}^n- u \|_{\1}^2 (  \| u^n \|_{H^1(\D)}^2 + \| u \|_{H^1(\D)}^2 ),
\end{eqnarray*}
where $C>0$ depends on the embedding constant and $\| u^n \|_{H^1(\D)}$ is uniformly bounded depending on the initial energy level. The estimate for the densities is finished with \eqref{localestimate-un-phase}.
\end{proof}
%
%
%

\section{Numerical experiments}
\label{section-numerical-exp}
We conclude with a numerical validation of our theoretical findings. As test setting for this section we consider the energy minimization problem \eqref{minimization-problem} on $\D =[-6,6]^2$, with the harmonic trapping potential 
$V(x,y)=\tfrac{1}{2} \big( (0.9 \, x)^2 +(1.2 \, y)^2 \big)$, the repulsion parameter $\beta=100$ and angular velocity $\Omega=1.2$. With these choices, assumptions \ref{A1}-\ref{A4} are fulfilled. The problem is spatially discretized using $\mathbb{P}^1$-Lagrange finite elements on a uniform mesh with $(2^8 -1)^2$ degrees of freedom. To calculate a sufficiently accurate reference value in this setting, we use a Riemannian conjugate gradient method \cite{AHYY24} with which we obtain the energy of the ground state as $E(u) \approx 1.64547132 $ and the corresponding ground state eigenvalue as $\lm \approx 4.451867515$. Note that both values can still change on finer meshes. The corresponding residual $\|-\Delta u + Vu - \Omega \mathcal{L}_3 u + \beta |u|^2u -\lm u\|_{L^{\infty}(\D)}$ is $9.26975 \times 10^{-10}$ (representing the first order condition $E^{\prime}(u) - \lambda \mathcal{I} u=0$). The computed ground state density $|u|^2$ is depicted in Figure $\ref{plot_density}$ (left).

Before studying the iterative scheme, let us confirm a claim that we made earlier in the paper, which is, that the ground state eigenvalue $\lambda$ does no longer appear at the bottom of the spectrum of the $u$-linearized GP operator $\mathcal{A}_{|u|}$ (as it would be the case for $\Omega=0$ and which was analytically exploited in previous convergence proofs, cf. \cite{CLLZ24,PH24,HeP20,Zhang2022}). Indeed, the spectrum of $\mathcal{A}_{|u|}$ is depicted in Table \ref{Au_spectrum} and we find $\lambda$ at the 17th position (ordered in magnitude) with a large gap to the smallest eigenvalue of $\mathcal{A}_{|u|}$. On the contrary, we also computed the first eigenvalues of $E^{\prime\prime}(u)\vert_{\tangentspace{u}}$. As predicted by Lemma \ref{coercive_lemma}, the ground state eigenvalue $\lambda=\lambda_1\approx 4.451867515$ appears at the bottom of the spectrum with eigenfunction $\ci u$. Furthermore, there is a spectral gap after the first eigenvalue and we computed the second smallest eigenvalue with $\lambda_2=4.463959517$, cf. Table \ref{Esecu_spectrum}.

\begin{table}[h!]
    \centering
    \begin{tabular}{|c|c|c|c|c|c|c|}
        \hline
        $i$ & 1 &  4 &  16 & \cellcolor[gray]{.8} 17 & 23  \\ \hline
        $\tilde{\lambda}_i$ & 3.8044144594   & 3.908068058   &   4.4401477007  & \cellcolor[gray]{.8} 4.451867515  & 4.782714784  \\ \hline
    \end{tabular}
    \caption{Selected eigenvalues $\tilde{\lambda}_i$ of $\mathcal{A}_{|u|}$ (ordered in ascending order based on magnitude). The ground state eigenvalue $\lambda$ appears in the spectrum as 17th eigenvalue (gray) of $\mathcal{A}_{|u|}$.}
    \label{Au_spectrum}
\end{table}

\begin{table}[h!]
    \centering
    \begin{tabular}{|c|c|c|c|c|c|c|}
        \hline
        $i$ & \cellcolor[gray]{.8}1 &  2 &  3 &  4 & 5  \\ \hline
        $\lambda_i$ & \cellcolor[gray]{.8}4.451867515   & 4.463959517   &   4.475871108  & 4.485968615  & 4.503955318  \\ \hline
    \end{tabular}
    \caption{The first five eigenvalues $\lambda_i$ of $E^{\prime\prime}(u)_{\vert \tangentspace{u}}$ (ordered in ascending order based on magnitude). The ground state eigenvalue $\lambda$ appears at the bottom of the spectrum (gray).}
    \label{Esecu_spectrum}
\end{table}
\begin{figure}[h!]
  \centering
  \begin{subfigure}[h]{0.5\textwidth}
    \centering
   \includegraphics[width=0.98\textwidth]{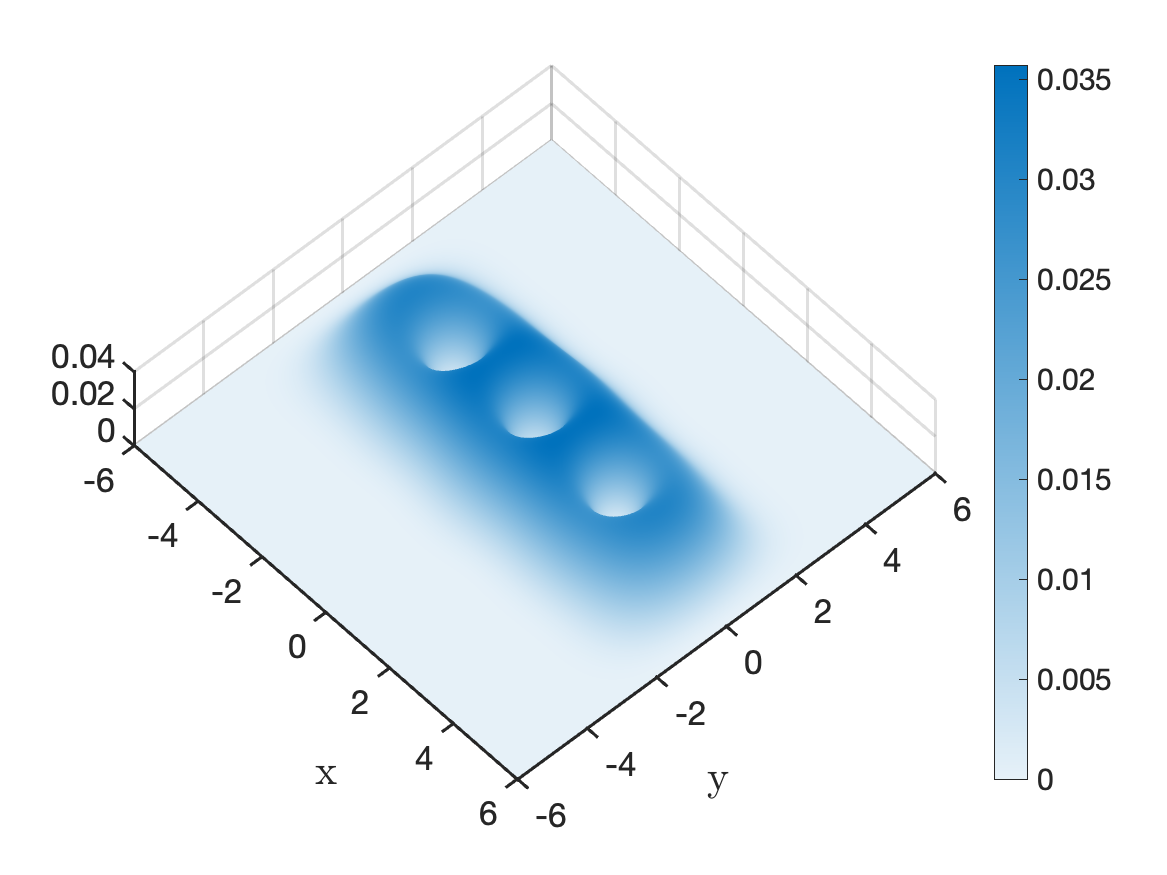}
  \end{subfigure}
  \hfill
  \begin{subfigure}[h]{0.45\textwidth}
     \centering
     \definecolor{c1}{rgb}{0.2,0.5,1.0}
      \definecolor{color4}{rgb}{0.580392156862745,0.403921568627451,0.741176470588235}
     \begin{tikzpicture}
  \begin{axis}[
       legend style={fill=gray!20, font=\small,
        nodes={scale=0.6, transform shape}},
        height = 0.9\textwidth,
        width = 0.9\textwidth,
        xmax   = 2300,  
        xmin   = 600,
        ymax = 10^(-3), 
        ymin = 10^(-9),
        xtick={1000,1500,2000},
        scaled ticks=false,
        yticklabel style={/pgf/number format/sci, /pgf/number format/precision=0}, 
        ytick={10^(-9),10^(-8),10^(-7),10^(-6),10^(-5),10^(-4)}, 
         xlabel=iteration $n$,
         ylabel=energy error $E(u^n)-E(u)$,    
         xlabel style={font=\small}, 
         ylabel style={font=\small},
         ymode=log
  ]
    \addplot [thick, color=blue] table {adp_vector_new.txt};
    \addplot [thick, color=cyan] table {tau1_vector_new.txt};
    \legend{
adaptive $\tau$,
$\tau=1$,
$E(u)$,
location=northeast,
color=[0 0 0]
}
  \end{axis}
\end{tikzpicture}

  \end{subfigure}
\caption{Left: The ground state density $|u|^2$. Right: Number of iterations versus the energy error $E(u^n)-E(u)$ for the Riemannian gradient method with adaptive $\tau$ and fixed $\tau=1$.
  }
  \label{plot_density}
\end{figure}


\begin{figure}[h!]
  \centering
   \definecolor{c1}{rgb}{0.2,0.5,1.0}
      \definecolor{color4}{rgb}{0.580392156862745,0.403921568627451,0.741176470588235}
      \begin{tikzpicture}
 \begin{axis}[
       legend style={fill=gray!20, font=\small,
        nodes={scale=0.6, transform shape}},
        height = 0.4\textwidth,
        width = 0.9\textwidth,
        xmax   = 8000,  
        xmin   = 0,
        ymax = 10^(-1), 
        ymin = 10^(-9),
        xtick={2000,7000},
        scaled ticks=false,
        yticklabel style={/pgf/number format/sci, /pgf/number format/precision=0}, 
        ytick={10^(-9),10^(-8),10^(-7),10^(-6),10^(-5),10^(-4),10^(-3),10^(-2)}, 
         xlabel=iteration $n$,
         ylabel=energy error $E(u^n)-E(u)$,    
         xlabel style={font=\small}, 
         ylabel style={font=\small},
         ymode=log
  ]
    \addplot [line width=1pt, color=blue] table {adp_vector.txt};
    \addplot [line width=1pt, color=red] table {M2AN_H1_data.txt};
    \legend{
$a_u$-adaptive metric,
$H^1$-metric,
location=northeast,
color=[0 0 0]
}
  \end{axis}
\end{tikzpicture}

\caption{Number of iterations versus the energy error $E(u^n)-E(u)$ for the Riemannian gradient method with $a_u$-adaptive metric and fixed $H^1$-metric.
  }
  \label{plot_error}
\end{figure}
To initialize our iterative method \eqref{method} we follow \cite[Section 6.1]{BWM05} and select the starting value $u^0$ as the $L^2$-normalized interpolation of the function $u_0(x,y)=\tfrac{(x + \ci y)}{\sqrt{\pi}}  \, \exp({\frac{-(x^2 + y^2)}{2}})$.
We investigate two different choices for $\tau_n$: the adaptively computed optimal step $\tau_n$ for each iteration according to \eqref{tau-n-optimal} and a uniform step size $\tau_n=1$ which corresponds to the inverse iteration $u^{n+1} = \tfrac{ \Acalinv{u^n}u^n }{\| \Acalinv{u^n}u^n \|_{\0}}$. In Figure \ref{plot_density} (right) we observe that the adaptive method performs, as expected, significantly better than the method with fixed $\tau$. For example, if the iterations are stopped if the energy difference $|E(u^n) - E(u)|$ reaches a tolerance of $10^{-9}$, then the (adaptive) Riemannian gradient method requires $1292$ iterations, whereas the inverse iteration ($\tau_n=1$) requires $1958$ iterations to reach a tolerance, which is around 51.5\% more.
Note here that the computational cost per iteration for the adaptive and the non-adaptive method are almost the same when an adequate implementation is used. We refer to Appendix \ref{appendix-B} for details.

Next, we demonstrate the impact of the selected metric by comparing the Riemannian gradient method in the energy-adaptive metric, i.e. \eqref{method}-\eqref{tau-n-optimal}, with the corresponding realization in the standard $H^1$-metric, that is, the general Riemannian (Sobolev) gradient method with the choice $X=H^1_0(\D)$ and the standard inner product $(v,w)_{X} = (v,w)_{L^2(\D)} + (\nabla v , \nabla w)_{L^2(\D)}$. Both realizations use optimal steps $\tau_n$ and require the solving of linear elliptic problems in each iteration. The corresponding results are depicted in Figure \ref{plot_error}, where we observe that the convergence towards the correct limit energy is significantly faster in the energy-adaptive metric. More precisely, the energy-adaptive gradient method requires 1292 iterations to reach an energy error of $10^{-9}$, whereas the $H^1$-gradient method needs 6819 iterations for the same error. It should be noted that both methods show an intermediate energy plateau. This is due to an initial convergence to an excited state, i.e., a critical point of $E$ on $\mathbb{S}$ that is not a ground state. Once the iterates are sufficiently close to the excited state, the iterates \quotes{jump} over the critical point and start to converge to the correct ground state.

Finally, to validate the local convergence results from Theorem \ref{linear_rate_theorem}, we have closer look at the inverse iteration for which we made precise predictions about its local convergence behavior according to estimate \eqref{convergence-inverse-iteration}. Recall here that if $\mu_1>0$ then Theorem \ref{linear_rate_theorem} guarantees linear convergence for $\tau_n=1<  1 +  \frac{\lambda_2 - \lambda_1}{\lambda_2+\lambda_1}$ and it holds (up to an arbitrarily small $\eps$-perturbation and for an $u^0$-dependent constant $C$)
\begin{align}
\label{estimate-for-inverse-iteration}
\inf_{\phase_n \in [-\pi,\pi)} \|  \exp(\ci \phase_n) \, u^{n}- u \|_{\1} \le C \, |\mu_1 |^n \le C \hspace{2pt}\, |\tfrac{\lambda_1}{\lambda_2} |^{n}.
\end{align}
To confirm the estimate, we computed the largest eigenvalues in magnitude $\mu_i$ of the weighted eigenvalue problem (WEVP) given by \eqref{weighted-evp1}, where the five largest ones are shown in Figure \ref{plot_cgs_rate} (right). Together with the previously computed values for $\lambda_1$ and $\lambda_2$, we find that
$$ \mu_{1} \approx 0.99726 \,\, < \,\, \tfrac{\lm_1}{\lm_2}  \approx 0.99729,$$ 
as predicted by Lemma \ref{lemma-weighted-eigenvalueproblem}. The two values are very close to each other with a  difference of merely $| \mu_{1}-\tfrac{\lm_1}{\lm_2}| \approx 3\cdot10^{-5}$. Note that both values are still affected by discretization errors and it is possible that they actually coincide analytically. We finally compare these rates according to \eqref{estimate-for-inverse-iteration} with the numerically observed contraction rates $r(n)$ (for $\tau_n=1$) given by
\begin{align*}
r(n):=  
\frac{\inf\limits_{\phase_{n+1} \in [-\pi,\pi)} \| \exp(\ci \phase_{n+1}) \,  u^{n+1} - u \|_{\1}}{\inf\limits_{\phase_{n} \in [-\pi,\pi)} \| \exp(\ci \phase_{n}) \,  u^{n} - u \|_{\1}}.
\end{align*}
The corresponding results are depicted in Figure \ref{plot_cgs_rate} (left), where we find that $r(n)$ approaches asymptotically a value of around $0.9916$, which is below $|\mu_1|$ and therefore consistent with \eqref{estimate-for-inverse-iteration}. However, we believe that there are initial values $u^0$ for which $|\mu_1|$ and $r(n)$ are asymptotically matching since $|\mu_1|$ can be expected to coincide with the worst-case contraction rate.
In conclusion, the numerical experiments validate the findings of Theorem \ref{linear_rate_theorem}.

\begin{figure}[h]
	\centering
		\begin{subfigure}[h]{0.5\textwidth}
		 \centering
\begin{tikzpicture}
  \begin{axis}[
       legend style={fill=gray!20, font=\large,
        nodes={scale=0.6, transform shape}},
        height = 0.62\textwidth,
        width = 1.1\textwidth,
        xmax   = 1843,  
        xmin   = 1, %
        ymax = 1.05, 
        ymin = 0.95,
        xtick={400,800,1200,1600},
        yticklabel style={/pgf/number format/fixed, /pgf/number format/precision=6},
        ytick={0.9600,1.0000,1.040},
        xlabel=iteration $n$,
        ylabel=contraction rate $r(n)$,    
        xlabel style={font=\small}, 
        ylabel style={font=\small} 
  ]
    \addplot [ thick, color=magenta] table {convergence_rate_latex_file_1.txt};
    \draw[thick,dotted,color=black] (axis cs:0,1) -- (axis cs:1843,1);
     \addplot[thick, color=cyan] coordinates {(1,0.997291194) (2000,0.997291194)};
      \addplot[ color=blue] coordinates {(1,0.997260009) (2000,0.997260009)};
   \legend{
$r(n)$,
$\frac{\lm_1}{\lm_2}= 0.997291194$,
$|\mu_1|=0.997260009$,
location=northeast,
color=[0 0 0]
}
  \end{axis}
\end{tikzpicture}
		\end{subfigure}
		\hfill
		\begin{subfigure}[t]{0.4\textwidth}
		\vspace{-60pt}
	\begin{tabular}{cc}
			\toprule
			\textbf{$ i$} & {Spectrum of WEVP: $\mu_i$}    \\
			\midrule
			\cellcolor[gray]{.8} 1 & \cellcolor[gray]{.8} 0.997260009  \\
			2 & 0.994321489 \\
			3 & 0.987617747 \\
			4 & 0.941951190 \\
			5 & 0.891180620  \\
			\bottomrule
		\end{tabular}
		\end{subfigure} 
	\caption{Left: Comparison of {\color{magenta}contraction rates} for the inverse iteration (i.e. iteration \eqref{method} with $\tau_n=1$), with {\color{blue}$|\mu_1|$} and the {\color{cyan}first spectral gap} of $E^{\prime\prime}(u)\vert_{\tangentspace{u}}$. The results are consistent with \eqref{estimate-for-inverse-iteration}. Right: Five largest eigenvalues in magnitude of the weighted eigenvalue problem \eqref{weighted-evp1}.  }
	\label{plot_cgs_rate}
\end{figure}




{\bf Acknowledgment.} We would like to thank Zixu Feng very much for the insightful comments on the manuscript and in particular the ideas that helped us to significantly improve the lower eigenvalue bound in Lemma \ref{lemma-weighted-eigenvalueproblem}.

\def\cprime{$'$}

\begin{appendix}
\section{Characterization of the spectrum}
\label{appendix-A}

In this appendix, we show that the spectral radius $\rho(\,\tilde{\phi}_{\tau}^{\prime}(u)\,)$ is indeed given by the maximum eigenvalue in magnitude of $\tilde{\phi}_{\tau}^{\prime}(u)$. In particular, we show that the spectrum of $\tilde{\phi}_{\tau}^{\prime}(u)$ can only contain zero and the eigenvalues of  $\tilde{\phi}_{\tau}^{\prime}(u)$. For this, we show that $\tilde{\phi}_{\tau}^{\prime}(u)-(1-\tau)I : H^1_0(\D) \rightarrow H^1_0(\D)$ is a compact operator (where $I$ denotes the identity on $H^1_0(\D)$). Once we have that $\tilde{\phi}_{\tau}^{\prime}(u)-(1-\tau)I$ is compact, we know that the spectrum of $\tilde{\phi}_{\tau}^{\prime}(u)-(1-\tau)I$ only contains $0$ and the eigenvalues of the operator. Since $(1-\tau)$ is just a shift of the spectrum, we conclude that also the spectrum of $\tilde{\phi}_{\tau}^{\prime}(u)$ can only consist of eigenvalues (and zero).


Hence, it remains to verify the compactness of $\tilde{\phi}_{\tau}^{\prime}(u)-(1-\tau)I$, which is done in the following lemma.
\begin{lemma}\label{lemma:tildephi-compact}
Let $\tilde{\phi}_{\tau}^{\prime}(u)$ be given by \eqref{tilde-phi-derivative-2} for a ground state $u \in \mathbb{S}$. Then, for any $v \in H^1_0(\D)$ it holds $\tilde{\phi}_{\tau}^{\prime}(u) v - (1-\tau)v \in H^1_0(\D)$ and, for some constant $C$ that depends on $u$, $\lambda$, $\tau$ and the data functions (but not on $v$), it holds 
\begin{align*}
\| \tilde{\phi}_{\tau}^{\prime}(u) v - (1-\tau)v\|_{H^1(\D)} \le C \| v\|_{L^4(\D)}.
\end{align*}
With this, $\tilde{\phi}_{\tau}^{\prime}(u) -(1-\tau)I: H^1_0(\D) \rightarrow H^1_0(\D)$ is a compact operator and its spectrum is given by $0$ and the eigenvalues $\tilde{\mu}_i-(1-\tau)$, where $\tilde{\mu}_i$ is given by \eqref{ev-problem-tilde-phi-prime}.
\end{lemma}
\begin{proof}
First, recall that
\begin{align*}
\tilde{\phi}_{\tau}^{\prime}(u) v = \psi_{\tau}'(u)\, v - u \, (\psi_{\tau}'(u)v , u)_{\0} + \ci \, u\, \im(\overline{ \int_{\D} \psi_{\tau}'(u)v \, \overline{u} \dx }),
 \end{align*}
 where 
 \begin{align*}
 \psi_{\tau}^{\prime}(u) v  =(1-\tau)v - 2\, \tau \, \langle \mathcal{I} ( \hspace{1pt} v -  \tfrac{\beta}{\lambda} \, \re(u \overline{v}) u )\hspace{1pt} , u \rangle \, u +  \tau \, \lambda \,  \Acalinv{u} \hspace{1pt} (v - 2 \tfrac{\beta}{\lambda} \, \re(u \overline{v}) u \hspace{1pt} ).
\end{align*} 
Since $\|  \Acalinv{u} \hspace{1pt} ( \, \re(u \overline{v}) u \hspace{1pt} ) \|_{H^1(\D)} \le C \| \re(u \overline{v}) u \|_{H^{-1}(\D)} 
\le C \| u \|_{L^4(\D)}^2 \| v \|_{L^4(\D)}$
for any $v\in H^1_0(\D)$, 
we obtain with some constants $C$ and $\tilde{C}$ (depending on $(u,\lambda,\beta,\Omega,V,\tau)$) that  
\begin{eqnarray*}
\| \tilde{\phi}_{\tau}^{\prime}(u) v - (1-\tau)v\|_{H^1(\D)} &\le& 
\| \psi_{\tau}^{\prime}(u) v - (1-\tau)v \|_{H^1(\D)}  + 2 \| u \|_{H^1(\D)} \| \psi_{\tau}^{\prime}(u) v \|_{L^2(\D)} \\
&\le& \| \psi_{\tau}^{\prime}(u) v - (1-\tau)v \|_{H^1(\D)} + 
C\left(  \| v\|_{L^2(\D)} + \| v\|_{L^4(\D)} \right) \\
&\le& \tilde{C} \| v\|_{L^4(\D)}.
\end{eqnarray*}
Now let $(v_n)_{n \in \mathbb{N}}$ be a bounded sequence in $H^1_0(\D)$, then  the compact embedding of $H^1(\D)$ into $L^4(\D)$ shows that there exists a subsequence (still denoted by $v_n$) that converges weakly in $H^1(\D)$ and strongly in $L^p(\D)$, with $p<6$, to some limit $v^{\ast} \in H^1_0(\D)$. However, the estimate then shows that
\begin{align*}
\| \tilde{\phi}_{\tau}^{\prime}(u) (v_n-v^{\ast}) - (1-\tau)(v_n -v^{\ast}) \|_{H^1(\D)}
\le  \tilde{C} \| v_n -v^{\ast} \|_{L^4(\D)} \rightarrow 0,
\end{align*}
i.e. $\tilde{\phi}_{\tau}^{\prime}(u)v_n- (1-\tau)v_n \rightarrow \tilde{\phi}_{\tau}^{\prime}(u)v^{\ast}- (1-\tau)v^{\ast}$ strongly in $H^1(\D)$.
Hence, any bounded sequence in $H^1_0(\D)$ has a subsequence such that the image of that subsequence under $\tilde{\phi}_{\tau}^{\prime}(u) - (1-\tau)I$ converges strongly in $H^1(\D)$. We conclude that $ \tilde{\phi}_{\tau}^{\prime}(u) - (1-\tau)I : H^1_0(\D) \rightarrow H^1_0(\D)$ is a compact operator. Consequently, the spectrum is given by zero and the eigenvalues of $\tilde{\phi}_{\tau}^{\prime}(u) - (1-\tau)I$. 
\end{proof}

\section{Efficient computation of optimal $\tau$-values}
\label{appendix-B}

We consider the computation of the optimal value $\tau_n$ for the parameter minimization problem \eqref{tau-n-optimal}. For that, assume that a previous approximation $u^n \in H^1_0(\D)$ is given together with a descent direction $d^n= -P_{u^n,X} (\,\nabla_X E(\,u^n\,) \,)\in \tangentspace{u^n}$. We want to find $\tau_n$ such that
\begin{align*}
\tau_n = \underset{\tau \in (0,2)}{\mbox{\normalfont arg\,min}} \, g(\tau),
\qquad \mbox{for } g(\tau) := E \big( \frac{\hspace{-19pt}u^{n} + \tau d^n}{\hspace{3pt}\| u^{n} + \tau d^n \|_{L^2(\D)} } \big).
\end{align*}
The restriction of $\tau$ to the interval $(0,2)$ is motivated by Lemma \ref{energy_reduction_lemma} which shows that the scheme is diverging for $\tau\ge2$. In order to identify the structure of the rational function $g(\tau)$, we express the energy as $E(v) = \tfrac{1}{2} (v,v)_{\sR} +  \tfrac{\beta}{4} \| v \|_{L^4(\D)}^4$ and obtain
\begin{eqnarray*}
 g(\tau) &=& 
 \frac{1}{2} \frac{ ( u^{n} +  \tau \, d^{n} , u^{n} +  \tau \, d^{n} )_{\sR} }{ ( u^{n} +  \tau \, d^{n} , u^{n} +  \tau \, d^{n} )_{L^2(\D)} } + \frac{\beta}{4}  \frac{ \| u^{n} +  \tau \, d^{n} \|_{L^4(\D)}^4 }{ \| u^{n} +  \tau \, d^{n} \|_{L^2(\D)}^4}. 
\end{eqnarray*} 
For the $L^4$-term, we have
\begin{eqnarray*}
\lefteqn{ \| u^{n} +  \tau \, d^{n} \|_{L^4(\D)}^4
\,\,\,=\,\,\, \int_{\D} ( \re [(u^{n} +  \tau \, d^{n} ) \overline{(u^{n} +  \tau \, d^{n} )}] )^2 \dx } \\
&=& \int_{\D}  |u^{n}|^4 + 4 \, \tau\, \re ( u^{n} \overline{d^{n}} ) |u^{n}|^2 +2 \, \tau^2 |d^{n}|^2 |u^{n}|^2 \dx \\
&\enspace&\quad+ \int_{\D}  4 \, \tau^2\, \re ( u^{n} \overline{d^{n}} )^2 + 4 \, \tau^3 |d^{n}|^2 \, \re ( u^{n} \overline{d^{n}} ) + \tau^4 |d^{n}|^4  \dx.
\end{eqnarray*}
By denoting
\begin{align*}
\xi_{0}&:=  \int_{\D}  |u^{n}|^4  \dx, 
\quad
\xi_{1}:= \int_{\D}  \re ( u^{n} \overline{d^{n}} ) |u^{n}|^2 \dx,
\quad
\xi_{2}:= \int_{\D}  |d^{n}|^2 |u^{n}|^2  + 2 [\re ( u^{n} \overline{d^{n}} )]^2 \dx, \\
\xi_{3}&:= \int_{\D} |d^{n}|^2 \, \re ( u^{n} \overline{d^{n}} ) \dx,
\quad
\xi_{4}:= \int_{\D}  |d^{n}|^4  \dx,
\end{align*}
we can write
\begin{align*}
\| u^{n} +  \tau \, d^{n} \|_{L^4(\D)}^4 = \xi_0 + 4 \xi_1 \tau  + 2 \xi_2 \tau^2 + 4 \xi_3 \tau^3 + \xi_4 \tau^4.
\end{align*}
Further introducing
\begin{align*}
\eta_1 := ( u^{n} , d^{n} )_{L^2(\D)} \quad \mbox{and} \quad \eta_2 := ( d^{n} , d^{n} )_{L^2(\D)}
\end{align*}
as well as
\begin{align*}
\zeta_0 :=( u^{n} ,u^{n} )_{\sR} , \qquad \zeta_1 :=( u^{n} ,d^{n} )_{\sR}  \qquad \mbox{and} \quad \zeta_2 :=( d^{n} ,d^{n} )_{\sR} 
\end{align*}
we get
\begin{eqnarray*}
\lefteqn{  g(\tau) \,\,\,=\,\,\, 
 \frac{1}{2} \frac{ ( u^{n} +  \tau \, d^{n} , u^{n} +  \tau \, d^{n} )_{\sR} }{ ( u^{n} +  \tau \, d^{n} , u^{n} +  \tau \, d^{n} )_{L^2(\D)} } + \frac{\beta}{4}  \frac{ \| u^{n}  \,\,+\,\,  \tau \, d^{n} \|_{L^4(\D)}^4 }{ \| u^{n} +  \tau \, d^{n} \|_{L^2(\D)}^4} } \\
 %
%
&=& \frac{  \zeta_0 + 2\zeta_1\,\tau + \zeta_2 \, \tau^2 }{ 2 + 4\, \eta_1 \, \tau + 2 \, \eta_2 \, \tau^2   } 
 \,\,+\,\,  \beta  \frac{ \xi_0 + 4 \xi_1 \tau  + 2 \xi_2 \tau^2 + 4 \xi_3 \tau^3 + \xi_4 \tau^4 }{ 4 + 16 \,\eta_1 \, \tau + (8 \,\eta_2 + 16 \eta_1^2 ) \, \tau^2 + 16 \,\eta_1 \,\eta_2 \, \tau^3 + 4 \,\eta_2^2 \, \tau^4 }.
\end{eqnarray*}
Once the coefficients $\xi_i$, $\eta_i$ and $\zeta_i$ are computed, the minimization of $g: (0,2) \rightarrow \R_{>0}$ can be cheaply realized with a classical method such as Brent's method or golden section search. In all our experiments, the minimizer of $g$ (up to machine precision) is found within milliseconds. 

It remains to discuss the efficient computation of the coefficients. For this let us consider a finite element discretization of the Riemannian gradient method \eqref{method} where the iterates $u^n$ are computed in a finite dimensional space $V_h \subset H^1_0(\D)$ that is spanned by $N$ (nodal) basis functions $\phi_j$. Note that in our interpretation, there are separate basis functions to express the real part and the imaginary part of $u^n$, that is, half of the basis functions are purely real and the other half is purely imaginary (typically of the form $\psi_k$ and $\ci \psi_k$ for a real-valued shape function set $\{ \psi_k \}_{1\le k \le N/2}$). 
For brevity, we still denote the approximations in the discrete setting by $u^n$ to avoid a new notation. 

Let $u^n \in V_h$ be given. We want to determine $u^{n+1}\in V_h$ according to \eqref{method}. For that, we first need to compute $q^n := \Acalinv{u^n}u^n \in V_h$ to obtain the descent direction $d^n=- u^n + \gamma^n q^n$. Recalling the representation \eqref{energy_function-clone}, we see that $q^n := \Acalinv{u^n}u^n \in V_h$ is given as the solution to the (linear) discrete problem
\begin{align}
\label{appendix-qn-problem}
(q^n,v)_{\sR} + \beta \, ( |u^n|^2 q^n , v )_{\0} 
=  ( u^n , v )_{\0} 
\qquad \mbox{for all } v\in V_h.
\end{align}
If $\mathbf{q}^n \in \mathbb{R}^{N}$ denotes the corresponding coefficient vector with $q^n = \sum\limits_{j=1}^N \mathbf{q}^n_j \phi_j$ and accordingly $\mathbf{u}^n \in \mathbb{R}^{N}$ for $u^n$, then \eqref{appendix-qn-problem} is equivalent to the linear system seeking $\mathbf{q}^n \in \mathbb{R}^{N}$ with
\begin{align*}
(\mathbf{S} + \mathbf{M}_{u^n}) \hspace{1pt}\mathbf{q}^n = \mathbf{M} \hspace{1pt}\mathbf{u}^n,
\end{align*}
where $\mathbf{S} \in \mathbb{R}^{N \times N}$ is the system matrix and  $\mathbf{M} \in \mathbb{R}^{N \times N}$ the mass matrix with respective entries
\begin{align*}
\mathbf{S}_{kj} = ( \phi_j , \phi_k )_{\sR} 
\qquad
\mbox{and}
\qquad
\mathbf{M}_{kj} = ( \phi_j , \phi_k )_{\0}  
\end{align*}
and where $\mathbf{M}_{u^n} \in \mathbb{R}^{N \times N}$ denotes the nonlinear contribution with entries
\begin{align*}
\mathbf{M}_{u^n}\vert_{kj} = \re \int_{\D} |u^n|^2 \phi_j \, \overline{\phi_k}  \dx
\end{align*}
which has to be updated in each iteration. The update can be costly, depending on the mesh size of the space $V_h$ and the polynomial order of the finite elements. For uniform steps $\tau$, the matrix $\mathbf{M}_{u^n}$ is reassembled after each iteration in a straightforward way through a grid walk. However, for adaptive step sizes $\tau_n$ a different strategy should be used. To see this, let us express the coefficients  $\xi_i$, $\eta_i$ and $\zeta_i$ (needed to construct $g(\tau)$) by matrix vector multiplications. 
First, we directly have 
\begin{align*}
\eta_1 = \langle \mathbf{M} \mathbf{u}^n , \mathbf{d}^n \rangle, \quad
\eta_2 = \langle \mathbf{M} \mathbf{d}^n , \mathbf{d}^n \rangle, \quad
\zeta_0 = \langle \mathbf{S} \mathbf{u}^n , \mathbf{u}^n \rangle, \quad
\zeta_1 = \langle \mathbf{S} \mathbf{u}^n , \mathbf{d}^n \rangle, \quad
\zeta_2 = \langle \mathbf{S} \mathbf{d}^n , \mathbf{d}^n \rangle.
\end{align*}
In a similar way, the factor $\gamma^n =( u^n , q^n )_{\0}^{-1}$ can be expressed through the mass matrix. Since $\mathbf{M}$ and $\mathbf{S}$ are preassembled, the cost are negligible. However, the situation changes when considering the coefficients $\xi_j$. They require the assembly of two new matrices $\boldsymbol{\Xi}_{u^nd^n} \in \mathbb{R}^{N \times N}$ and  $\boldsymbol{\Xi}_{d^nd^n} \in \mathbb{R}^{N \times N}$ with entries
\begin{align*}
\boldsymbol{\Xi}_{u^nd^n} \vert_{kj} = \re \int_{\D} \re ( u^{n} \overline{d^{n}} )\, \phi_j \, \overline{\phi_k}  \dx
\qquad
\mbox{and}
\qquad
\boldsymbol{\Xi}_{d^nd^n} \vert_{kj} = \re \int_{\D} |d^{n}|^2\, \phi_j \, \overline{\phi_k}  \dx.
\end{align*}
Once the matrices are available, the coefficients can be expressed as
\begin{align*}
\xi_0 &= \langle \mathbf{M}_{u^n} \mathbf{u}^n , \mathbf{u}^n \rangle, \quad
\xi_1 = \langle \boldsymbol{\Xi}_{u^nd^n} \mathbf{u}^n , \mathbf{u}^n \rangle, \quad
\xi_2 = \langle \boldsymbol{\Xi}_{d^nd^n} \mathbf{u}^n , \mathbf{u}^n \rangle + 2 \langle \boldsymbol{\Xi}_{u^nd^n} \mathbf{u}^n , \mathbf{d}^n \rangle,\\
\xi_3 &= \langle\boldsymbol{\Xi}_{d^nd^n}\mathbf{u}^n , \mathbf{d}^n \rangle, \quad
\xi_4 = \langle \boldsymbol{\Xi}_{d^nd^n} \mathbf{d}^n , \mathbf{d}^n \rangle.
\end{align*}
The matrices  $\boldsymbol{\Xi}_{u^nd^n}$ and  $\boldsymbol{\Xi}_{d^nd^n}$ can be computed within one single grid walk since they require almost the same evaluations in quadrature points. The computational complexity of computing $\boldsymbol{\Xi}_{u^nd^n}$ and  $\boldsymbol{\Xi}_{d^nd^n}$ is therefore similar to the complexity for assembling $\mathbf{M}_{u^n}$. However, contrary to the case of a uniform step size $\tau$, the adaptive version allows to update $\mathbf{M}_{u^{n+1}}$ from $ \mathbf{M}_{u^n}$, $\boldsymbol{\Xi}_{u^nd^n}$ and  $\boldsymbol{\Xi}_{d^nd^n}$. In fact, for $\tilde{u}^{n+1}:=u^n+ \tau_n d^n$,we obtain
\begin{align*}
 \mathbf{M}_{\tilde{u}^{n+1}} =  \mathbf{M}_{u^n} + 2 \tau_n \, \boldsymbol{\Xi}_{u^nd^n} + \tau_n^2 \, \boldsymbol{\Xi}_{d^nd^n}.
\end{align*}
Consequently, with $u^{n+1} = \tilde{u}^{n+1}/\| \tilde{u}^{n+1} \|_{L^2(\D)}$, we obtain 
$$
 \mathbf{M}_{u^{n+1}} \,\,=\,\, \| \tilde{u}^{n+1} \|_{L^2(\D)}^{-2} \, \mathbf{M}_{\tilde{u}^{n+1}} \,\,=\,\, \langle \mathbf{M} \tilde{\mathbf{u}}^{n+1} , \tilde{\mathbf{u}}^{n+1} \rangle^{-1} \, \mathbf{M}_{\tilde{u}^{n+1}}.
$$
In summary, we find that the gradient method with uniform step size $\tau$ requires the repeated assembly of the matrix $\mathbf{M}_{u^{n}}$, whereas the adaptive version requires the repeated assembly of the matrices $\boldsymbol{\Xi}_{u^nd^n}$ and $\boldsymbol{\Xi}_{d^nd^n}$. The assembly of $\boldsymbol{\Xi}_{u^nd^n}$ and $\boldsymbol{\Xi}_{d^nd^n}$ can be done simultaneously, due to their very similar structure and almost identical function evaluations. Consequently, the computational cost for one iteration with the adaptive and the non-adaptive version of the gradient method \eqref{method} are almost the same, which is why the adaptive version is preferable due to the lower iteration numbers.

\end{appendix}


\begin{thebibliography}{10}

\bibitem{AHYY24}
Y.~Ai, P.~Henning, M.~Yadav, and S.~Yuan.
\newblock Riemannian conjugate {S}obolev gradients and their application to
  compute ground states of {BEC}s.
\newblock ArXiv e-print 2409.17302, 2024.

\bibitem{AHP21NumMath}
R.~Altmann, P.~Henning, and D.~Peterseim.
\newblock The {$J$}-method for the {G}ross-{P}itaevskii eigenvalue problem.
\newblock {\em Numer. Math.}, 148(3):575--610, 2021.

\bibitem{AltPetSty22}
R.~Altmann, D.~Peterseim, and T.~Stykel.
\newblock Energy-adaptive {R}iemannian optimization on the {S}tiefel manifold.
\newblock {\em ESAIM Math. Model. Numer. Anal.}, 56(5):1629--1653, 2022.

\bibitem{APS23Newton}
R.~Altmann, D.~Peterseim, and T.~Stykel.
\newblock Riemannian {N}ewton methods for energy minimization problems of
  {K}ohn-{S}ham type.
\newblock {\em J. Sci. Comput.}, 101(1):Paper No. 6, 25, 2024.

\bibitem{AnD14}
X.~Antoine and R.~Duboscq.
\newblock Robust and efficient preconditioned {K}rylov spectral solvers for
  computing the ground states of fast rotating and strongly interacting
  {B}ose-{E}instein condensates.
\newblock {\em J. Comput. Phys.}, 258:509--523, 2014.

\bibitem{ALT17}
X.~Antoine, A.~Levitt, and Q.~Tang.
\newblock Efficient spectral computation of the stationary states of rotating
  {B}ose-{E}instein condensates by preconditioned nonlinear conjugate gradient
  methods.
\newblock {\em J. Comput. Phys.}, 343:92--109, 2017.

\bibitem{Bao14}
W.~Bao.
\newblock Mathematical models and numerical methods for {B}ose-{E}instein
  condensation.
\newblock {\em Proceedings of the International Congress for Mathematicians
  2014}, 2014.

\bibitem{BaC13b}
W.~Bao and Y.~Cai.
\newblock Mathematical theory and numerical methods for {B}ose-{E}instein
  condensation.
\newblock {\em Kinet. Relat. Models}, 6(1):1--135, 2013.

\bibitem{BaD04}
W.~Bao and Q.~Du.
\newblock Computing the ground state solution of {B}ose-{E}instein condensates
  by a normalized gradient flow.
\newblock {\em SIAM J. Sci. Comput.}, 25(5):1674--1697, 2004.

\bibitem{BWM05}
W.~Bao, H.~Wang, and P.~A. Markowich.
\newblock Ground, symmetric and central vortex states in rotating
  {B}ose-{E}instein condensates.
\newblock {\em Commun. Math. Sci.}, 3(1):57--88, 2005.

\bibitem{Begout22}
P.~B\'{e}gout.
\newblock The dual space of a complex {B}anach space restricted to the field of
  real numbers.
\newblock {\em Adv. Math. Sci. Appl.}, 31(2):241--252, 2022.

\bibitem{Bos24}
S.~N. Bose.
\newblock Plancks {G}esetz und {L}ichtquantenhypothese.
\newblock {\em Zeitschrift f\"ur Physik}, 26(1):178--181, 1924.

\bibitem{Can00}
E.~Canc{\`e}s.
\newblock S{CF} algorithms for {HF} electronic calculations.
\newblock In {\em Mathematical models and methods for ab initio quantum
  chemistry}, volume~74 of {\em Lecture Notes in Chem.}, pages 17--43.
  Springer, Berlin, 2000.

\bibitem{CCM10}
E.~Canc{\`e}s, R.~Chakir, and Y.~Maday.
\newblock Numerical analysis of nonlinear eigenvalue problems.
\newblock {\em J. Sci. Comput.}, 45(1-3):90--117, 2010.

\bibitem{CKL21}
E.~Canc\`es, G.~Kemlin, and A.~Levitt.
\newblock Convergence analysis of direct minimization and self-consistent
  iterations.
\newblock {\em SIAM J. Matrix Anal. Appl.}, 42(1):243--274, 2021.

\bibitem{CaL00}
E.~Canc{\`e}s and C.~Le~Bris.
\newblock Can we outperform the {DIIS} approach for electronic structure
  calculations?
\newblock {\em Int. J. Quantum Chem.}, 79(2):82--90, 2000.

\bibitem{CaL00B}
E.~Canc\`es and C.~Le~Bris.
\newblock On the convergence of {SCF} algorithms for the {H}artree-{F}ock
  equations.
\newblock {\em M2AN Math. Model. Numer. Anal.}, 34(4):749--774, 2000.

\bibitem{CDLX23}
H.~Chen, G.~Dong, W.~Liu, and Z.~Xie.
\newblock Second-order flows for computing the ground states of rotating
  {B}ose-{E}instein condensates.
\newblock {\em J. Comput. Phys.}, 475:Paper No. 111872, 28, 2023.

\bibitem{CGHZ11}
H.~Chen, X.~Gong, L.~He, and A.~Zhou.
\newblock Adaptive finite element approximations for a class of nonlinear
  eigenvalue problems in quantum physics.
\newblock {\em Adv. Appl. Math. Mech.}, 3(4):493--518, 2011.

\bibitem{CLLZ24-discrete}
Z.~Chen, J.~Lu, Y.~Lu, and X.~Zhang.
\newblock Fully discretized {S}obolev gradient flow for the
  {G}ross--{P}itaevskii eigenvalue problem.
\newblock ArXiv e-print (to appear in Math. Comp.) 2403.06028, 2024.

\bibitem{CLLZ24}
Z.~Chen, J.~Lu, Y.~Lu, and X.~Zhang.
\newblock On the convergence of {S}obolev gradient flow for the
  {G}ross-{P}itaevskii eigenvalue problem.
\newblock {\em SIAM J. Numer. Anal.}, 62(2):667--691, 2024.

\bibitem{DaH10}
I.~Danaila and F.~Hecht.
\newblock A finite element method with mesh adaptivity for computing vortex
  states in fast-rotating {B}ose-{E}instein condensates.
\newblock {\em J. Comput. Phys.}, 229(19):6946--6960, 2010.

\bibitem{DaK10}
I.~Danaila and P.~Kazemi.
\newblock A new {S}obolev gradient method for direct minimization of the
  {G}ross-{P}itaevskii energy with rotation.
\newblock {\em SIAM J. Sci. Comput.}, 32(5):2447--2467, 2010.

\bibitem{DaP17}
I.~Danaila and B.~Protas.
\newblock Computation of ground states of the {G}ross-{P}itaevskii functional
  via {R}iemannian optimization.
\newblock {\em SIAM J. Sci. Comput.}, 39(6):B1102--B1129, 2017.

\bibitem{DiC07}
C.~M. Dion and E.~Canc{\`e}s.
\newblock Ground state of the time-independent {G}ross-{P}itaevskii equation.
\newblock {\em Comput. Phys. Comm.}, 177(10):787--798, 2007.

\bibitem{pc22}
C.~D\"oding and P.~Henning.
\newblock Uniform {$L^\infty$}-bounds for energy-conserving higher-order time
  integrators for the {G}ross-{P}itaevskii equation with rotation.
\newblock {\em IMA J. Numer. Anal.}, 44(5):2892--2935, 2024.

\bibitem{Ein24}
A.~Einstein.
\newblock {\em Quantentheorie des einatomigen idealen {G}ases}, pages 261--267.
\newblock Sitzber. Kgl. Preuss. Akad. Wiss., 1924.

\bibitem{EngGianGrub22}
C.~Engstr\"{o}m, S.~Giani, and L.~Grubi\v{s}i\'{c}.
\newblock Higher order composite {DG} approximations of {G}ross-{P}itaevskii
  ground state: benchmark results and experiments.
\newblock {\em J. Comput. Appl. Math.}, 400:Paper No. 113652, 15, 2022.

\bibitem{FaT18}
E.~Faou and T.~J\'{e}z\'{e}quel.
\newblock Convergence of a normalized gradient algorithm for computing ground
  states.
\newblock {\em IMA J. Numer. Anal.}, 38(1):360--376, 2018.

\bibitem{HLP24}
M.~Hauck, Y.~Liang, and D.~Peterseim.
\newblock Positivity preserving finite element method for the
  {G}ross--{P}itaevskii ground state: discrete uniqueness and global
  convergence.
\newblock ArXiv e-print 2405.17090, 2024.

\bibitem{HSW21}
P.~Heid, B.~Stamm, and T.~P. Wihler.
\newblock Gradient flow finite element discretizations with energy-based
  adaptivity for the {G}ross-{P}itaevskii equation.
\newblock {\em J. Comput. Phys.}, 436:Paper No. 110165, 15, 2021.

\bibitem{PH24}
P.~Henning.
\newblock The dependency of spectral gaps on the convergence of the inverse
  iteration for a nonlinear eigenvector problem.
\newblock {\em Mathematical Models and Methods in Applied Sciences},
  33(07):1517--1544, 2023.

\bibitem{HenJar24}
P.~Henning and E.~Jarlebring.
\newblock The {G}ross--{P}itaevskii equation and eigenvector nonlinearities:
  Numerical methods and algorithms.
\newblock to appear in {\it RIAM Rev.}, 2025.

\bibitem{HMP14b}
P.~Henning, A.~M{\aa}lqvist, and D.~Peterseim.
\newblock Two-level discretization techniques for ground state computations of
  {B}ose-{E}instein condensates.
\newblock {\em SIAM J. Numer. Anal.}, 52(4):1525--1550, 2014.

\bibitem{HeP23}
P.~Henning and A.~Persson.
\newblock On optimal convergence rates for discrete minimizers of the
  {G}ross-{P}itaevskii energy in localized orthogonal decomposition spaces.
\newblock {\em Multiscale Model. Simul.}, 21(3):993--1011, 2023.

\bibitem{HeP20}
P.~Henning and D.~Peterseim.
\newblock Sobolev gradient flow for the {G}ross-{P}itaevskii eigenvalue
  problem: global convergence and computational efficiency.
\newblock {\em SIAM J. Numer. Anal.}, 58(3):1744--1772, 2020.

\bibitem{PHMY24}
P.~Henning and M.~Yadav.
\newblock On discrete ground states of rotating {B}ose-{E}instein condensates.
\newblock {\em Math. Comp.}, 94(351):1--32, 2025.

\bibitem{JarKM14}
E.~Jarlebring, S.~Kvaal, and W.~Michiels.
\newblock An inverse iteration method for eigenvalue problems with eigenvector
  nonlinearities.
\newblock {\em SIAM J. Sci. Comput.}, 36(4):A1978--A2001, 2014.

\bibitem{KaE10}
P.~Kazemi and M.~Eckart.
\newblock Minimizing the {G}ross-{P}itaevskii energy functional with the
  {S}obolev gradient -- analytical and numerical results.
\newblock {\em Int. J. Comput. Methods}, 7(3):453--475, 2010.

\bibitem{MAH99}
M.~Matthews, B.~Anderson, P.~Haljan, D.~Hall, C.~Wieman, and E.~Cornell.
\newblock Vortices in a {B}ose-{E}instein condensate.
\newblock {\em Physical Review Letters}, 83(13):2498--2501, 1999.

\bibitem{Neu97}
J.~W. Neuberger.
\newblock {\em Sobolev gradients and differential equations}, volume 1670 of
  {\em Lecture Notes in Mathematics}.
\newblock Springer-Verlag, Berlin, 1997.

\bibitem{NoPr23}
A.~Novruzi and B.~Protas.
\newblock An accelerated {S}obolev gradient method for unconstrained
  optimization problems based on variable inner products.
\newblock {\em J. Comput. Appl. Math.}, 420:Paper No. 114833, 18, 2023.

\bibitem{PiS03}
L.~Pitaevskii and S.~Stringari.
\newblock {\em Bose-{E}instein condensation}, volume 116 of {\em International
  Series of Monographs on Physics}.
\newblock The Clarendon Press, Oxford University Press, Oxford, 2003.

\bibitem{Shi81}
M.~H. Shih.
\newblock A further generalization of the {O}strowski theorem in {B}anach
  spaces.
\newblock {\em Proc. Japan Acad. Ser. A Math. Sci.}, 57(3):168--170, 1981.

\bibitem{WWB17}
X.~Wu, Z.~Wen, and W.~Bao.
\newblock A regularized {N}ewton method for computing ground states of
  {B}ose-{E}instein condensates.
\newblock {\em J. Sci. Comput.}, 73(1):303--329, 2017.

\bibitem{XXXY21}
F.~Xu, H.~Xie, M.~Xie, and M.~Yue.
\newblock A multigrid method for the ground state solution of {B}ose-{E}instein
  condensates based on {N}ewton iteration.
\newblock {\em BIT}, 61(2):645--663, 2021.

\bibitem{Zhang2022}
Z.~Zhang.
\newblock Exponential convergence of {S}obolev gradient descent for a class of
  nonlinear eigenproblems.
\newblock {\em Commun. Math. Sci.}, 20(2):377--403, 2022.

\end{thebibliography}
\end{document}